\documentclass[11pt]{amsart}

\usepackage[ansinew]{inputenc}
\usepackage[all]{xy}
\SelectTips{cm}{}
\usepackage[pagebackref,%colorlinks,linkcolor=black,citecolor=black%
  ,pdfborder={0 0 0}
  ,urlcolor=black,hypertexnames=false]{hyperref}
\hypersetup{pdfauthor={Caterina Campagnolo, Francesco Fournier-Facio, Yash Lodha and Marco Moraschini}
  ,pdftitle=Displacements techniques in bounded cohomology}

\usepackage{amsfonts}
\usepackage{amssymb, amsmath,amsthm, mathtools}
\usepackage{tabularx, hyperref}
\usepackage{enumerate}
\usepackage{mathrsfs}
\usepackage{todonotes}
\usepackage{bbm}

\usepackage{tikz}
\usepackage{tikz-cd}

\newtheorem{lemma}{Lemma}[section]
\newtheorem{thm}[lemma]{Theorem}
\newtheorem*{thm*}{Theorem}
\newtheorem{prop}[lemma]{Proposition}
\newtheorem*{prop*}{Proposition}
\newtheorem*{claim*}{Claim}
\newtheorem{cor}[lemma]{Corollary}

\theoremstyle{definition}
\newtheorem{defi}[lemma]{Definition}

\newtheorem*{quest*}{Question}
\newtheorem{probl}[lemma]{Problem}
\newtheorem{example}[lemma]{Example}

\newtheorem{rem}[lemma]{Remark}

\theoremstyle{definition}

\makeatletter
\newcommand\norm{\bBigg@{0.8}}

\makeatother
 
 \newcommand{\indnorm}[2][flex]{\csname #1l\endcsname\|#2%
                                 \csname #1r\endcsname\|\mathclose{}}
                                  \newcommand{\indnorml}[4][flex]{\csname #1l\endcsname\|#2%
                                 \csname #1r\endcsname\|_{#3}^{#4}\mathclose{}}

\newcommand{\ifsv}[2][norm]{\!\csname #1l\endcsname\bracevert\!#2\!%
                            \csname #1r\endcsname\bracevert\!}

\DeclareMathOperator{\comp}{comp}

\DeclareMathOperator{\id}{id}

\DeclareMathOperator{\GL}{GL}

\DeclareMathOperator{\Xsep}{\mathfrak{X}^{\textup{sep}}}

\DeclareMathOperator{\monod}{c. c. \Z \ c.}

\DeclareMathOperator{\vol}{\textup{vol}}
\DeclareMathOperator{\homeo}{\textup{Homeo}_c}

\DeclareMathOperator{\diffvol}{\textup{Diff}_{c, \vol}}

\DeclareMathOperator{\supp}{supp}

\DeclareMathOperator{\Wr}{\textup{Wr}}

\newcommand{\N}{\ensuremath {\mathbb{N}}}
\newcommand{\R} {\ensuremath {\mathbb{R}}}

\newcommand{\Z} {\ensuremath {\mathbb{Z}}}

\renewcommand{\rho}{\varrho}
\def\phi{\varphi}

% properties
% (work in equation environment)

% comment commands
% comments
\usepackage{color}
\usepackage{pdfcolmk}

% 2do! undef!
\long\def\forget#1{}

% for consistency:

\def\emptyset{\varnothing}

% fix MSC 2020
\makeatletter
\@namedef{subjclassname@2020}{%
  \textup{2020} Mathematics Subject Classification}
\makeatother

%%%%%%%%%%%%%%%%%%%%%%%%%%%%%%%%%%%%%%%%%%%%%%%%%%%%%%%%%%%%%%%%%
\begin{document}

\title[]{Displacement techniques in bounded cohomology}

\author[]{Caterina Campagnolo}
\address{Departamento de Matem\'aticas, Universidad Aut\'onoma de Madrid, Madrid, Spain}
\email{caterina.campagnolo@uam.es}

\author[]{Francesco Fournier-Facio}
\address{Department of Pure Mathematics and Mathematical Statistics, University of Cambridge, UK}
\email{ff373@cam.ac.uk}

\author[]{Yash Lodha}
\address{Department of Mathematics, University of Hawaii, USA}
\email{yashlodha763@gmail.com}

\author[]{Marco Moraschini}
\address{Dipartimento di Matematica, Universit{\`a} di Bologna, 40126 Bologna, Italy}
\email{marco.moraschini2@unibo.it}

\thanks{}

\keywords{}
%\subjclass[2020]{}
%55N10          Singular theory
%18G90          Other co(ho)mological theories
\date{\today.\ \copyright{ C.~Campagnolo, F.~Fournier-Facio, Y.~Lodha and M.~Moraschini}.
}

\begin{abstract}
Several algebraic criteria, reflecting displacement properties of transformation groups, have been used in the past years to prove vanishing of bounded cohomology and stable commutator length.\
Recently, the authors introduced the property of \emph{commuting cyclic conjugates}, a new displacement technique that is widely applicable and provides vanishing of the bounded cohomology in all positive degrees and all dual separable coefficients.\ In this note we consider the most recent along with the by now classical displacement techniques and we study implications among them as well as counterexamples.
\end{abstract}
\maketitle

%\setcounter{tocdepth}{1}
%\tableofcontents

%\fffcomm{Do we need an index? The paper is short enough I think}

%%%%%%%%%%%%%%%%%%%%%%%%%%%%%%%%%%%%%%%%%%%%%%%%%%%%%%%%%%%
\section{Introduction}
 
The study of the vanishing of \emph{bounded cohomology} $H_b^\bullet(-; -)$ has attracted a lot of interest from the very beginning of the theory, since it is a useful tool in geometric topology \cite{vbc} and rigidity theory \cite{Burger_Monod_2002}, and it characterises certain algebraic properties of groups.\ The first result in this direction is the celebrated characterization of amenable groups by Johnson~\cite{Johnson}: A group~$\Gamma$ is amenable if and only if its bounded cohomology is zero for all \emph{dual} Banach $\Gamma$-modules and all positive degrees.\ Similarly, one can characterise finite groups as the ones whose bounded cohomology is zero in all positive degrees and for \emph{all} Banach modules~\cite{Frigerio:book}.\ The vanishing of the bounded cohomology in low degrees and trivial real coefficients (i.e.\ endowed with the trivial action) has already strong algebraic consequences.\ For instance, via Bavard Duality~\cite{bavard} the \emph{stable commutator length} of a group $\Gamma$ is zero if $H_b^2(\Gamma; \R) = 0$.\ Finally, groups with trivial bounded cohomology in all positive degrees can be used to compute explicitly some full bounded cohomology rings~\cite{monod:thompson, fflm2, ccc}.\ This is the case for example of Thompson's group~$F$, whose bounded cohomology is trivial in positive degrees and real coefficients. This fact can be used to compute the bounded cohomology ring of Thompson's group $T$, leading to $H_b^\bullet(T; \R) = \R [x]$, where $x$ denotes the Euler class~\cite{monod:thompson, fflm2}.

Many vanishing results for the (bounded) cohomology available in the literature are based on \emph{displacement techniques}.\ Such approaches for ordinary (co)homology date back to works by Anderson~\cite{anderson} and Fisher~\cite{fisher1960group} in the context of homeomorphism groups, as well as to Mather's proof of the acyclicity of the compactly supported homeomorphisms group of $\R^n$, $H_{k \geq 1}(\homeo(\R^n); \Z) = 0$~\cite{mather1971vanishing}; and to Wagoner's work on classifying spaces in algebraic $K$-theory ~\cite{wagoner}.\ Based on the work of Mather, Baumslag--Dyer--Heller introduced a large class of acyclic groups called \emph{mitotic groups}~\cite{BDH}. Then Varadarajan~\cite{varadarajan} and Berrick~\cite{berrick} independently extended that class to the larger family of \emph{binate groups} (that includes \emph{dissipated groups} such as $\homeo(\R^n)$).\ In the bounded context the first displacement techniques appeared in the work by Matsumoto and Morita about the \emph{bounded acyclicity} of the group $\homeo(\R^n)$~\cite{Matsu-Mor} (recall that a group is \emph{boundedly acyclic} if its bounded cohomology is zero in all positive degrees and trivial real coefficients).\ Later Kotschick introduced the notion of \emph{commuting conjugates}~\cite{kotschick, fflodha}, a displacement property of a group $\Gamma$ that readily implies the vanishing of the stable commutator length of $\Gamma$.\ Kotschick's result has been recently extended by showing that the property of having commuting conjugates actually implies that the second bounded cohomology group with trivial real coefficients is zero~\cite{fflodha}.\ Moreover, approaches close to the one of Matsumoto and Morita also show that binate groups are boundedly acyclic~\cite{Loeh:dim, fflm2}.\ Finally, some new displacement techniques have been recently introduced in the study of $\Xsep$-\emph{boundedly acyclic groups}, i.e.\ groups with zero bounded cohomology in all positive degrees and all \emph{separable} dual modules~\cite{monod:thompson, ccc}.\ These new dynamical properties are called \emph{commuting cyclic conjugates}, or \emph{commuting $\Z$-conjugates} in some special cases~\cite{ccc}.\ A similar dynamical property appears in the work of Monod~\cite{monod:thompson}, which we refer to as having \emph{conjugates in commuting $\Z$-conjugate}.

The goal of this note is to compare all the different notions that have been introduced in the literature so far and explain which kind of vanishing result they provide.\ The largest portion of this paper is devoted to the construction of several examples of groups satisfying a certain combination of properties.\ In order to avoid cluttering, we group the comparison into three diagrams involving properties that are closely related: see Figures \ref{fig:vd-ccc}, \ref{fig:vd-czc} and \ref{fig:vd-binate}.\ The main result of this paper is that each of these diagrams is correct: The inclusions hold, and each of the regions formed by intersections is non-empty.\ The property of being mitotic is a bit of a special case, since it is an algebraic criterion that does not have a clear dynamical counterpart, so it will be treated separately.

\begin{figure}
\centering
\def\svgscale{0.6}
%% Creator: Inkscape 1.3 (0e150ed6c4, 2023-07-21), www.inkscape.org
%% PDF/EPS/PS + LaTeX output extension by Johan Engelen, 2010
%% Accompanies image file '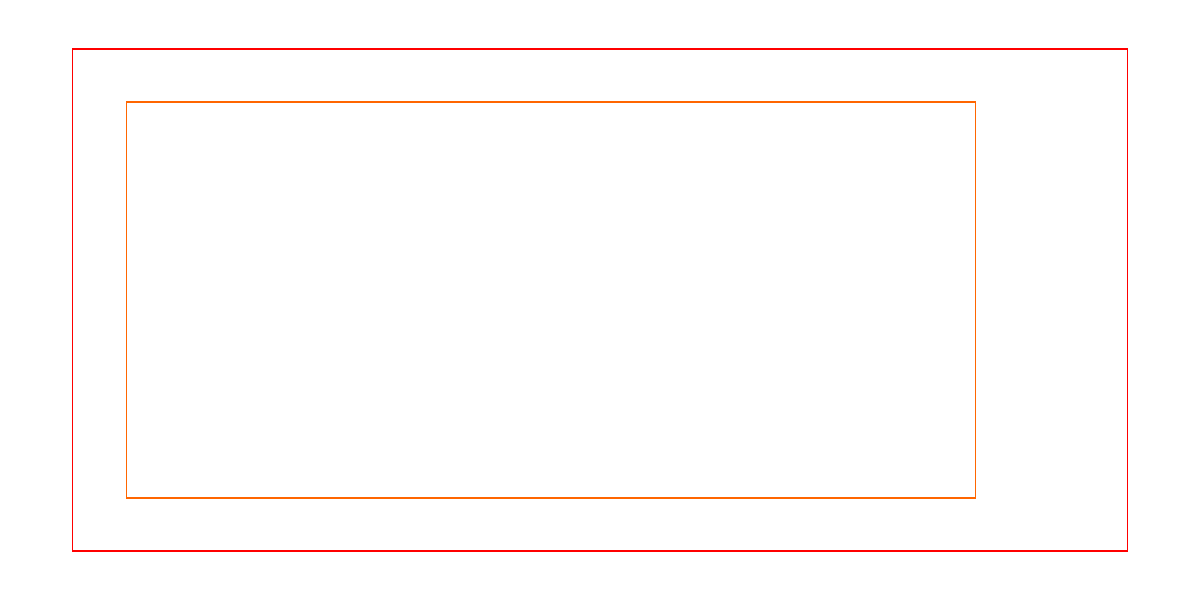' (pdf, eps, ps)
%%
%% To include the image in your LaTeX document, write
%%   \input{<filename>.pdf_tex}
%%  instead of
%%   \includegraphics{<filename>.pdf}
%% To scale the image, write
%%   \def\svgwidth{<desired width>}
%%   \input{<filename>.pdf_tex}
%%  instead of
%%   \includegraphics[width=<desired width>]{<filename>.pdf}
%%
%% Images with a different path to the parent latex file can
%% be accessed with the `import' package (which may need to be
%% installed) using
%%   \usepackage{import}
%% in the preamble, and then including the image with
%%   \import{<path to file>}{<filename>.pdf_tex}
%% Alternatively, one can specify
%%   \graphicspath{{<path to file>/}}
%% 
%% For more information, please see info/svg-inkscape on CTAN:
%%   http://tug.ctan.org/tex-archive/info/svg-inkscape
%%
\begingroup%
  \makeatletter%
  \providecommand\color[2][]{%
    \errmessage{(Inkscape) Color is used for the text in Inkscape, but the package 'color.sty' is not loaded}%
    \renewcommand\color[2][]{}%
  }%
  \providecommand\transparent[1]{%
    \errmessage{(Inkscape) Transparency is used (non-zero) for the text in Inkscape, but the package 'transparent.sty' is not loaded}%
    \renewcommand\transparent[1]{}%
  }%
  \providecommand\rotatebox[2]{#2}%
  \newcommand*\fsize{\dimexpr\f@size pt\relax}%
  \newcommand*\lineheight[1]{\fontsize{\fsize}{#1\fsize}\selectfont}%
  \ifx\svgwidth\undefined%
    \setlength{\unitlength}{576bp}%
    \ifx\svgscale\undefined%
      \relax%
    \else%
      \setlength{\unitlength}{\unitlength * \real{\svgscale}}%
    \fi%
  \else%
    \setlength{\unitlength}{\svgwidth}%
  \fi%
  \global\let\svgwidth\undefined%
  \global\let\svgscale\undefined%
  \makeatother%
  \begin{picture}(1,0.5)%
    \lineheight{1}%
    \setlength\tabcolsep{0pt}%
    \put(0,0){\includegraphics[width=\unitlength,page=1]{vd-c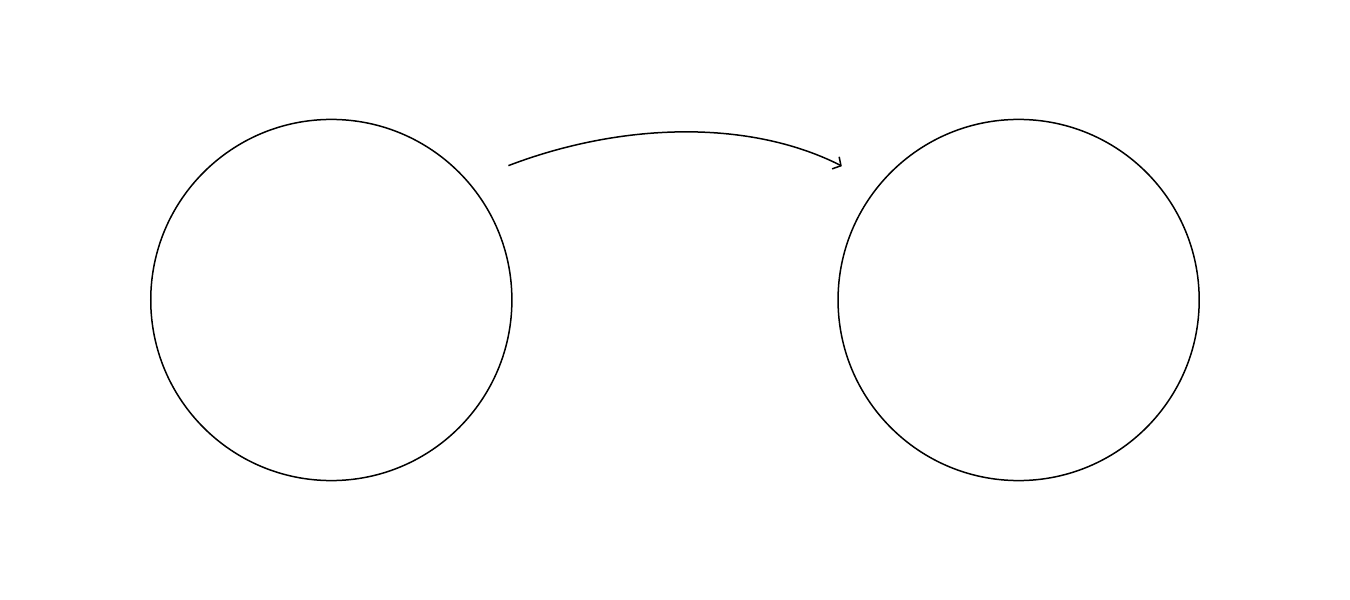}}%
    \put(0.12313004,0.39404196){\color[rgb]{1,0.4,0}\makebox(0,0)[lt]{\lineheight{1.25}\smash{\begin{tabular}[t]{l}c. c. c.\end{tabular}}}}%
    \put(0.07843316,0.43485366){\color[rgb]{1,0,0}\makebox(0,0)[lt]{\lineheight{1.25}\smash{\begin{tabular}[t]{l}c. c.\end{tabular}}}}%
    \put(0,0){\includegraphics[width=\unitlength,page=2]{vd-ccc.pdf}}%
    \put(0.15207397,0.35254685){\color[rgb]{0,0.50196078,0}\makebox(0,0)[lt]{\lineheight{1.25}\smash{\begin{tabular}[t]{l}c. $\Z/n$ c.\end{tabular}}}}%
    \put(0,0){\includegraphics[width=\unitlength,page=3]{vd-ccc.pdf}}%
    \put(0.60779304,0.37269681){\color[rgb]{0,0,1}\makebox(0,0)[lt]{\lineheight{1.25}\smash{\begin{tabular}[t]{l}c. $\Z$ c.\end{tabular}}}}%
  \end{picture}%
\endgroup%

\caption{Commuting conjugates, commuting cyclic conjugates, commuting $\Z$-conjugates and commuting $\Z/n$-conjugates.}
\label{fig:vd-ccc}
\end{figure}

\begin{figure}
\centering
\def\svgscale{0.6}
%% Creator: Inkscape 1.3 (0e150ed6c4, 2023-07-21), www.inkscape.org
%% PDF/EPS/PS + LaTeX output extension by Johan Engelen, 2010
%% Accompanies image file '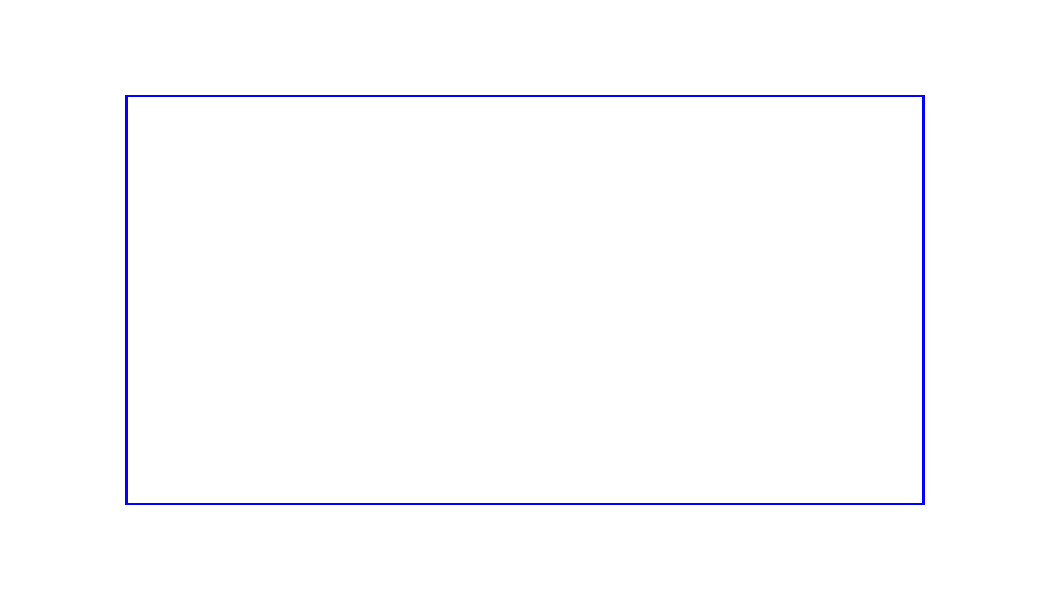' (pdf, eps, ps)
%%
%% To include the image in your LaTeX document, write
%%   \input{<filename>.pdf_tex}
%%  instead of
%%   \includegraphics{<filename>.pdf}
%% To scale the image, write
%%   \def\svgwidth{<desired width>}
%%   \input{<filename>.pdf_tex}
%%  instead of
%%   \includegraphics[width=<desired width>]{<filename>.pdf}
%%
%% Images with a different path to the parent latex file can
%% be accessed with the `import' package (which may need to be
%% installed) using
%%   \usepackage{import}
%% in the preamble, and then including the image with
%%   \import{<path to file>}{<filename>.pdf_tex}
%% Alternatively, one can specify
%%   \graphicspath{{<path to file>/}}
%% 
%% For more information, please see info/svg-inkscape on CTAN:
%%   http://tug.ctan.org/tex-archive/info/svg-inkscape
%%
\begingroup%
  \makeatletter%
  \providecommand\color[2][]{%
    \errmessage{(Inkscape) Color is used for the text in Inkscape, but the package 'color.sty' is not loaded}%
    \renewcommand\color[2][]{}%
  }%
  \providecommand\transparent[1]{%
    \errmessage{(Inkscape) Transparency is used (non-zero) for the text in Inkscape, but the package 'transparent.sty' is not loaded}%
    \renewcommand\transparent[1]{}%
  }%
  \providecommand\rotatebox[2]{#2}%
  \newcommand*\fsize{\dimexpr\f@size pt\relax}%
  \newcommand*\lineheight[1]{\fontsize{\fsize}{#1\fsize}\selectfont}%
  \ifx\svgwidth\undefined%
    \setlength{\unitlength}{504bp}%
    \ifx\svgscale\undefined%
      \relax%
    \else%
      \setlength{\unitlength}{\unitlength * \real{\svgscale}}%
    \fi%
  \else%
    \setlength{\unitlength}{\svgwidth}%
  \fi%
  \global\let\svgwidth\undefined%
  \global\let\svgscale\undefined%
  \makeatother%
  \begin{picture}(1,0.57142857)%
    \lineheight{1}%
    \setlength\tabcolsep{0pt}%
    \put(0,0){\includegraphics[width=\unitlength,page=1]{vd-czc.pdf}}%
    \put(0.14636679,0.44545478){\color[rgb]{0,0,1}\makebox(0,0)[lt]{\lineheight{1.25}\smash{\begin{tabular}[t]{l}c. $\Z$ c.\end{tabular}}}}%
    \put(0,0){\includegraphics[width=\unitlength,page=2]{vd-czc.pdf}}%
    \put(0.2207699,0.3731006){\color[rgb]{1,0,1}\makebox(0,0)[lt]{\lineheight{1.25}\smash{\begin{tabular}[t]{l}c. c. $\Z$ c.\end{tabular}}}}%
    \put(0,0){\includegraphics[width=\unitlength,page=3]{vd-czc.pdf}}%
    \put(0.68782584,0.32880007){\color[rgb]{0.47058824,0.26666667,0.12941176}\makebox(0,0)[lt]{\lineheight{1.25}\smash{\begin{tabular}[t]{l}dissipated\end{tabular}}}}%
  \end{picture}%
\endgroup%

\caption{Commuting $\Z$-conjugates, conjugates in commuting $\Z$-conjugate and dissipated.}
\label{fig:vd-czc}
\end{figure}

\begin{figure}
\centering
\def\svgscale{0.6}
%% Creator: Inkscape 1.3 (0e150ed6c4, 2023-07-21), www.inkscape.org
%% PDF/EPS/PS + LaTeX output extension by Johan Engelen, 2010
%% Accompanies image file '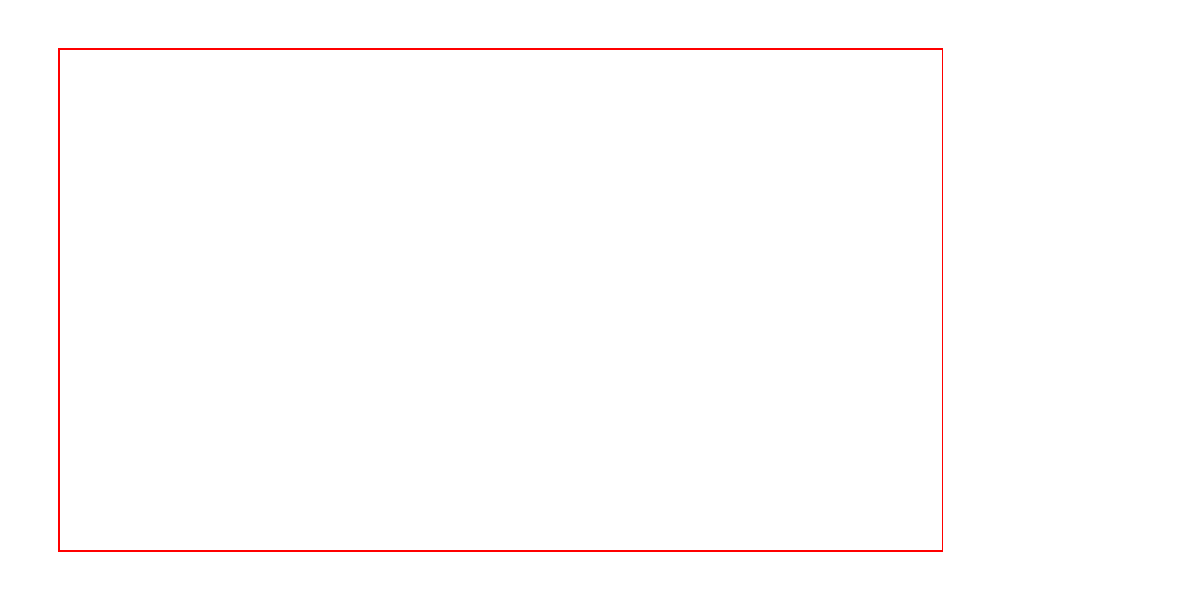' (pdf, eps, ps)
%%
%% To include the image in your LaTeX document, write
%%   \input{<filename>.pdf_tex}
%%  instead of
%%   \includegraphics{<filename>.pdf}
%% To scale the image, write
%%   \def\svgwidth{<desired width>}
%%   \input{<filename>.pdf_tex}
%%  instead of
%%   \includegraphics[width=<desired width>]{<filename>.pdf}
%%
%% Images with a different path to the parent latex file can
%% be accessed with the `import' package (which may need to be
%% installed) using
%%   \usepackage{import}
%% in the preamble, and then including the image with
%%   \import{<path to file>}{<filename>.pdf_tex}
%% Alternatively, one can specify
%%   \graphicspath{{<path to file>/}}
%% 
%% For more information, please see info/svg-inkscape on CTAN:
%%   http://tug.ctan.org/tex-archive/info/svg-inkscape
%%
\begingroup%
  \makeatletter%
  \providecommand\color[2][]{%
    \errmessage{(Inkscape) Color is used for the text in Inkscape, but the package 'color.sty' is not loaded}%
    \renewcommand\color[2][]{}%
  }%
  \providecommand\transparent[1]{%
    \errmessage{(Inkscape) Transparency is used (non-zero) for the text in Inkscape, but the package 'transparent.sty' is not loaded}%
    \renewcommand\transparent[1]{}%
  }%
  \providecommand\rotatebox[2]{#2}%
  \newcommand*\fsize{\dimexpr\f@size pt\relax}%
  \newcommand*\lineheight[1]{\fontsize{\fsize}{#1\fsize}\selectfont}%
  \ifx\svgwidth\undefined%
    \setlength{\unitlength}{576bp}%
    \ifx\svgscale\undefined%
      \relax%
    \else%
      \setlength{\unitlength}{\unitlength * \real{\svgscale}}%
    \fi%
  \else%
    \setlength{\unitlength}{\svgwidth}%
  \fi%
  \global\let\svgwidth\undefined%
  \global\let\svgscale\undefined%
  \makeatother%
  \begin{picture}(1,0.5)%
    \lineheight{1}%
    \setlength\tabcolsep{0pt}%
    \put(0,0){\includegraphics[width=\unitlength,page=1]{vd-binate.pdf}}%
    \put(0.06723723,0.43485366){\color[rgb]{1,0,0}\makebox(0,0)[lt]{\lineheight{1.25}\smash{\begin{tabular}[t]{l}c. c.\end{tabular}}}}%
    \put(0,0){\includegraphics[width=\unitlength,page=2]{vd-binate.pdf}}%
    \put(0.82276543,0.34593186){\color[rgb]{0,0,0.50196078}\makebox(0,0)[lt]{\lineheight{1.25}\smash{\begin{tabular}[t]{l}binate\end{tabular}}}}%
    \put(0,0){\includegraphics[width=\unitlength,page=3]{vd-binate.pdf}}%
    \put(0.58232439,0.27701827){\color[rgb]{0.47058824,0.26666667,0.12941176}\makebox(0,0)[lt]{\lineheight{1.25}\smash{\begin{tabular}[t]{l}dissipated\end{tabular}}}}%
  \end{picture}%
\endgroup%

\caption{Commuting conjugates, binate and dissipated.}
\label{fig:vd-binate}
\end{figure}

\newpage
\
\newpage
\subsection*{Plan of the paper} 
In Section~\ref{sec:def:impl} we recall the main definitions.\ More precisely, in Section~\ref{subsec:bc:scl} we recall the definition of bounded cohomology and stable commutator length.\ Then Section~\ref{subsec:bac} is devoted to the study of boundedly acyclic groups.\ Here we recall the notions of binate, mitotic and dissipated groups (Paragraph~\ref{subsubsec:binate:mitotic:dissipated}), commuting conjugates (Paragraph~\ref{subsubsec:cc:scl}), commuting cyclic conjugates, commuting $\Z/n$-conjugates and commuting $\Z$-conjugates (Paragraph~\ref{subsubsec:ccc}), as well as conjugates in commuting $\Z$-conjugate (Paragraph~\ref{subsubsec:ccZc}).

Section~\ref{sec:counterexample} contains the new results about the relations between the different displacement properties.\ In Section \ref{ss:vd-ccc} we show that Figure \ref{fig:vd-ccc} is correct.\ In particular, we will construct examples of groups with commuting $\Z/2$-conjugates that do not have commuting $\Z$-conjugates (Proposition \ref{prop:ccc:cznc}); however in order to stress how common this behaviour is, we provide more natural examples in Section \ref{ss:GL} (Proposition \ref{prop:GL}; see also Proposition \ref{prop:hall} in Section \ref{ss:vd-binate}).\ In Section \ref{ss:vd-czc} we show that Figure \ref{fig:vd-czc} is correct; and in Section \ref{ss:vd-binate} we show that Figure \ref{fig:vd-binate} is correct. In Section \ref{ss:mitotic} we discuss how mitotic groups fit in the picture.

Section~\ref{sec:problems} contains some open problems about displacement properties and bounded acyclicity with trivial or separable dual coefficients.

\subsection*{Acknowledgements}  
CC acknowledges support by the European Union in the form of a Mar\'ia Zambrano grant.\ FFF was supported by the Herchel Smith Postdoctoral Fellowship Fund.\ YL was partially supported by the NSF CAREER award 2240136.\ MM was partially supported by the INdAM--GNSAGA Project CUP E55F22000270001 and by the ERC ``Definable Algebraic Topology" DAT - Grant Agreement n. 101077154.

CC, FFF and YL thank also the Instituto de Ciencias Matem\'aticas (ICMAT) for its hospitality during the Thematic program on geometric group theory and low-dimensional geometry and topology (May-July 2023), where part of this research was conducted.\ MM also thanks the Forschungsinstitut f\"ur Mathematik (ETH Z\"urich) for its hospitality, where part of this research was conducted.

This work has been supported by the Madrid Government (Comunidad de Madrid - Spain) under the multiannual Agreement with UAM in the line for the Excellence of the University Research Staff in the context of the V PRICIT (Regional Programme of Research and Technological Innovation).\ This work has been funded by the European Union - NextGenerationEU under the National Recovery and Resilience Plan (PNRR) - Mission 4 Education and research - Component 2 From research to business - Investment 1.1 Notice Prin 2022 -  DD N. 104 del 2/2/2022, with title ``Geometry and topology of manifolds", proposal code 2022NMPLT8 - CUP J53D23003820001.

\section{Definitions and inclusions}\label{sec:def:impl}
In this note, $\Gamma$ will always denote an abstract discrete group. Some of the material recalled here is standard, and more detail can be found in e.g. \cite{monod, calegari_scl, Frigerio:book}.

\subsection{Bounded cohomology and stable commutator length}\label{subsec:bc:scl}

Given a group~$\Gamma$, a \emph{normed $\Gamma$-module} $E$ is a normed vector space endowed with a $\Gamma$-action by linear isometries.\ A \emph{Banach $\Gamma$-module} is a normed $\Gamma$-module whose underlying metric is complete.\ In this note the Banach $\Gamma$-module $\R$ is always assumed to carry the trivial $\Gamma$-action.\ Given a normed $\Gamma$-module $E$, its dual $E'$ carries naturally a contragredient $\Gamma$-action: $(g \cdot \lambda)(v) = \lambda(g^{-1}  v)$ for every $\lambda\in E', v\in E, g\in \Gamma$.\ This makes $E'$ into a normed $\Gamma$-module, which is always Banach.\ We call modules arising this way \emph{dual}.

Given a normed $\Gamma$-module $E$, the \emph{bounded cohomology of} $\Gamma$ \emph{with coefficients in} $E$, denoted by $H_b^\bullet(\Gamma; E)$, is defined as the cohomology of the bounded cochain complex $(\ell^\infty(\Gamma^{\bullet+1}; E)^\Gamma; \delta^\bullet)$, where $\delta^\bullet$ denotes the standard simplicial coboundary operator (which sends bounded functions to bounded functions) and $\ell^\infty(\Gamma^{\bullet+1}; E)^\Gamma$ denotes the subcomplex of invariant functions of $\ell^\infty(\Gamma^{\bullet+1}; E)$.

A classical result by Johnson shows that amenable groups can be characterised as those with trivial bounded cohomology in all positive degrees and all dual coefficients~\cite{Johnson}.\ On the other hand, the vanishing of the degree two bounded cohomology with trivial real coefficients is much more common than amenability, and provides useful information about the algebraic structure of the group.\ Indeed, given a group $\Gamma$ we can define the \emph{commutator length} of an element $g \in \Gamma' \coloneqq [\Gamma, \Gamma] \leq \Gamma$, denoted by $\textup{cl}(g)$, as the minimal integer $n \in \N$ such that $g$ can be written as the product of $n$ commutators.\ The \emph{stable commutator length} of an element $g \in \Gamma'$ is then defined as follows~\cite{calegari_scl}:
\[
\textup{scl}(g) = \lim_{n \to +\infty} \frac{\textup{cl}(g^n)}{n}.
\]
Bavard~\cite{bavard} showed that the the stable commutator length is identically zero on $\Gamma'$ if and only if the comparison map in degree two:
\[
\comp^2 \colon H^2_b(\Gamma; \R) \to H^2(\Gamma; \R),
\]
which is the map induced by the inclusion of the bounded cochain complex into the ordinary one, is injective.\ This result witnesses that groups with zero second bounded cohomology with real coefficients have zero stable commutator length.

The vanishing of second bounded cohomology with trivial real coefficients is more robust than the vanishing of stable commutator length. For instance, the fact that the vanishing of second bounded cohomology is stable under taking central extensions, while the vanishing of stable commutator length is not, is of crucial importance in the study of the spectrum of simplicial volume \cite{spectrum1, fflodha, spectrum2}. Moreover, if the second bounded cohomology with trivial real coefficients is zero, one can deduce strong consequences for the rigidity of circle actions \cite{matsumoto, ghys, burger:circle}.

\subsection{Boundedly acyclic groups}\label{subsec:bac}

In this note we are interested in boundedly acyclic groups~\cite{moraschini_raptis_2023, ccc}, i.e.\ groups with zero bounded cohomology in positive degrees:

\begin{defi}[Boundedly acyclic groups]
    A group $\Gamma$ is \emph{boundedly $n$-acyclic} if $H_b^k(\Gamma; \R) = 0$ for all integers $1 \leq k \leq n$.\ It is \emph{boundedly acyclic} if $n = \infty$.

    Similarly, a group $\Gamma$ is $\Xsep$-\emph{boundedly acyclic} if $H^k_b(\Gamma; E) = 0$ for all integers $k > 0$ and all separable dual Banach $\Gamma$-modules $E$.
\end{defi}

Recall that by Johnson's Theorem amenable groups are characterised by the vanishing of bounded cohomology with all dual coefficients. Therefore $\Xsep$-bounded acyclicity is in some sense the strongest vanishing property that one can hope for in non-amenable groups. Still, it has very strong consequences in rigidity theory, see \cite[Section 6]{ccc}.

In the sequel we describe some criteria for bounded acyclicity and $\Xsep$-bounded acyclicity.\ We also show some implications among the different vanishing results.

\subsubsection{Binate, mitotic and dissipated groups}\label{subsubsec:binate:mitotic:dissipated}
A wide class of acyclic groups are the \emph{binate} groups introduced independently by Berrick~\cite{berrick} and Varadarajan~\cite{varadarajan} (who called them \emph{pseudo-mitotic}).\ This class of groups includes groups arising from geometry and topology as well as from combinatorial group theory.\

\begin{defi}[Binate groups]
\label{def:binate}
A group $\Gamma$ is \emph{binate} if for every finitely generated subgroup $H \leq \Gamma$, there exists a homomorphism $f \colon H \to \Gamma$ such that $[H, f(H)] = 1$, and an element $t \in \Gamma$ such that ${}^t f(h) = h \cdot f(h)$ for all $h \in H$.
\end{defi}

\begin{rem}
    Here and in the sequel, we write ${}^t g \coloneqq tgt^{-1}$.
\end{rem}

Two special cases of binate groups have received special attention.\ The first class is the one of \emph{mitotic groups} introduced by Baumslag, Dyer and Heller~\cite{BDH}:

\begin{defi}[Mitotic groups]
\label{def:mitotic}
A group $\Gamma$ is \emph{mitotic} if for every finitely generated subgroup $H \leq \Gamma$ there exist $t_1, t_2 \in \Gamma$ such that $[H, {}^{t_1} H] = 1$ and ${}^{t_2} h = h \cdot {}^{t_1} h$ for all $h \in H$.
\end{defi}

In other words, the homomorphism $f \colon H \to \Gamma$ in the definition of binate groups is required to be a conjugate of the given embedding.

Mitotic groups are very suitable for constructions in combinatorial group theory.\ However, they do not appear naturally in geometry and dynamics (we will discuss this more in Section \ref{ss:mitotic}).\ A much more common class of binate groups, which contains the most natural examples of homeomorphism groups, is the class of \emph{dissipated groups}:

\begin{defi}[Boundedly supported groups]
\label{def:bsupp}
Let $\Gamma$ be a group of bijections of a set $X$, written as a directed union of a direct system of subsets $(X_i)_{i \in I}$, which we call \emph{bounded} sets. The group $\Gamma$ is \emph{boundedly supported} if every element $g \in \Gamma$ satisfies $\supp(g) \subset X_i$ for some $i \in I$.
\end{defi}

\begin{defi}[Dissipated groups]
\label{def:dissipated}

Let $X$ and $(X_i)_{i \in I}$ be as above, and let $\Gamma$ be a boundedly supported group of bijections of $X$.\ A \emph{dissipator} for $\Gamma$ and $X_i$ is an element $t_i \in \Gamma$ such that
\begin{enumerate}
    \item $t_i^p(X_i) \cap X_i = \emptyset$ for all $p \geq 1$;
    \item For every $g \in \Gamma$ supported on $X_i$, the element $f(g)$ defined as
    \[f(g) =
    \begin{cases}
    {}^{t_i^p} g &\text{ on } t_i^p(X_i), \text{ for every } p \geq 1; \\
    \id &\text{ elsewhere}
    \end{cases}\]
    belongs to $\Gamma$.
\end{enumerate}
The group $\Gamma$ is \emph{dissipated} if there exists a dissipator for $\Gamma$ and $X_i$ for every $i \in I$.
\end{defi}

\begin{rem}[Dissipated groups are binate]
\label{rem:diss}
    Dissipated groups are binate~\cite{berrick2002topologist}, as displayed in Figure \ref{fig:diss}.
\end{rem}

\begin{figure}
\centering
\def\svgscale{0.35}
%% Creator: Inkscape 1.3 (0e150ed6c4, 2023-07-21), www.inkscape.org
%% PDF/EPS/PS + LaTeX output extension by Johan Engelen, 2010
%% Accompanies image file '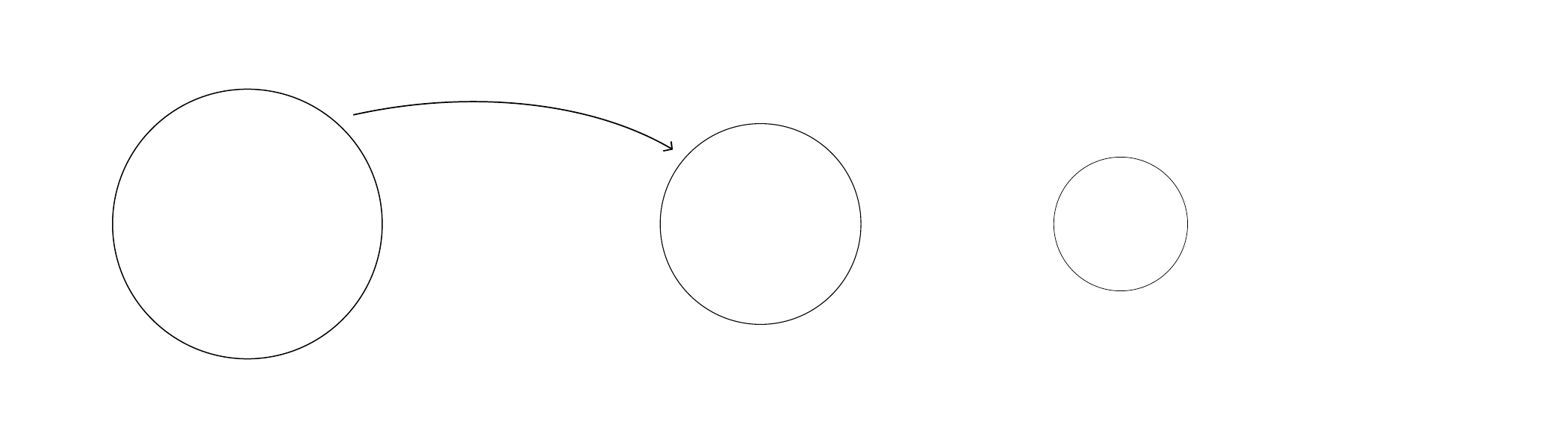' (pdf, eps, ps)
%%
%% To include the image in your LaTeX document, write
%%   \input{<filename>.pdf_tex}
%%  instead of
%%   \includegraphics{<filename>.pdf}
%% To scale the image, write
%%   \def\svgwidth{<desired width>}
%%   \input{<filename>.pdf_tex}
%%  instead of
%%   \includegraphics[width=<desired width>]{<filename>.pdf}
%%
%% Images with a different path to the parent latex file can
%% be accessed with the `import' package (which may need to be
%% installed) using
%%   \usepackage{import}
%% in the preamble, and then including the image with
%%   \import{<path to file>}{<filename>.pdf_tex}
%% Alternatively, one can specify
%%   \graphicspath{{<path to file>/}}
%% 
%% For more information, please see info/svg-inkscape on CTAN:
%%   http://tug.ctan.org/tex-archive/info/svg-inkscape
%%
\begingroup%
  \makeatletter%
  \providecommand\color[2][]{%
    \errmessage{(Inkscape) Color is used for the text in Inkscape, but the package 'color.sty' is not loaded}%
    \renewcommand\color[2][]{}%
  }%
  \providecommand\transparent[1]{%
    \errmessage{(Inkscape) Transparency is used (non-zero) for the text in Inkscape, but the package 'transparent.sty' is not loaded}%
    \renewcommand\transparent[1]{}%
  }%
  \providecommand\rotatebox[2]{#2}%
  \newcommand*\fsize{\dimexpr\f@size pt\relax}%
  \newcommand*\lineheight[1]{\fontsize{\fsize}{#1\fsize}\selectfont}%
  \ifx\svgwidth\undefined%
    \setlength{\unitlength}{1008bp}%
    \ifx\svgscale\undefined%
      \relax%
    \else%
      \setlength{\unitlength}{\unitlength * \real{\svgscale}}%
    \fi%
  \else%
    \setlength{\unitlength}{\svgwidth}%
  \fi%
  \global\let\svgwidth\undefined%
  \global\let\svgscale\undefined%
  \makeatother%
  \begin{picture}(1,0.28571429)%
    \lineheight{1}%
    \setlength\tabcolsep{0pt}%
    \put(0,0){\includegraphics[width=\unitlength,page=1]{diss.pdf}}%
    \put(0.14513931,0.13576107){\color[rgb]{0,0,0}\makebox(0,0)[lt]{\lineheight{1.25}\smash{\begin{tabular}[t]{l}$H$\end{tabular}}}}%
    \put(0.32201554,0.23028819){\color[rgb]{0,0,0}\makebox(0,0)[lt]{\lineheight{1.25}\smash{\begin{tabular}[t]{l}$t$\end{tabular}}}}%
    \put(0,0){\includegraphics[width=\unitlength,page=2]{diss.pdf}}%
    \put(0.60596112,0.20446034){\color[rgb]{0,0,0}\makebox(0,0)[lt]{\lineheight{1.25}\smash{\begin{tabular}[t]{l}$t$\end{tabular}}}}%
    \put(0,0){\includegraphics[width=\unitlength,page=3]{diss.pdf}}%
    \put(0.82539486,0.19073805){\color[rgb]{0,0,0}\makebox(0,0)[lt]{\lineheight{1.25}\smash{\begin{tabular}[t]{l}$t$\end{tabular}}}}%
    \put(0.89960353,0.13427297){\color[rgb]{0,0,0}\makebox(0,0)[lt]{\lineheight{1.25}\smash{\begin{tabular}[t]{l}$\cdots$\end{tabular}}}}%
    \put(0,0){\includegraphics[width=\unitlength,page=4]{diss.pdf}}%
    \put(0.6869273,0.01813105){\color[rgb]{0,0,0}\makebox(0,0)[lt]{\lineheight{1.25}\smash{\begin{tabular}[t]{l}$f(H)$\end{tabular}}}}%
  \end{picture}%
\endgroup%

\caption{\emph{Ad Remark \ref{rem:diss}:}\ $H$ is a finitely generated subgroup of $\Gamma$, supported on the ball on the left, and $t$ is an element all of whose powers displace this set, supported on the rectangle. Note that since $t$ is supported on the rectangle, the translates of the balls must shrink in size. The group $f(H)$ acts diagonally on all translates of the support of $H$.}
\label{fig:diss}
\end{figure}

Dissipated groups typically arise as \emph{groups of compactly supported homeomorphisms} \cite{binate:homeo}.\ Historically, the most important example in the context of bounded cohomology is the group of compactly supported homeomorphisms of $\R^n$:\ Indeed, based on the work by Mather in the case of ordinary (co)homology~\cite{mather1971vanishing}, Matsmumoto and Morita proved that the comparison map $\comp^k \colon H^k_b(\Gamma; \R) \to H^k(\Gamma; \R)$ is injective in all positive degrees~\cite{Matsu-Mor}.\ This was the  first example of a non-amenable boundedly acyclic group.

The same strategy then became available for binate groups: since they are acyclic \cite{berrick}, it is sufficient to show that their comparison map is injective in all positive degrees in order to conclude that binate groups are boundedly acyclic. This approach was first applied by L{\"o}h to the case of mitotic groups~\cite{Loeh:dim} and then more recently to all binate groups:

\begin{thm}[Binate groups are boundedly acyclic~\cite{fflm2}]
\label{thm:binate:bac}
Binate groups are boundedly acyclic.
\end{thm}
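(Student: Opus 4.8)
The plan is to follow the now-standard two-step strategy: first establish that binate groups are acyclic in ordinary (integral, hence real) homology, and then show that the comparison map $\comp^\bullet\colon H_b^\bullet(\Gamma;\R)\to H^\bullet(\Gamma;\R)$ is injective in every positive degree; combining the two yields $H_b^k(\Gamma;\R)=0$ for all $k\geq 1$. The acyclicity of binate groups is already available in the literature (Berrick~\cite{berrick}, Varadarajan~\cite{varadarajan}), so I would cite it and concentrate on the injectivity of the comparison map. The key idea is that the defining data of a binate group—for each finitely generated $H\leq\Gamma$ a homomorphism $f\colon H\to\Gamma$ with $[H,f(H)]=1$ and an element $t$ with ${}^t f(h)=h\,f(h)$—provides, on the level of (bounded) cochains, a chain-level analogue of a "mitosis" map that makes every bounded cocycle cohomologous to something that already bounds in ordinary cohomology.

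Concretely, I would argue as follows. Fix $k\geq 1$ and a bounded cocycle $c\in \ell^\infty(\Gamma^{k+1};\R)^\Gamma$ representing a class $[c]\in H_b^k(\Gamma;\R)$ with $\comp^k[c]=0$, i.e.\ $c=\delta b$ for some (possibly unbounded) $b\in \ell^\infty(\Gamma^{k};\R)^\Gamma$—wait, more precisely $c=\delta b$ for some ordinary cochain $b$. I want to promote $b$ to a bounded cochain up to coboundary. The standard device is the \emph{bounded acyclicity of the "mapping telescope"} built from the binate structure: one shows that the inclusion $H\hookrightarrow\Gamma$ is, on bounded cohomology, the zero map in positive degrees, by factoring it through the homomorphism $h\mapsto h\,f(h)$ and then conjugating by $t$, using that $[H,f(H)]=1$ so the two factors commute and the cochain computations split as in the proof that mitotic (and binate) groups are acyclic. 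Taking the colimit over all finitely generated $H\leq\Gamma$ (bounded cohomology commutes with directed colimits of groups in the relevant sense, or one argues directly that every bounded cocycle is detected on a finitely generated subgroup) then forces $H_b^k(\Gamma;\R)$ to inject into $\varinjlim H_b^k(H;\R)$ in a way that, combined with acyclicity, gives vanishing. In fact the cleanest route is: (i) recall $\Gamma$ is acyclic; (ii) recall $H^\bullet(\Gamma;\R)=0$ in positive degrees, so the target of $\comp^k$ vanishes for $k\geq 1$; (iii) show $\comp^k$ is injective; conclude $H_b^k(\Gamma;\R)=0$.

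For step (iii), the argument I would run is the cochain-level "binate homotopy": using the maps $f$ and $t$ one builds, for each finitely generated $H$, a chain homotopy on the (bounded) bar complex of $H$ with $\Gamma$-coefficients exhibiting the identity on $H_\bullet(H)$ as homotopic to a map that factors through $f(H)$ and its $t$-conjugate. Because $t$ conjugates $f(H)$ to a subgroup that "absorbs" $H$ (the relation ${}^t f(h)=h f(h)$), one gets an explicit infinite "dilation" homotopy—exactly the mechanism behind Mather's trick and its formalization for binate groups—which is visibly bounded when one works with bounded cochains, since at each stage it only reshuffles cochains without increasing sup-norms. The upshot is that $\comp^\bullet$ is injective because any bounded cocycle that is an ordinary coboundary is filled by a bounded primitive, obtained by pushing the ordinary primitive through this bounded homotopy.

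The main obstacle—and the point where care is genuinely needed—is the \emph{boundedness} and \emph{convergence} of the dilation homotopy: the binate structure only guarantees the homomorphism $f$ and the single element $t$ exist per finitely generated subgroup, and iterating $t$ produces an infinite telescope whose associated cochain operator must be shown to be well-defined (the infinite sums/compositions must make sense) and to have operator norm controlled independently of the number of iterations. In the dissipated case this is geometrically transparent (the supports shrink, cf.\ Figure~\ref{fig:diss}), but for an abstract binate group one must phrase the telescoping purely algebraically and verify that the relevant $\ell^\infty$-cochain series converge in norm. I would handle this exactly as in~\cite{fflm2}: set up the homotopy on a cone/telescope that is itself boundedly acyclic by an Eilenberg-swindle-type argument adapted to bounded cochains, so that convergence is built into the construction rather than checked a posteriori; everything else (naturality, the commutator hypothesis making the two factors split, passing to the colimit over finitely generated subgroups) is then routine.
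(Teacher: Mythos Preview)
The paper does not actually prove this theorem: it is stated as a result from~\cite{fflm2}, and the only argument given is the sentence immediately preceding it, namely that binate groups are acyclic~\cite{berrick,varadarajan} and hence it suffices to show that the comparison map $\comp^k$ is injective for all $k\geq 1$. Your high-level outline (steps (i)--(iii)) reproduces exactly this strategy, so at that level you agree with the paper.

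Where your proposal becomes shaky is in the sketch of step~(iii). Two points in particular:

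\begin{itemize}
\item Your remark that ``bounded cohomology commutes with directed colimits of groups in the relevant sense, or one argues directly that every bounded cocycle is detected on a finitely generated subgroup'' is not correct: bounded cohomology famously does \emph{not} commute with directed colimits, and a bounded cocycle on~$\Gamma$ is not in general determined by its restrictions to finitely generated subgroups. The actual arguments in~\cite{Matsu-Mor,Loeh:dim,fflm2} do not pass to a colimit in this way.
\item Your worry about ``convergence of the $\ell^\infty$-cochain series'' is misdirected. The proof in~\cite{fflm2} (following~\cite{Matsu-Mor,Loeh:dim}) works on the $\ell^1$-chain side via Matsumoto--Morita's characterisation: $\comp^k$ is injective if and only if the group satisfies the uniform boundary condition in degree $k-1$. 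One then checks that the chain-level homotopy coming from the binate data (essentially the algebraic Mather trick) is \emph{uniformly bounded in $\ell^1$-norm}; there is no infinite telescoping sum at this stage, only a finite chain homotopy with controlled norm. So the issue is uniform boundedness of a finite construction, not convergence of an infinite one.
\end{itemize}

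In short: your overall plan is the right one and matches what the paper indicates, but the mechanism you sketch for injectivity of $\comp^\bullet$ is not how the cited proof actually runs, and one of your fallback claims (colimits) is false.
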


We have now covered the properties in Figure \ref{fig:vd-binate} and the corresponding inclusions, except for commuting conjugates, which is the subject of the next paragraph.

\subsubsection{Commuting conjugates, second bounded cohomology and stable commutator length}\label{subsubsec:cc:scl}

The next displacement property that appeared in the literature is the one of \emph{commuting conjugates}, which is the weakest and most natural one.\ This notion was introduced by Kotschick~\cite[Proposition 2.2]{kotschick} for studying the stable commutator length, and then deeply investigated in the context of second bounded cohomology~\cite{fflodha}.\ Kotschick's results are phrased with slightly different assumptions \cite[Proposition 2.1]{kotschick}, but the underlying property is the following:

\begin{defi}[Commuting conjugates]
\label{def:cc}
The group $\Gamma$ has \emph{commuting conjugates} (abbreviated c.\ c.) if for every finitely generated subgroup $H \leq \Gamma$ there exists $t \in \Gamma$ such that $[H, {}^t H] = 1$.
\end{defi}

As in the case of binate groups, this property arises very naturally in the case of boundedly supported groups.

\begin{example}[Commuting conjugates]
\label{ex:cc:action}
Let $X$ be a set with bounded subsets $(X_i)_{i \in I}$, and let $\Gamma$ be a boundedly supported group of bijections of $X$ such that for every $i \in I$ there exists $t_i \in \Gamma$ such that $X_i \cap t_i(X_i) = \emptyset$.\ Then $\Gamma$ has commuting conjugates, as displayed in Figure \ref{fig:cc}.
\end{example}

\begin{figure}
\centering
\def\svgscale{0.4}
%% Creator: Inkscape 1.3 (0e150ed6c4, 2023-07-21), www.inkscape.org
%% PDF/EPS/PS + LaTeX output extension by Johan Engelen, 2010
%% Accompanies image file 'cc.pdf' (pdf, eps, ps)
%%
%% To include the image in your LaTeX document, write
%%   \input{<filename>.pdf_tex}
%%  instead of
%%   \includegraphics{<filename>.pdf}
%% To scale the image, write
%%   \def\svgwidth{<desired width>}
%%   \input{<filename>.pdf_tex}
%%  instead of
%%   \includegraphics[width=<desired width>]{<filename>.pdf}
%%
%% Images with a different path to the parent latex file can
%% be accessed with the `import' package (which may need to be
%% installed) using
%%   \usepackage{import}
%% in the preamble, and then including the image with
%%   \import{<path to file>}{<filename>.pdf_tex}
%% Alternatively, one can specify
%%   \graphicspath{{<path to file>/}}
%% 
%% For more information, please see info/svg-inkscape on CTAN:
%%   http://tug.ctan.org/tex-archive/info/svg-inkscape
%%
\begingroup%
  \makeatletter%
  \providecommand\color[2][]{%
    \errmessage{(Inkscape) Color is used for the text in Inkscape, but the package 'color.sty' is not loaded}%
    \renewcommand\color[2][]{}%
  }%
  \providecommand\transparent[1]{%
    \errmessage{(Inkscape) Transparency is used (non-zero) for the text in Inkscape, but the package 'transparent.sty' is not loaded}%
    \renewcommand\transparent[1]{}%
  }%
  \providecommand\rotatebox[2]{#2}%
  \newcommand*\fsize{\dimexpr\f@size pt\relax}%
  \newcommand*\lineheight[1]{\fontsize{\fsize}{#1\fsize}\selectfont}%
  \ifx\svgwidth\undefined%
    \setlength{\unitlength}{648bp}%
    \ifx\svgscale\undefined%
      \relax%
    \else%
      \setlength{\unitlength}{\unitlength * \real{\svgscale}}%
    \fi%
  \else%
    \setlength{\unitlength}{\svgwidth}%
  \fi%
  \global\let\svgwidth\undefined%
  \global\let\svgscale\undefined%
  \makeatother%
  \begin{picture}(1,0.44444444)%
    \lineheight{1}%
    \setlength\tabcolsep{0pt}%
    \put(0,0){\includegraphics[width=\unitlength,page=1]{cc.pdf}}%
    \put(0.22577226,0.21118388){\color[rgb]{0,0,0}\makebox(0,0)[lt]{\lineheight{1.25}\smash{\begin{tabular}[t]{l}$H$\end{tabular}}}}%
    \put(0.4819396,0.36202078){\color[rgb]{0,0,0}\makebox(0,0)[lt]{\lineheight{1.25}\smash{\begin{tabular}[t]{l}$t$\end{tabular}}}}%
    \put(0.72808711,0.2113474){\color[rgb]{0,0,0}\makebox(0,0)[lt]{\lineheight{1.25}\smash{\begin{tabular}[t]{l}${}^t H$\end{tabular}}}}%
    \put(0,0){\includegraphics[width=\unitlength,page=2]{cc.pdf}}%
  \end{picture}%
\endgroup%

\caption{\emph{Ad Example \ref{ex:cc:action}:}\ $H$ is a finitely generated subgroup of $\Gamma$, supported on the ball on the left, and $t$ is an element that displaces this set, supported on the rectangle.}
\label{fig:cc}
\end{figure}

It is immediate from the definition that mitotic groups have commuting conjugates.\ Moreover, Example \ref{ex:cc:action} shows that dissipated groups have commuting conjugates. This completes the inclusions in Figure \ref{fig:vd-binate}.\ We will discuss the missing examples in order to prove the correctness of Figure~\ref{fig:vd-binate} in Section~\ref{ss:vd-binate}.

\medskip

The simple property of commuting conjugates was shown by Kotschick to imply vanishing of stable commutator length:

\begin{thm}[Commuting conjugates implies vanishing of scl~\cite{kotschick}]
\label{thm:cc:scl}

Let $\Gamma$ be a group with commuting conjugates.\ Then the stable commutator length vanishes identically on $\Gamma'$.
\end{thm}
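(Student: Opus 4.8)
The plan is to reduce to quasimorphisms via Bavard Duality and then use the displacement property to kill all of them on $\Gamma'$. Recall that, by Bavard Duality, $\textup{scl}$ vanishes identically on $\Gamma'$ if and only if every homogeneous quasimorphism $\psi\colon\Gamma\to\R$ vanishes on $\Gamma'$; moreover, for $x\in\Gamma'$ one has the quantitative bound $|\psi(x)|\le 2D(\psi)\,\textup{scl}_\Gamma(x)$, where $D(\psi)$ is the defect of $\psi$. So I would fix a homogeneous quasimorphism $\psi$ and an element $g\in\Gamma'$; we may assume $D(\psi)>0$ (otherwise $\psi$ is a homomorphism, hence vanishes on $\Gamma'$), and the goal is to show $\psi(g)=0$.

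First I would fix an expression $g=\prod_{j=1}^m[x_j,y_j]$ and set $H=\langle x_1,y_1,\dots,x_m,y_m\rangle$, a finitely generated subgroup with $g\in[H,H]$ and $s\coloneqq\textup{scl}_H(g)\le m<\infty$. The key point is that a single application of commuting conjugates only yields a tautology about $\psi$, so one must \emph{iterate}: set $H^{(0)}=H$; given $2^k$ pairwise commuting conjugates of $H$ generating a finitely generated subgroup $H^{(k)}\le\Gamma$, apply commuting conjugates to $H^{(k)}$ to obtain $t\in\Gamma$ with $[H^{(k)},{}^tH^{(k)}]=1$, and observe that the given $2^k$ conjugates together with their $t$-conjugates form $2^{k+1}$ pairwise commuting conjugates of $H$ (the three types of pair --- old/old, new/new, old/new --- commute respectively by the inductive hypothesis, by conjugating it by $t$, and by the relation $[H^{(k)},{}^tH^{(k)}]=1$). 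Iterating, for each $N\ge 1$ we obtain elements $c_0=1,c_1,\dots,c_{N-1}\in\Gamma$ such that ${}^{c_0}H,\dots,{}^{c_{N-1}}H$ pairwise commute.

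Now I would consider the map $\Psi\colon H\to\Gamma$, $\Psi(h)=\prod_{i=0}^{N-1}{}^{c_i}h$, which is a homomorphism precisely because the subgroups ${}^{c_i}H$ commute pairwise, and put $P\coloneqq\Psi(g)=\prod_{i=0}^{N-1}{}^{c_i}g\in\Gamma'$. Since stable commutator length does not increase under homomorphisms, $\textup{scl}_\Gamma(P)\le\textup{scl}_H(g)=s$, and hence $|\psi(P)|\le 2D(\psi)\,s$, a bound independent of $N$. On the other hand, the elements ${}^{c_i}g$ pairwise commute, so they lie in a common abelian subgroup on which the homogeneous quasimorphism $\psi$ restricts to a homomorphism; using also the conjugation-invariance of homogeneous quasimorphisms, $\psi(P)=\sum_{i=0}^{N-1}\psi({}^{c_i}g)=N\psi(g)$. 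Comparing the two estimates, $N\,|\psi(g)|\le 2D(\psi)\,s$ for every $N\ge 1$, which forces $\psi(g)=0$. As $\psi$ and $g$ were arbitrary, $\textup{scl}$ vanishes identically on $\Gamma'$.

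The main obstacle is recognizing that commuting conjugates has to be invoked arbitrarily many times: a single use gives only the weak estimate $|\psi(g)|\le D(\psi)\,\textup{scl}_H(g)$, and it is the iteration --- producing for every $N$ a homomorphism from $H$ into $\Gamma$ that spreads $g$ over $N$ mutually commuting conjugate copies --- combined with the two opposing controls on $\psi(P)$ (linear growth in $N$ versus an $N$-independent bound from Bavard Duality) that yields the conclusion. The remaining ingredients --- finiteness of $\textup{scl}_H(g)$, monotonicity of $\textup{scl}$ under homomorphisms, and the additivity and conjugation-invariance of homogeneous quasimorphisms --- are all standard.
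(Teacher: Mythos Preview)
Your proof is correct. The paper itself does not supply a proof of this theorem; it only cites Kotschick and remarks that, via Bavard Duality, the statement is equivalent to injectivity of the comparison map in degree two. So there is no ``paper's own proof'' to compare against beyond the original reference.

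That said, your route is not quite Kotschick's. Kotschick argues directly with commutator length: writing $g\in[H,H]$ with $\textup{cl}_H(g)=m$, one uses commuting conjugates to produce $t$ with $[H,{}^tH]=1$, so that the diagonal $g\cdot{}^tg$ satisfies $\textup{cl}_\Gamma(g\cdot{}^tg)\le m$; since ${}^tg\cdot g^{-1}=[t,g]$, one gets $\textup{cl}_\Gamma(g^2)\le m+1$. Iterating this doubling (applying commuting conjugates to $\langle H,{}^tH\rangle$, etc.) yields $\textup{cl}_\Gamma(g^{2^k})\le m+k$, hence $\textup{scl}(g)=0$. Your argument instead passes through Bavard Duality and homogeneous quasimorphisms: the same iterated displacement produces $N$ pairwise commuting conjugates of $H$, a diagonal homomorphism $\Psi$, and the tension between $\psi(\Psi(g))=N\psi(g)$ and the $N$-independent bound $|\psi(\Psi(g))|\le 2D(\psi)\,\textup{scl}_H(g)$. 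Both approaches hinge on the same iteration; yours trades an explicit commutator identity for standard facts about homogeneous quasimorphisms (conjugation-invariance, additivity on commuting elements, monotonicity of $\textup{scl}$ under homomorphisms), which makes it slightly cleaner to write but less elementary in its prerequisites.
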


As we mentioned earlier, by Bavard Duality \cite{bavard} this result is equivalent to the injectivity of the comparison map in degree two \cite[Corollary 2.11]{Frigerio:book}.
This was then strengthened to the vanishing of the whole second bounded cohomology group:

\begin{thm}[Commuting conjugates implies vanishing of $H_b^2$~\cite{fflodha}]
\label{thm:cc:2bac}

Let $\Gamma$ be a group with commuting conjugates.\ Then $\Gamma$ is boundedly $2$-acyclic.
\end{thm}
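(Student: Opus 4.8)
The plan is to deduce the statement from Theorem~\ref{thm:cc:scl} by means of a doubling swindle. By Theorem~\ref{thm:cc:scl} and Bavard Duality (cf.\ \cite[Corollary~2.11]{Frigerio:book}) the comparison map $\comp^2 \colon H^2_b(\Gamma;\R) \to H^2(\Gamma;\R)$ is injective, so it is enough to show that $\comp^2$ is also the zero map. As $H_\bullet(-;\R)$ commutes with directed unions of subgroups and $H^2(H;\R) \cong \operatorname{Hom}_\R(H_2(H;\R),\R)$ for every group $H$, this reduces to the following: for every finitely generated $H \leq \Gamma$ and every $[c] \in H^2_b(\Gamma;\R)$, the restriction $\iota_H^*[c] \in H^2_b(H;\R)$ dies under $\comp^2_H$.

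Fix such an $H$ and $[c]$; we may assume $H$ is non-amenable, since otherwise $H^2_b(H;\R) = 0$ by Johnson's theorem~\cite{Johnson} and there is nothing to prove. First I would use the commuting conjugates property to build many commuting conjugates of $H$: a standard doubling argument produces, for every $N \in \N$, distinct elements $s_1 = e, s_2, \dots, s_N \in \Gamma$ with $[{}^{s_i}H, {}^{s_j}H] = 1$ for all $i \neq j$. (At each stage one picks $t \in \Gamma$ with $[K, {}^t K] = 1$ for $K$ the subgroup generated by the current family, and adjoins the $t$-conjugate of the family; this cannot stall, as a stall would force $K$, hence $H$, to be abelian.) Since the subgroups ${}^{s_i}H$ pairwise commute, the pointwise product $\psi_N \colon H \to \Gamma$, $h \mapsto {}^{s_1}h \cdots {}^{s_N}h$, is a well-defined homomorphism. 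I claim $\psi_N^*[c] = N \cdot \iota_H^*[c]$ in $H^2_b(H;\R)$: writing $\psi_N$ as the pointwise product of $\psi_{N-1}$ with $h \mapsto {}^{s_N}h$ --- two homomorphisms with commuting images --- and using that conjugation by $s_N$ induces the identity on $H^2_b(\Gamma;\R)$, this follows by induction from additivity of the degree-two pullback along homomorphisms with commuting images.

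Granting the claim, pullback does not increase the seminorm, so $N \cdot \| \iota_H^*[c] \|_\infty = \| \psi_N^*[c] \|_\infty \leq \| [c] \|_\infty$ for all $N$, whence $\| \iota_H^*[c] \|_\infty = 0$. A class in $H^2_b(H;\R)$ of zero seminorm has trivial image in $H^2(H;\R)$: pick bounded cocycles $c_n$ representing $\iota_H^*[c]$ with $\| c_n \|_\infty \to 0$; for any $\sigma \in H_2(H;\R)$, represented by a $2$-cycle $z$ in the bar complex, one has $\langle \comp^2_H(\iota_H^*[c]), \sigma \rangle = \langle c_n, z \rangle$ and $|\langle c_n, z \rangle| \leq \| c_n \|_\infty \| z \|_1 \to 0$, so this pairing vanishes; as the pairing of $H^2(H;\R)$ with $H_2(H;\R)$ is perfect, $\comp^2_H(\iota_H^*[c]) = 0$, completing the reduction of the first paragraph.

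The technical heart --- and the step I expect to be the main obstacle --- is the additivity used in the second paragraph: for homomorphisms $\rho_1, \rho_2 \colon K \to L$ with $[\rho_1(K), \rho_2(K)] = 1$, the pointwise product $\rho_1 \cdot \rho_2$ satisfies $(\rho_1 \cdot \rho_2)^* = \rho_1^* + \rho_2^*$ on $H^2_b(L;\R)$. This is the bounded-cohomological shadow of the degree-two Künneth splitting $H^2_b(K_1 \times K_2;\R) \cong H^2_b(K_1;\R) \oplus H^2_b(K_2;\R)$ --- apply it after factoring $\rho_1 \cdot \rho_2$ as $K \xrightarrow{\Delta} K \times K \to L$ --- and it is the only point at which the commutativity in the definition of commuting conjugates enters essentially; it can be obtained either by invoking that Künneth formula or, less cleanly, by a direct manipulation of inhomogeneous bounded cocycles via the cocycle identity.
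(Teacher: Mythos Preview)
The paper does not prove this theorem; it is stated as a result quoted from \cite{fflodha}, with no argument given here. So there is no ``paper's own proof'' to compare against.

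That said, your argument is correct and is in fact very close in spirit to the proof in the cited source. A few minor remarks. The parenthetical about the doubling ``stalling'' is unnecessary: you never use that the $s_i$ (or the conjugates ${}^{s_i}H$) are distinct, only that the conjugates pairwise commute, and the inductive construction guarantees that regardless. The reduction via $H_2(\Gamma;\R) = \varinjlim_H H_2(H;\R)$ and universal coefficients is fine; you could streamline by noting directly that zero-seminorm classes lie in the kernel of the comparison map (this is standard, and your pairing argument is one proof of it), and then observing that $\iota_H^*[c]$ having seminorm zero for every finitely generated $H$ forces $[c]$ itself to have seminorm zero, since every bounded cocycle on $\Gamma$ restricted to a cofinal system of finitely generated subgroups determines it. Finally, you are right to flag the additivity $(\rho_1\cdot\rho_2)^* = \rho_1^* + \rho_2^*$ on $H^2_b$ for commuting images as the one genuinely nontrivial input; this is indeed the crux of the argument in \cite{fflodha}, and your two suggested routes (degree-two K\"unneth for $H^2_b$, or a direct cocycle computation producing an explicit bounded primitive) both work.
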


\subsubsection{Commuting cyclic conjugates, commuting $\Z$-conjugates and commuting $\Z/n$-conjugates}\label{subsubsec:ccc}

We now move to the properties in Figure \ref{fig:vd-ccc}.
For a group to have commuting conjugates, there needs to be no extra structure on the sets of elements $t$ that appear in the definition.\ However, in order to obtain the vanishing of bounded cohomology in higher degrees, we have to require that the sets of conjugates are well organized ~\cite{ccc}. This leads to the definition of \emph{commuting cyclic conjugates}:

\begin{defi}[Commuting cyclic conjugates~\cite{ccc}]
\label{def:ccc}
The group $\Gamma$ has \emph{commuting cyclic conjugates} (abbreviated c.\ c.\ c.) if for every finitely generated subgroup $H \leq \Gamma$ there exist $t \in \Gamma$ and $n \in \Z_{\geq 2} \cup \{\infty\}$ such that $[H, {}^{t^p} H] = 1$ for $1 \leq p < n$ and $[H, t^n] = 1$.\ Here we read that $t^\infty = 1$.
\end{defi}

Let us formulate explicitly the naturally occurring case when $n$ is infinite:

\begin{defi}[Commuting $\Z$-conjugates~\cite{ccc}]
\label{def:czc}
The group $\Gamma$ has \emph{commuting $\mathbb{Z}$-conjugates} (abbreviated c.\ $\Z$ c.) if for every finitely generated subgroup $H \leq \Gamma$ there exists $t \in \Gamma$ such that $[H, {}^{t^p}H] = 1$ for all $p \geq 1$.
\end{defi}

\begin{example}[Commuting $\Z$-conjugates]\label{ex:czc:action}
Let $X$ be a set with bounded subsets $(X_i)_{i \in I}$, and let $\Gamma$ be a boundedly supported group of bijections of $X$ such that for every $i \in I$ there exists $t_i \in \Gamma$ such that $X_i \cap t_i^p(X_i) = \emptyset$ for all $p \geq 1$.\ Then $\Gamma$ has commuting $\mathbb{Z}$-conjugates, as displayed in Figure \ref{fig:czc}.
\end{example}

\begin{figure}
\centering
\def\svgscale{0.35}
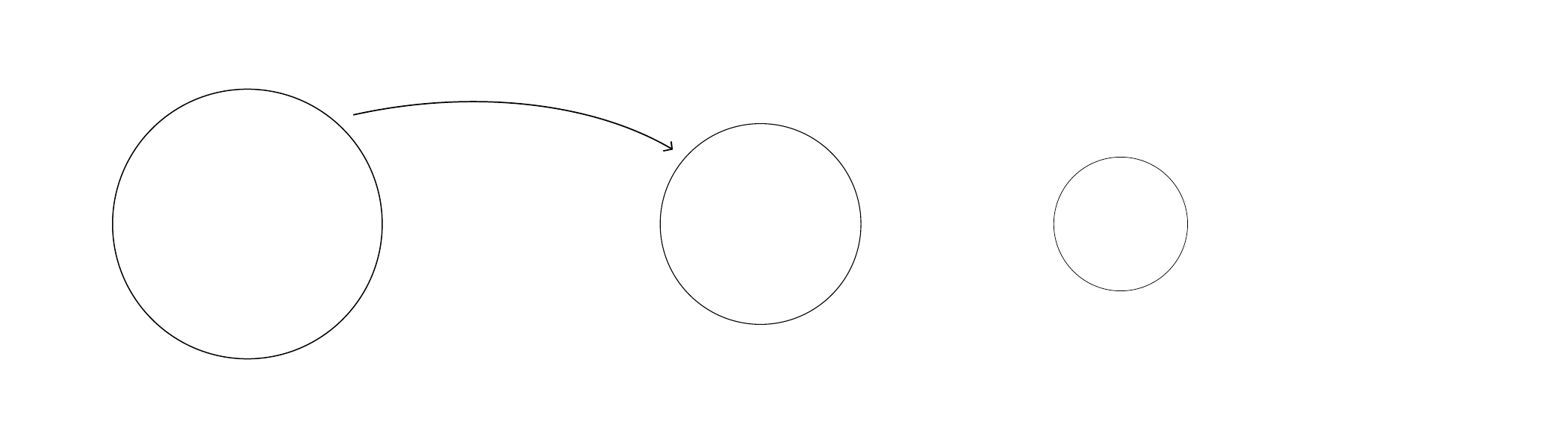
\caption{\emph{Ad Example \ref{ex:czc:action}:}\ $H$ is a finitely generated subgroup of $\Gamma$, supported on the ball on the left, and $t$ is an element all of whose powers displace this set, supported on the rectangle. Note that since $t$ is supported on the rectangle, the translates of the balls must shrink in size.}
\label{fig:czc}
\end{figure}

The previous example shows that dissipated groups have commuting $\Z$-conjugates (whence, commuting cyclic conjugates).\ This explains one inclusion in Figure \ref{fig:vd-czc}.

\medskip

The main advantage of commuting $\Z$-conjugates over dissipated is that in the former case the existence of elements acting simultaneously on infinitely many components of support is not required. This recalls the Aristotelian distinction between actual and potential infinity \cite[p. 664]{monod:thompson}.\ As a specific instance of this, certain groups of diffeomorphisms or piecewise linear homeomorphisms have commuting $\Z$-conjugates, but they cannot be dissipated, because of the singularity that forms at the accumulation point of the shifted copies of bounded sets~\cite[p.~402]{kotschick}.

In turn, the advantage of commuting cyclic conjugates over commuting $\Z$-conjugates, is that one only needs elements that shift a bounded set a finite number of times, as long as the corresponding dynamics is well-behaved. This allows to encompass transformation groups that preserve an additional structure, which makes it impossible for a single element to shift sets infinitely many times; we will see a clear example of this in Section \ref{ss:GL}. We specialize the definition of commuting cyclic conjugates to describe the situations where finite orbits of shifted bounded sets suffice.

\begin{defi}[Commuting $\Z / n$-conjugates]
    \label{def:cnc}
    Let $n \geq 2$.\ The group $\Gamma$ has \emph{commuting $\Z/n$-conjugates} (abbreviated c.\ $\Z/n$ c.) if for every finitely generated subgroup $H \leq \Gamma$ there exists $t \in \Gamma$ such that $[H, {}^{t^p}H] = 1$ for $1 \leq p < n$, and $[H, t^n] = 1$.
\end{defi}

All the examples arising from geometry, combinatorial group theory, dynamics and algebraic topology that are known to have commuting cyclic conjugates are actually instances of either commuting $\Z$-conjugates or commuting $\Z/2$-conjugates~\cite{ccc}.\ On the other hand, it is possible to construct more complicated examples via combinatorial group theory (Section~\ref{ss:vd-ccc}).

\begin{example}[Commuting $\Z/2$-conjugates]
\label{ex:ccc:action}
Let $X$ be a set with bounded subsets $(X_i)_{i \in I}$, and let $\Gamma$ be a boundedly supported group of bijections of $X$ such that for every $i \in I$ there exists $t_i \in \Gamma$ such that $X_i \cap t_i(X_i) = \emptyset$ and $t_i^2$ fixes $X_i$ pointwise.\ Then $\Gamma$ has commuting $\Z/2$-conjugates, as displayed in Figure \ref{fig:c2c}.
\end{example}

\begin{figure}
\centering
\def\svgscale{0.4}
%% Creator: Inkscape 1.3 (0e150ed6c4, 2023-07-21), www.inkscape.org
%% PDF/EPS/PS + LaTeX output extension by Johan Engelen, 2010
%% Accompanies image file '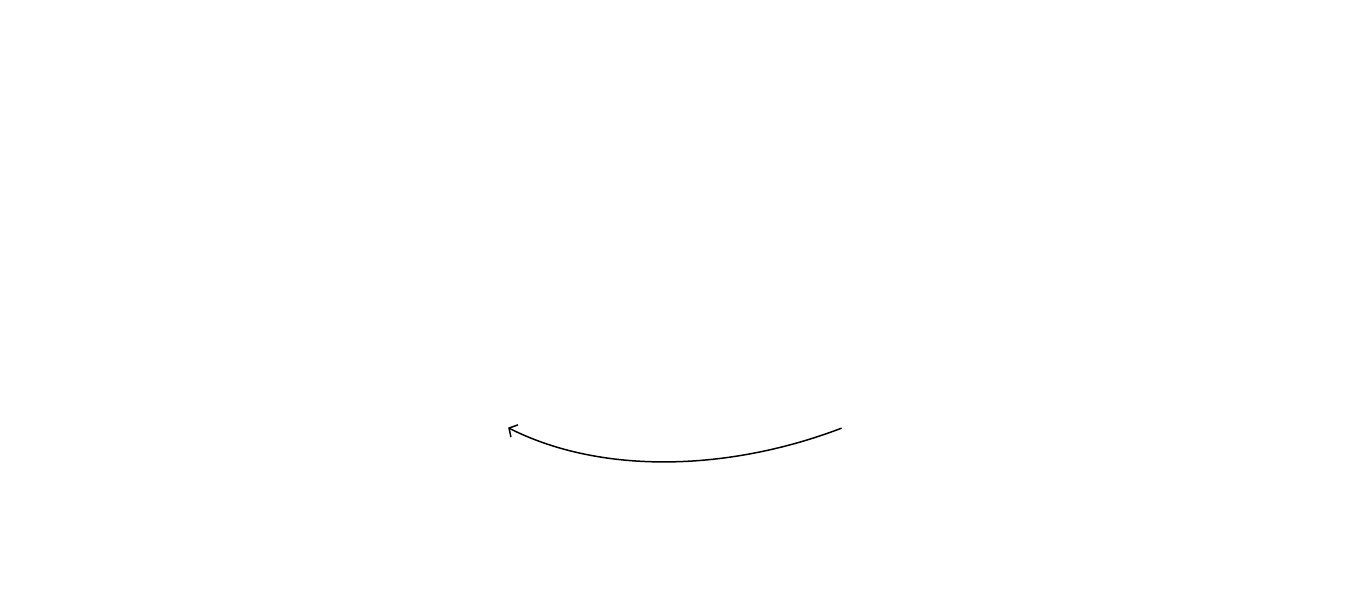' (pdf, eps, ps)
%%
%% To include the image in your LaTeX document, write
%%   \input{<filename>.pdf_tex}
%%  instead of
%%   \includegraphics{<filename>.pdf}
%% To scale the image, write
%%   \def\svgwidth{<desired width>}
%%   \input{<filename>.pdf_tex}
%%  instead of
%%   \includegraphics[width=<desired width>]{<filename>.pdf}
%%
%% Images with a different path to the parent latex file can
%% be accessed with the `import' package (which may need to be
%% installed) using
%%   \usepackage{import}
%% in the preamble, and then including the image with
%%   \import{<path to file>}{<filename>.pdf_tex}
%% Alternatively, one can specify
%%   \graphicspath{{<path to file>/}}
%% 
%% For more information, please see info/svg-inkscape on CTAN:
%%   http://tug.ctan.org/tex-archive/info/svg-inkscape
%%
\begingroup%
  \makeatletter%
  \providecommand\color[2][]{%
    \errmessage{(Inkscape) Color is used for the text in Inkscape, but the package 'color.sty' is not loaded}%
    \renewcommand\color[2][]{}%
  }%
  \providecommand\transparent[1]{%
    \errmessage{(Inkscape) Transparency is used (non-zero) for the text in Inkscape, but the package 'transparent.sty' is not loaded}%
    \renewcommand\transparent[1]{}%
  }%
  \providecommand\rotatebox[2]{#2}%
  \newcommand*\fsize{\dimexpr\f@size pt\relax}%
  \newcommand*\lineheight[1]{\fontsize{\fsize}{#1\fsize}\selectfont}%
  \ifx\svgwidth\undefined%
    \setlength{\unitlength}{648bp}%
    \ifx\svgscale\undefined%
      \relax%
    \else%
      \setlength{\unitlength}{\unitlength * \real{\svgscale}}%
    \fi%
  \else%
    \setlength{\unitlength}{\svgwidth}%
  \fi%
  \global\let\svgwidth\undefined%
  \global\let\svgscale\undefined%
  \makeatother%
  \begin{picture}(1,0.44444444)%
    \lineheight{1}%
    \setlength\tabcolsep{0pt}%
    \put(0,0){\includegraphics[width=\unitlength,page=1]{c2c.pdf}}%
    \put(0.48196008,0.05801876){\color[rgb]{0,0,0}\makebox(0,0)[lt]{\lineheight{1.25}\smash{\begin{tabular}[t]{l}$t$\end{tabular}}}}%
    \put(0,0){\includegraphics[width=\unitlength,page=2]{c2c.pdf}}%
    \put(0.22577226,0.21118388){\color[rgb]{0,0,0}\makebox(0,0)[lt]{\lineheight{1.25}\smash{\begin{tabular}[t]{l}$H$\end{tabular}}}}%
    \put(0,0){\includegraphics[width=\unitlength,page=3]{c2c.pdf}}%
    \put(0.4819396,0.36202078){\color[rgb]{0,0,0}\makebox(0,0)[lt]{\lineheight{1.25}\smash{\begin{tabular}[t]{l}$t$\end{tabular}}}}%
    \put(0.72808711,0.2113474){\color[rgb]{0,0,0}\makebox(0,0)[lt]{\lineheight{1.25}\smash{\begin{tabular}[t]{l}${}^t H$\end{tabular}}}}%
    \put(0,0){\includegraphics[width=\unitlength,page=4]{c2c.pdf}}%
  \end{picture}%
\endgroup%

\caption{\emph{Ad Example \ref{ex:ccc:action}:}\ $H$ is a finitely generated subgroup of $\Gamma$, supported on the ball on the left, and $t$ is an element that displaces this set, and whose square fixes the same set pointwise, supported on the rectangle.}
\label{fig:c2c}
\end{figure}

Recall that groups with commuting conjugates have zero second bounded cohomology (Theorem \ref{thm:cc:2bac}). As anticipated before, the notion of commuting cyclic conjugates, which slightly strengthens the notion of commuting conjugates, implies a much stronger vanishing result:

\begin{thm}[Commuting cyclic conjugates implies $\Xsep$-bounded acyclicity~\cite{ccc}]
\label{thm:ccc:bac}
Let $\Gamma$ be a group with commuting cyclic conjugates. Then $\Gamma$ is $\Xsep$-boundedly acyclic.
\end{thm}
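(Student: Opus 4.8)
The plan is to reduce the statement to the ``lamplighter'' case, the restricted wreath product $H \wr \Z$, in two moves: a soft localisation to finitely generated subgroups, and an \emph{Aristotelian} promotion turning finite displacement orbits into infinite ones.

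\emph{Localisation.} Fix $n \geq 1$, a separable dual Banach $\Gamma$-module $E = F'$, and a bounded $\Gamma$-invariant cocycle, written inhomogeneously as $\omega \in \ell^\infty(\Gamma^n; E)$ with $\|\omega\|_\infty \leq 1$. The first step is to reduce the problem to producing, for \emph{every} finitely generated $H \leq \Gamma$, a primitive $\beta_H \in \ell^\infty(H^{n-1}; E)$ with $\delta\beta_H = \omega|_{H^n}$ and $\|\beta_H\|_\infty \leq C(n)$, where $C(n)$ does not depend on $H$. Granting this, one directs the finitely generated subgroups of $\Gamma$ by inclusion, extends each $\beta_H$ by zero to a cochain in $\ell^\infty(\Gamma^{n-1};E)$, notes that $\ell^\infty(\Gamma^{n-1};E) = (\ell^1(\Gamma^{n-1};F))'$ is a dual Banach space on which the bar coboundary $\delta$ is weak-$*$ continuous (each of its terms is transpose to a pushforward map on the preduals, twisted by the weak-$*$ continuous module action), and extracts via Banach--Alaoglu a weak-$*$ cluster point $\beta_\infty$ of the uniformly bounded net of extended primitives. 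Evaluating $\delta\beta_\infty$ on a fixed simplex $\sigma \in \Gamma^n$ and pairing with $F$ gives $\delta\beta_\infty(\sigma) = \omega(\sigma)$, because the corresponding equality for $\beta_H$ holds as soon as $H$ contains the finitely many entries of $\sigma$; hence $\omega$ is a coboundary. This step uses only that $E$ is a \emph{dual} module; separability is spent below.

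\emph{The $\Z$-case.} Suppose first that $H$ witnesses commuting $\Z$-conjugates via $t \in \Gamma$, so $[H, {}^{t^p}H] = 1$ for all $p \geq 1$. A short computation shows that the conjugates ${}^{t^p}H$, $p \in \Z$, pairwise commute, so inside $\langle H, t\rangle \leq \Gamma$ they generate a quotient of $\bigoplus_{\Z} H$ on which $t$ acts by the shift; consequently the inclusion $H \hookrightarrow \langle H, t\rangle$ factors through $H \wr \Z$ via the coordinate-$0$ embedding (if $t$ has finite order the relevant group is amenable and there is nothing to prove). It is therefore enough to show $H_b^n(H\wr\Z;E) = 0$ with a primitive whose norm is bounded in terms of $\|\omega\|_\infty$ and $n$ only. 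This is the heart of the matter, and one proves it by the infinite Mather/Berrick swindle carried out at the level of bounded cochains, in the spirit of Monod's computation for Thompson's group $F$: using that the coordinate copies of $H$ pairwise commute and that the shift $\tau$ eventually displaces any bounded configuration of coordinates, one exhibits a cocycle on $H\wr\Z$ as the coboundary of a telescoping alternating series of pullbacks of that cocycle by powers of $\tau$, Cesàro-averages the partial sums, and passes to a weak-$*$ convergent \emph{subsequence} of these averages. The existence of such a convergent subsequence is exactly where separability enters: the unit ball of the dual module $\ell^\infty((H\wr\Z)^{n-1};E)$ is weak-$*$ metrisable precisely when the predual $\ell^1(\,\cdot\,;F)$ is separable, i.e.\ precisely when $E$ is separable. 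The norm of the resulting primitive is controlled by $\|\omega\|_\infty$ and $n$ alone, uniformly in $H$, and restricting it along $H \hookrightarrow H\wr\Z$ supplies the $\beta_H$ needed above.

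\emph{The finite-period case, and the main obstacle.} If instead $H$ witnesses only commuting $\Z/n_0$-conjugates with $n_0$ finite, then $[H, t^{n_0}] = 1$ forces ${}^{t^{n_0}}H = H$, so the conjugates form a \emph{periodic} orbit of period $n_0$: only $n_0$ commuting copies are available and the swindle cannot be run directly. The remedy is to promote this finite configuration to a genuinely infinite one. Since $t^{n_0}$ is central in $\langle H, t\rangle$, one may pass to the quotient by $\langle t^{n_0}\rangle$, which has the same bounded cohomology (quotients by amenable normal subgroups do not affect bounded cohomology, with the coefficient module replaced by its $\langle t^{n_0}\rangle$-invariants, which is again separable dual), and there relocate the configuration into an infinite lamplighter-type group to which the previous step applies, so as to conclude that $H_b^n(\Gamma;E) \to H_b^n(H;E)$ is zero with uniformly bounded primitives. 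I expect this promotion to be the main obstacle of the argument: the displacement element is \emph{local} --- it depends on $H$, and $\Gamma$ carries no global shift --- and a finite-period displacement cannot be iterated, so the swindle must be transported into an auxiliary group, with the delicate point being to certify that its output is a genuinely bounded and equivariant cochain rather than one that is merely correct on average, which is exactly where the dual structure and the separability of $E$ are both indispensable. Once the uniformly bounded primitives $\beta_H$ are available in all cases, the localisation step completes the proof.
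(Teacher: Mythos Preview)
The present paper does not prove this theorem; it is quoted from~\cite{ccc}, so there is no in-paper proof to compare against. I assess your proposal on its own terms.

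Your localisation step is sound, and in the $\Z$-case the reduction to $H_b^n(H\wr\Z;E)=0$ via the surjection $H\wr\Z \twoheadrightarrow \langle H, t\rangle$ is correct; the sketched swindle with Ces\`aro averages and weak-$*$ sequential compactness coming from separability of the predual is indeed the mechanism underlying~\cite{monod:thompson} and~\cite{ccc}.

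The genuine gap is the finite-period case, which you yourself flag as the main obstacle and do not resolve. After quotienting $\langle H, t\rangle$ by the central subgroup $\langle t^{n_0}\rangle$, the image $\bar t$ has order dividing $n_0$, so the quotient contains \emph{no} infinite shift, and there is no way to ``relocate into an infinite lamplighter-type group'': the only natural map between $\bar H \wr \Z$ and $\bar H \wr \Z/n_0$ is the surjection in the wrong direction, which does not transport vanishing. Nor can one bypass the quotient and map $H\wr\Z$ directly to $\langle H, t\rangle$: for $p \equiv q \pmod{n_0}$ with $p \neq q$ one has ${}^{t^p}H = {}^{t^q}H$, and a non-abelian group does not commute with itself, so the putative homomorphism from $\bigoplus_{\Z}H$ is ill-defined. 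As written, your proposal correctly isolates the hardest case and then stops short of it. (The Aristotelian remark you invoke concerns the passage from dissipated to commuting $\Z$-conjugates --- actual versus potential infinity along a \emph{single} infinite orbit --- and is not a device for promoting finite orbits to infinite ones.)
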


This result already shows that the vanishing of the bounded cohomology in the case of dissipated groups goes beyond trivial real coefficients:

\begin{cor}[Dissipated groups are $\Xsep$-boundedly acyclic]
    Dissipated groups are $\Xsep$-boundedly acyclic.
\end{cor}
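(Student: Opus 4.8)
The plan is to deduce this immediately from Theorem~\ref{thm:ccc:bac}: it suffices to show that every dissipated group has commuting cyclic conjugates, and in fact we will get the stronger property of commuting $\Z$-conjugates. This is essentially the remark made right after Example~\ref{ex:czc:action}, which we unpack.

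First I would fix a dissipated group $\Gamma$ of bijections of a set $X$, written as a directed union of bounded sets $(X_i)_{i \in I}$, and let $H \leq \Gamma$ be a finitely generated subgroup. Each element of a fixed finite generating set of $H$ has support contained in some $X_i$; since the system $(X_i)_{i \in I}$ is directed, there is a single index $i_0 \in I$ containing all these supports, and hence $\supp(h) \subset X_{i_0}$ for every $h \in H$.

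Next I would invoke the defining property of dissipated groups to pick a dissipator $t \coloneqq t_{i_0}$ for $\Gamma$ and $X_{i_0}$. By condition~(1) of Definition~\ref{def:dissipated} we have $t^p(X_{i_0}) \cap X_{i_0} = \emptyset$ for all $p \geq 1$. Since $H$ is supported on $X_{i_0}$, the conjugate ${}^{t^p} H$ is supported on $t^p(X_{i_0})$, which is disjoint from $X_{i_0}$, and hence disjoint from the support of $H$. As bijections with disjoint supports commute, this gives $[H, {}^{t^p} H] = 1$ for all $p \geq 1$, which is exactly the definition of commuting $\Z$-conjugates (Definition~\ref{def:czc}). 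This is the $n = \infty$ instance of Definition~\ref{def:ccc} (reading $t^\infty = 1$, so that $[H, t^\infty] = 1$ holds trivially), so $\Gamma$ has commuting cyclic conjugates. Applying Theorem~\ref{thm:ccc:bac} then finishes the proof.

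I do not expect any genuine obstacle: the whole content is organising the known implications in the right order. The only point worth a word of care is the passage from ``each generator of $H$ is boundedly supported'' to ``$H$ is supported on a single bounded set $X_{i_0}$'', which uses directedness of $(X_i)_{i \in I}$; everything else is the observation that disjoint supports force commutation, together with the fact that commuting $\Z$-conjugates is a special case of commuting cyclic conjugates. One could equally phrase the argument by citing Example~\ref{ex:czc:action} verbatim, since a dissipator for $X_{i_0}$ is in particular an element $t$ with $X_{i_0} \cap t^p(X_{i_0}) = \emptyset$ for all $p \geq 1$.
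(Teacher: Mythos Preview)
Your proposal is correct and matches the paper's approach exactly: the corollary is stated without proof, being an immediate consequence of the remark after Example~\ref{ex:czc:action} (dissipated groups have commuting $\Z$-conjugates, hence commuting cyclic conjugates) together with Theorem~\ref{thm:ccc:bac}. Your write-up simply unpacks these two steps, including the directedness argument for bounding the support of a finitely generated subgroup, which is the only detail not made fully explicit in the paper.
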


Note that it is not known whether all binate groups are $\Xsep$-boundedly acyclic (Problem \ref{probl:binate}).

\subsubsection{Conjugates in commuting $\Z$-conjugate}\label{subsubsec:ccZc}

A similar condition to commuting $\Z$-conjugates was previously introduced by Monod~\cite[Corollary~5]{monod:thompson}.\ For simplicity of notation, we give it a name here:

\begin{defi}[Conjugates in commuting $\Z$-conjugate]
\label{def:M}
The group $\Gamma$ has \emph{conjugates in commuting $\Z$-conjugate} (abbreviated $\monod$) if there exists a subgroup $\Lambda \leq \Gamma$ and an element $t \in \Gamma$ such that:
\begin{enumerate}
    \item $[\Lambda, {}^{t^p} \Lambda] = 1$ for all $p \geq 1$;
    \item Every finite subset of $\Gamma$ is contained in a conjugate of $\Lambda$.
\end{enumerate}
\end{defi}

\begin{example}[Conjugates in commuting $\Z$-conjugate]
\label{ex:M:action}
Let $X$ be a set with bounded subsets $(X_i)_{i \in I}$, and let $\Gamma$ be a boundedly supported group of bijections of $X$ such that there exists $i_0 \in I$ and $t_0 \in \Gamma$ such that:
\begin{enumerate}
    \item $X_{i_0} \cap t_0^p (X_{i_0}) =  \emptyset$ for all $p \geq 1$;
    \item For all $j$ there exists $s_j\in \Gamma$ such that $X_j \subset s_j (X_{i_0})$.
\end{enumerate}
Then $\Gamma$ has conjugates in commuting $\Z$-conjugate, as displayed in Figure \ref{fig:cczc}.
\end{example}

\begin{figure}
\centering
\def\svgscale{0.3}
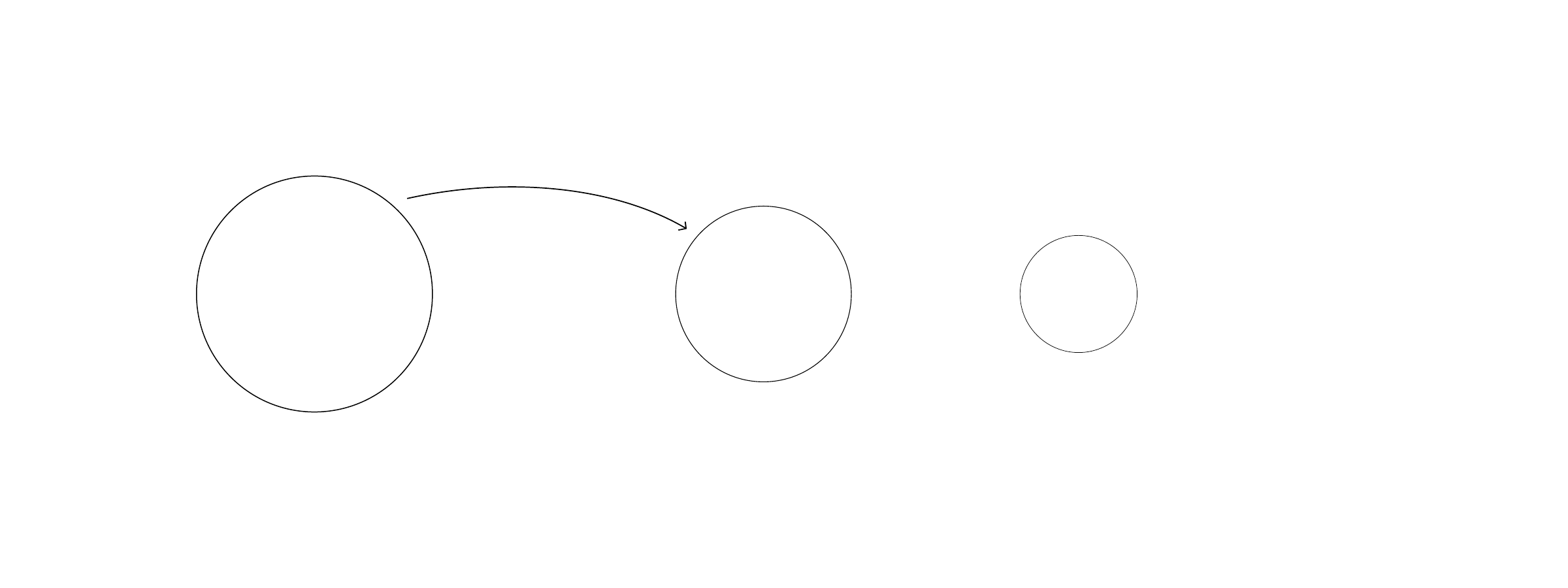
\caption{\emph{Ad Example \ref{ex:M:action}:}\ $\Lambda$ is a subgroup of $\Gamma$, supported on the ball on the left, and $t$ is an element that displaces this set, supported on the smallest rectangle. Given a finite subset of $\Gamma$, the union of the supports of its elements is contained in the middle rectangle, which is the image of the support of $\Lambda$ by an element $s$. The element $s$ is in turn supported on the largest rectangle.}
\label{fig:cczc}
\end{figure}

Also groups having conjugates in commuting $\Z$-conjugate have vanishing bounded cohomology for all separable coefficients:

\begin{thm}[$\monod$ implies $\Xsep$-bounded acyclicity~\cite{monod:thompson}]
\label{thm:M:bac}
Let $\Gamma$ be a group with conjugates in commuting $\Z$-conjugate.\ Then $\Gamma$ is $\Xsep$-boundedly acyclic.
\end{thm}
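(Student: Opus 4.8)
The plan is to reduce everything to Theorem~\ref{thm:ccc:bac} by showing that the property $\monod$ implies commuting $\Z$-conjugates (Definition~\ref{def:czc}). This is worthwhile in its own right, since it is exactly one of the inclusions asserted in Figure~\ref{fig:vd-czc}; and once it is established, the conclusion is immediate, because commuting $\Z$-conjugates is the special case $n = \infty$ of commuting cyclic conjugates (Definition~\ref{def:ccc}): reading $t^\infty = 1$, the requirement $[H, t^n] = 1$ becomes vacuous and one is left with $[H, {}^{t^p}H] = 1$ for all $p \geq 1$.

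So suppose $\Gamma$ has conjugates in commuting $\Z$-conjugate, witnessed by a subgroup $\Lambda \leq \Gamma$ and an element $t \in \Gamma$ as in Definition~\ref{def:M}, and let $H \leq \Gamma$ be finitely generated, say $H = \langle F \rangle$ with $F$ finite. First I would use property~(2) to find $g \in \Gamma$ with $F \subseteq {}^g\Lambda$, so that $H \leq {}^g\Lambda$. I then claim that $s \coloneqq {}^g t$ works, i.e.\ $[H, {}^{s^p}H] = 1$ for every $p \geq 1$. The key step is the bookkeeping identity: since $s^p = g t^p g^{-1}$, one has ${}^{s^p}({}^g\Lambda) = g\, t^p \Lambda\, t^{-p}\, g^{-1} = {}^g({}^{t^p}\Lambda)$; conjugating property~(1) by $g$ then yields $[{}^g\Lambda, {}^{s^p}({}^g\Lambda)] = {}^g[\Lambda, {}^{t^p}\Lambda] = 1$. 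Since $H \leq {}^g\Lambda$ and ${}^{s^p}H \leq {}^{s^p}({}^g\Lambda)$, it follows that $[H, {}^{s^p}H] = 1$, as wanted.

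Having shown that $\Gamma$ has commuting $\Z$-conjugates, hence commuting cyclic conjugates, I would conclude by invoking Theorem~\ref{thm:ccc:bac}. I do not expect a genuine obstacle here: the only point to be careful about is that the displacing element $s = {}^g t$ attached to $H$ depends on the conjugator $g$ (and hence on $H$), which is allowed by Definition~\ref{def:czc} but not by the more rigid formulation of $\monod$ — this asymmetry is what separates $\monod$ from commuting $\Z$-conjugates in Figure~\ref{fig:vd-czc}. (Monod's original argument in \cite{monod:thompson} predates the commuting cyclic conjugates formalism and is more self-contained; the proof sketched here simply records that it is subsumed by Theorem~\ref{thm:ccc:bac}.)
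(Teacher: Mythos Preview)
Your proposal is correct and matches the paper's approach exactly: the paper deduces Theorem~\ref{thm:M:bac} from Theorem~\ref{thm:ccc:bac} via Lemma~\ref{lem:M:implies:czc}, whose proof is the same conjugation computation you give (with $s$ and $t_0$ in place of your $g$ and $t$). Your remark that Monod's original argument is more self-contained is also apt, since the paper cites \cite{monod:thompson} for the theorem itself and presents the reduction via commuting $\Z$-conjugates as an alternative route.
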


This result is in fact a consequence of the last unjustified inclusion in Figure \ref{fig:vd-czc}, which we prove here:

\begin{lemma}[$\monod$ implies c.\ $\Z$ c.]
\label{lem:M:implies:czc}
Let $\Gamma$ be a group with conjugates in commuting $\Z$-conjugate, then $\Gamma$ also has commuting $\mathbb{Z}$-conjugates.
\end{lemma}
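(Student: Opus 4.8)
The plan is to transport the commuting $\Z$-conjugate structure carried by $\Lambda$ over to an arbitrary finitely generated subgroup of $\Gamma$ by means of a single conjugation, keeping track of how the shifting element changes under that conjugation.

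First I would take a finitely generated subgroup $H \leq \Gamma$ and fix a finite generating set $S$ of $H$. By condition (2) in Definition~\ref{def:M}, the finite set $S$ is contained in some conjugate ${}^s\Lambda = s\Lambda s^{-1}$ of $\Lambda$, with $s \in \Gamma$; since ${}^s\Lambda$ is a subgroup and contains $S$, we conclude $H \leq {}^s\Lambda$. Next I would set $K \coloneqq {}^{s^{-1}}H \leq \Lambda$ and $u \coloneqq {}^s t = s t s^{-1}$. Because $K \leq \Lambda$, condition (1) gives $[K, {}^{t^p}K] \leq [\Lambda, {}^{t^p}\Lambda] = 1$ for every $p \geq 1$. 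Conjugating this identity by $s$, and using $u^p = s t^p s^{-1} = {}^s(t^p)$ together with ${}^sK = H$, one gets ${}^{u^p}H = {}^s\bigl({}^{t^p}K\bigr)$, hence
\[
[H, {}^{u^p}H] = \bigl[{}^sK, {}^s({}^{t^p}K)\bigr] = {}^s\bigl[K, {}^{t^p}K\bigr] = 1
\]
for all $p \geq 1$. Thus $u$ witnesses the property in Definition~\ref{def:czc} for $H$, and since $H$ was arbitrary, $\Gamma$ has commuting $\Z$-conjugates.

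I do not expect a genuine obstacle in this argument: the whole content is unwinding the two definitions. The only points that warrant a line of care are the conjugation bookkeeping, namely that taking powers commutes with conjugation in the sense $({}^st)^p = {}^s(t^p)$, and that simultaneously conjugating two subgroups preserves the relation "$[\,\cdot\,,\,\cdot\,]=1$", i.e. $[{}^sA, {}^sB] = {}^s[A,B]$. Both are immediate from the definition ${}^tg = tgt^{-1}$. Combined with Theorem~\ref{thm:ccc:bac} (taking $n = \infty$), this also re-proves Theorem~\ref{thm:M:bac}.
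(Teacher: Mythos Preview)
Your argument is correct and matches the paper's proof essentially verbatim: conjugate $H$ into $\Lambda$ via some $s$, replace the shifting element $t$ by ${}^s t$, and observe that the commutation relations are preserved under conjugation by $s$. The only cosmetic difference is that you name the intermediate subgroup $K = {}^{s^{-1}}H$, whereas the paper works directly with the inclusion $H \leq {}^s\Lambda$.
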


\begin{proof}
Let $H \leq \Gamma$ be finitely generated.\ Then there exists $s \in \Gamma$ such that $H \leq {}^s \Lambda$.\ Let $t_0$ be as in the definition of $\monod$ and let $t := {}^{s} t_0$. Then for all $p \geq 1$ we have
\begin{align*}
    [H, {}^{t^p} H] &= [H, (s t_0^p s^{-1}) H (s t_0^{-p} s^{-1})] \\
    &\leq [s \Lambda s^{-1}, (s t_0^p s^{-1}) s \Lambda s^{-1} (s t_0^{-p} s^{-1})] \\
    &= s [\Lambda, {}^{t_0^p} \Lambda] s^{-1} = 1.\qedhere
\end{align*}
\end{proof}

This concludes the proof of all the inclusions in Figures \ref{fig:vd-ccc}, \ref{fig:vd-czc} and \ref{fig:vd-binate}.\ In the next section we will discuss the counterexamples witnessing strict inclusions, and we will exhibit groups lying in the intersections.

\section{Interactions between the properties}\label{sec:counterexample}

In the previous section, we have shown that all of the inclusions from the diagrams in Figures \ref{fig:vd-ccc}, \ref{fig:vd-czc} and \ref{fig:vd-binate} hold. In this section, we show that every non-empty area in the diagram does indeed contain a group.

\subsection{Correctness of Figure \ref{fig:vd-ccc}}
\label{ss:vd-ccc}

We will exhibit the following examples:

\begin{enumerate}
    \item A group with commuting conjugates that does not have commuting cyclic conjugates (Proposition \ref{prop:cc:not:ccc}).
    \item A group with commuting cyclic conjugates that does not have commuting $\Z/n$-conjugates, nor commuting $\Z$-conjugates (Proposition \ref{prop:ccc:cznc}).
    \item A group with commuting $\Z/n$-conjugates that does not have commuting $\Z$-conjugates (Proposition \ref{prop:ccc:cznc}).
    \item A group with commuting $\Z$-conjugates that does not have commuting $\Z/n$-conjugates for any $n$ (Proposition \ref{prop:czc:not:cznc}).
    \item A group which has commuting $\Z$-conjugates and commuting $\Z/n$-conjugates (Example \ref{ex:homeor2}).
\end{enumerate}

The last item is the most straightforward:

\begin{example}[c.\ $\Z$ c.\ and c.\ $\Z/n$ c.]
\label{ex:homeor2}
    Consider the group $\homeo(\R^2)$.\ It has commuting $\Z$-conjugates \cite[Corollary 5.19]{ccc}, as well as commuting $\Z/2$-conjugates \cite[Corollary 5.11 and its proof]{ccc}.\ It is straightforward to adapt the proof to show that it has commuting $\Z/n$-conjugates for every $n \geq 2$.
\end{example}

We proceed with a general construction that allows to treat the second and third items of the previous list, and moreover shows that the parameter $n$ in the definition of commuting $\Z/n$-conjugates does indeed make a difference.

We fix a sequence $\textbf{n} = (n_1, n_2, \ldots)$, where each $n_i \in \mathbb{Z}_{\geq 2}$.\ Set $\Gamma_0$ to be an arbitrary finitely generated non-abelian group, and define by induction $\Gamma_{i+1} \coloneqq \Gamma_i \wr \Z/n_{i+1}$.\
Recall that this denotes the standard restricted wreath product $\oplus_{n_{i+1}} \Gamma_i \rtimes \Z/n_{i+1}$.\
The standard embeddings $\Gamma_i \to \Gamma_{i+1}$ make this into a directed system of groups, and we denote by $\Gamma_{\mathbf{n}}$ their directed union.

\begin{lemma}
\label{lem:construction:cznc}
    For every prime $p$, the group $\Gamma_{\mathbf{n}}$ has commuting $\Z/p$-conjugates if and only if $p$ divides $n_i$ for infinitely many indices $i$.\ In all cases, $\Gamma_{\mathbf{n}}$ has commuting cyclic conjugates.
\end{lemma}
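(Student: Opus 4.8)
The plan is to analyze the structure of $\Gamma_{\mathbf n}$ as a directed union and exploit the fact that finitely generated subgroups are already contained in some $\Gamma_i$. The key structural observation is this: inside $\Gamma_{i+1} = \Gamma_i \wr \Z/n_{i+1} = \bigl(\bigoplus_{k \in \Z/n_{i+1}} \Gamma_i^{(k)}\bigr) \rtimes \Z/n_{i+1}$, the copy $\Gamma_i = \Gamma_i^{(0)}$ sitting in the zeroth coordinate is displaced by the generator $c = c_{i+1}$ of the cyclic group $\Z/n_{i+1}$: conjugating by $c^p$ moves $\Gamma_i^{(0)}$ to $\Gamma_i^{(p)}$, and distinct coordinates of the base of a wreath product commute. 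Moreover $c^{n_{i+1}} = 1$. So for any finitely generated $H \le \Gamma_i$, taking $t = c_{i+1} \in \Gamma_{i+1} \le \Gamma_{\mathbf n}$ gives $[H, {}^{t^p}H] = 1$ for $1 \le p < n_{i+1}$ and $t^{n_{i+1}} = 1$. This immediately shows $\Gamma_{\mathbf n}$ has commuting cyclic conjugates (with the integer $n$ in Definition~\ref{def:ccc} equal to $n_{i+1}$), proving the last sentence of the statement.

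For the commuting $\Z/p$-conjugates claim, one direction is now easy: if $p \mid n_i$ for infinitely many $i$, then given finitely generated $H$, choose $i$ large enough that $H \le \Gamma_{i-1}$ \emph{and} $p \mid n_i$; inside $\Gamma_i = \Gamma_{i-1} \wr \Z/n_i$ pick $t$ to be the element $c_i^{n_i/p}$, which has order exactly $p$, still displaces the zeroth coordinate to coordinates $(n_i/p)\cdot p' \bmod n_i$ for $1 \le p' < p$ — these are $p-1$ distinct nonzero coordinates — so $[H, {}^{t^{p'}}H] = 1$ for $1 \le p' < p$ and $t^p = 1$. That is exactly commuting $\Z/p$-conjugates.

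The converse — if $p$ divides only finitely many $n_i$ then $\Gamma_{\mathbf n}$ does \emph{not} have commuting $\Z/p$-conjugates — is the main obstacle and needs a genuine obstruction. The idea is to start with $H_0 \le \Gamma_0$ a two-generator non-abelian subgroup (possible since $\Gamma_0$ is finitely generated non-abelian) and show no $t \in \Gamma_{\mathbf n}$ can satisfy $[H_0, {}^{t^{p'}}H_0] = 1$ for $1 \le p' < p$ together with $t^p = 1$. Such a $t$ lies in some $\Gamma_j$; one analyzes the centralizer structure in the iterated wreath product. The crucial point is that an element $t$ of order dividing $p$ in $\Gamma_j$, under the hypothesis that $p \nmid n_i$ for $i$ in the range $(0,j]$, is forced (by inductively examining how order-$p$ elements sit in $\Gamma_i \wr \Z/n_i$ when $p \nmid n_i$: the projection to $\Z/n_i$ must be trivial, so $t$ lies in the base $\bigoplus \Gamma_{i-1}$, and an order-$p$ element there has order-$p$ components) to have all its conjugation action on $H_0$ happening "within a single copy" of $\Gamma_0$ in a way that cannot both displace $H_0$ off itself and have the conjugates of $H_0$ by the various $t^{p'}$ mutually commute while $H_0$ is non-abelian — more precisely, if $H_0$ and ${}^t H_0$ commute and generate a subgroup in which the $p-1$ conjugates plus $H_0$ itself are "independent", a counting/support argument in the finitely many wreath layers that matter shows this forces either $p \mid n_i$ for some relevant $i$ or $[H_0,H_0]=1$, a contradiction. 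I expect the bookkeeping here — tracking supports across the at most $j$ wreath layers and using non-abelianness of $\Gamma_0$ at the bottom — to be the technical heart of the argument; the cleanest route is probably to induct on $j$ and reduce to the one-layer statement: in $G \wr \Z/m$ with $p \nmid m$ and $G$ any group, if $t$ has order dividing $p$ and $L \le G \le G\wr\Z/m$ (zeroth copy) satisfies $[L,{}^{t}L]=1$, then ${}^tL$ and $L$ in fact lie in a common copy of $G$, forcing $L$ abelian when one iterates down to $\Gamma_0$.
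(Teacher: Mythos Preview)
Your forward direction and the ``always has commuting cyclic conjugates'' claim are correct and match the paper's argument essentially verbatim: for $H \le \Gamma_i$ one takes an appropriate power of the generator of $\Z/n_{i+1}$ (or of $\Z/n_j$ for a suitable $j > i$ with $p \mid n_j$).

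The converse, however, has two genuine gaps.

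First, you assume the witness satisfies $t^p = 1$, but Definition~\ref{def:cnc} only requires $[H, t^p] = 1$, i.e.\ that $t^p$ \emph{centralize} $H$. There is no reason for a $t$ whose $p$-th power centralizes $H_0$ to have order dividing $p$, so your ``order-$p$ element'' analysis simply does not cover all candidate witnesses. The paper handles the correct condition directly: writing $t = s^k b \in \Gamma_j = \Gamma_{j-1} \wr \Z/n_j$ with $k \neq 0$, one observes that if $s^{kp} \neq 1$ then conjugation by $t^p$ sends the zeroth copy of $\Gamma_{j-1}$ to a different copy, so $t^p$ cannot centralize $\Gamma_i \le \Gamma_{j-1}$; hence $n_j \mid kp$, and primality of $p$ together with $0 < k < n_j$ forces $p \mid n_j$.

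Second, your test subgroup is a proper two-generated non-abelian $H_0 \le \Gamma_0$, and your induction is supposed to terminate with some $t_0 \in \Gamma_0$ (of order dividing $p$) satisfying $[H_0, {}^{t_0^{p'}}H_0] = 1$ for $1 \le p' < p$, ``forcing $H_0$ abelian''. That implication is false for arbitrary $\Gamma_0$: take $\Gamma_0 = F_2 \wr \Z/p$, $H_0$ the zeroth copy of $F_2$, and $t_0$ the generator of $\Z/p$; then $t_0^p = 1$ and all the commutation conditions hold, yet $H_0$ is free non-abelian. The paper avoids this by taking the test subgroup to be $\Gamma_i$ itself (for \emph{every} $i$), and replacing your downward induction by a minimality argument: choose $t$ in the smallest $\Gamma_j$; if $k = 0$ then the zeroth coordinate of $b$ is already a witness lying in $\Gamma_{j-1}$, contradicting minimality, so $k \neq 0$ and the previous paragraph applies. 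Taking $H = \Gamma_i$ for each $i$ (and obtaining $j > i$ each time) also automatically yields infinitely many $j$ with $p \mid n_j$, disposing of the finitely many bad indices that your sketch, which assumes $p \nmid n_i$ throughout $(0, j]$, does not address.
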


\begin{proof}
    First, suppose that $p$ divides $n_i$ for infinitely many indices $i$.\ Let $H \leq \Gamma$ be a finitely generated subgroup.\ Then $H \leq \Gamma_j$ for some $j$.\ By hypothesis there exists $i > j$ such that $p$ divides $n_i$, say $n_i = kp$ for some $1 \leq k < n_i$.\ We consider $\Gamma_i = \Gamma_{i-1} \wr \Z/n_i$, and let $t$ denote the generator of $\Z/n_i$.\ We claim that $t^k$ satisfies the conditions in the definition of commuting $\Z/p$-conjugates for $H$.\ First, for all $1 \leq q < p$ we have $[H, {}^{t^{kq}} H] \leq [\Gamma_{i-1}, {}^{t^{kq}} \Gamma_{i-1}] = 1$, since $1 \leq kq < kp=n_i$.\ Secondly, $t^{kp} = t^{n_i} = 1$, and so clearly $[H, t^{kp}] = 1$.\ This shows that $\Gamma_{\mathbf{n}}$ has commuting $\Z/p$-conjugates.

    Conversely, suppose that $\Gamma_{\mathbf{n}}$ has commuting $\Z/p$-conjugates.\ We will show that, for all $i$, there exists $j > i$ such that $p$ divides $n_j$.\ Since $\Gamma_i$ is finitely generated, there exists $t \in \Gamma_{\mathbf{n}}$ that satisfies the conditions in the definition of commuting $\Z/p$-conjugates for $\Gamma_i$.\ Let $j$ be such that $t \in \Gamma_j$; we pick $t$ so that $j$ is minimal. Since $\Gamma_i$ is not abelian, $t$ cannot belong to $\Gamma_i$, otherwise $[\Gamma_i, {}^t \Gamma_i] = [\Gamma_i, \Gamma_i] \neq 1$. This shows that $j > i$.\ We claim that $p$ divides $n_j$.

    We consider the group $\Gamma_j = \Gamma_{j-1} \wr \Z/n_j$, and notice that $\Gamma_i \leq \Gamma_{j-1}$.\ The element $t$ can be written uniquely as $t = s^k b$, where $b \in \Gamma_{j-1}^{n_j}$, $s$ is the generator of $\Z/n_j$ and $0 \leq k < n_j$.\ If $k = 0$, then the action of $t = b$ by conjugacy on $\Gamma_{j-1}$ is the same as the action of the $0$-th component of $b$, that we call $t'$.\ Since $\Gamma_i \leq \Gamma_{j-1}$, the element $t'$ satisfies the conditions in the definition of commuting $\Z/p$-conjugates for $\Gamma_i$, and $t' \in \Gamma_{j-1}$, which contradicts the choice of $t$ with minimal $j$.\
    It follows that $k \neq 0$.\ Now $t^p$ can be written uniquely as $s^{kp} b'$, where again $b' \in \Gamma_{j-1}^{n_j}$. If $s^{kp} \neq 1$, then ${}^{t^p} \Gamma_{j-1} = {}^{s^{kp}} \Gamma_{j-1}$, which is a different copy of $\Gamma_{j-1}$, and thus has trivial intersection with it.\ It follows that also ${}^{t^p} \Gamma_i$ has trivial intersection with $\Gamma_i$; therefore $[\Gamma_i, t^p] \neq 1$. So $s^{kp} = 1$, which implies that $p$ is not a unit modulo $n_j$, and since $p$ is prime, it must divide $n_j$.

    We have shown that, if $\Gamma_{\mathbf{n}}$ has commuting $\Z/p$-conjugates, then for all $i$ there exists $j > i$ such that $p$ divides $n_j$, i.e.\ $p$ divides infinitely many $n_j$, which concludes the proof.

    The fact that $\Gamma_{\mathbf{n}}$ has commuting cyclic conjugates follows directly from the construction: Every finitely generated subgroup $H$ is contained in some $\Gamma_i$, and we can choose $n = n_{i+1}$ and $t$ to be the generator of $\Z/n_{i+1} \leq \Gamma_{i+1}$.
\end{proof}

This allows to provide the following examples:

\begin{prop}[c.\ c.\ c.\ vs.\ c.\ $\Z\slash n$ c.\ and c.\ $\Z$ c.]
\label{prop:ccc:cznc}
    There exist groups with the following properties:
    \begin{itemize}
        \item For every non-empty set of primes $\mathcal{P}$ there exists a group that has commuting $\Z/p$-conjugates if and only if $p \in \mathcal{P}$;
        \item There exists a group that has commuting cyclic conjugates, but not commuting $\Z/n$-conjugates for any $n \geq 2$;
        \item Both cases can be chosen not to have commuting $\Z$-conjugates.
    \end{itemize}
\end{prop}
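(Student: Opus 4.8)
The plan is to obtain all three items from Lemma~\ref{lem:construction:cznc} by choosing the sequence $\mathbf n$ suitably, together with two auxiliary facts. The first is an elementary reduction: for $n \geq 2$, having commuting $\Z/n$-conjugates implies having commuting $\Z/p$-conjugates for every prime $p \mid n$. Indeed, if $t$ witnesses commuting $\Z/n$-conjugates for a finitely generated $H$, then $t^{n/p}$ witnesses commuting $\Z/p$-conjugates for $H$: for $1 \leq q < p$ one has $1 \leq (n/p)q < n$, so $[H, {}^{t^{(n/p)q}} H] = 1$, while $(t^{n/p})^{p} = t^{n}$ commutes with $H$. Consequently, if $\Gamma_{\mathbf n}$ fails to have commuting $\Z/p$-conjugates for every prime $p$, then it has no commuting $\Z/n$-conjugates for any $n \geq 2$.

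For the first item, given a non-empty set of primes $\mathcal P$, I would fix an enumeration $\mathcal P = \{p_1, p_2, \dots\}$ (finite or countably infinite) and a surjection $\sigma$ from $\Z_{\geq 1}$ onto the index set of this enumeration all of whose fibres are infinite, and set $n_i := p_{\sigma(i)} \geq 2$. Since each $n_i$ is itself a prime lying in $\mathcal P$, a prime $p$ divides $n_i$ for infinitely many $i$ exactly when $p = p_k$ for some $k$ with $\sigma^{-1}(k)$ infinite, i.e.\ exactly when $p \in \mathcal P$. Lemma~\ref{lem:construction:cznc} then says precisely that $\Gamma_{\mathbf n}$ has commuting $\Z/p$-conjugates if and only if $p \in \mathcal P$, and that $\Gamma_{\mathbf n}$ has commuting cyclic conjugates. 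For the second item I would instead take $n_i$ to be the $i$-th prime: then every prime divides at most one $n_i$, so by Lemma~\ref{lem:construction:cznc} the group $\Gamma_{\mathbf n}$ has no commuting $\Z/p$-conjugates for any prime $p$, hence (by the reduction above) no commuting $\Z/n$-conjugates for any $n \geq 2$, while it still has commuting cyclic conjugates.

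The third item will follow from the stronger claim that $\Gamma_{\mathbf n}$ never has commuting $\Z$-conjugates, regardless of $\mathbf n$, and this is where the structure of the iterated wreath products enters. For each $j$ the group $\Gamma_j$ has the ``base'' normal subgroup $B_j$, an internal direct product of $N_j := n_1 \cdots n_j$ pairwise commuting copies of $\Gamma_0$, with $\Gamma_j / B_j$ finite (it is the iterated wreath product of the groups $\Z/n_i$); moreover, following the standard embeddings $\Gamma_i \hookrightarrow \Gamma_{i+1}$, which always land in the $0$-th summand of the base, the distinguished copy $\Gamma_0 \leq \Gamma_j$ is one of the direct factors of $B_j$. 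Hence conjugation by any $b \in B_j$ preserves $\Gamma_0$ setwise, since the components of $b$ in the other factors commute with $\Gamma_0$ and its remaining component lies in $\Gamma_0$. Now take $H = \Gamma_0$, which is non-abelian by hypothesis, and suppose $t \in \Gamma_{\mathbf n}$ satisfied $[\Gamma_0, {}^{t^p} \Gamma_0] = 1$ for all $p \geq 1$. Choose $j$ with $t \in \Gamma_j$ and let $r \geq 1$ be the order of $t B_j$ in the finite group $\Gamma_j / B_j$, so that $t^{r} \in B_j$; then ${}^{t^{r}}\Gamma_0 = \Gamma_0$ and therefore $[\Gamma_0, {}^{t^{r}}\Gamma_0] = [\Gamma_0, \Gamma_0] \neq 1$, a contradiction. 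So $\Gamma_{\mathbf n}$ has no commuting $\Z$-conjugates, and together with the two preceding paragraphs this proves all three items.

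The main obstacle is this last step: pinning down the normal subgroup $B_j$ and verifying that the distinguished $\Gamma_0$ is genuinely a direct factor of it --- equivalently, that every element of $\Gamma_{\mathbf n}$ has a power normalizing $\Gamma_0$ --- which is the precise form of the ``a non-abelian piece always survives the shifting'' obstruction to commuting $\Z$-conjugates. The remaining arguments are routine once Lemma~\ref{lem:construction:cznc} is in hand.
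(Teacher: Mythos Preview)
Your proof is correct, and for the first two items it follows the same template as the paper (choose $\mathbf n$ so that the divisibility criterion of Lemma~\ref{lem:construction:cznc} cuts out exactly the desired set of primes, then pass from primes to all $n$ via the $\Z/n \Rightarrow \Z/p$ reduction, which the paper also invokes). Your sequence $n_i = p_{\sigma(i)}$ is a harmless variant of the paper's $n_i = \prod_{j<i} p_j$.

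The genuine difference is in the third item. The paper does not prove that $\Gamma_{\mathbf n}$ always fails commuting $\Z$-conjugates; instead it specializes the seed group $\Gamma_0$ to be a finitely generated non-abelian \emph{torsion} group, observes that each $\Gamma_i$ and hence $\Gamma_{\mathbf n}$ is then torsion, and notes that a non-abelian torsion group can never have commuting $\Z$-conjugates (take $p$ equal to the order of $t$). Your argument is more structural and more general: you identify the iterated base $B_j \trianglelefteq \Gamma_j$ as a finite direct power of $\Gamma_0$ with finite index, note that the distinguished $\Gamma_0$ is one of its direct factors and hence normalized by $B_j$, and conclude that some positive power of any $t$ normalizes $\Gamma_0$. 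This shows that $\Gamma_{\mathbf n}$ lacks commuting $\Z$-conjugates for \emph{every} choice of (non-abelian, finitely generated) $\Gamma_0$, not only torsion ones. The paper's route is shorter and yields the useful aside that non-abelian torsion groups never have commuting $\Z$-conjugates; yours avoids the extra hypothesis on $\Gamma_0$ and makes explicit the ``virtually-base'' mechanism that really drives the obstruction.
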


\begin{proof}
    We enumerate $\mathcal{P}$ as $p_1 < p_2 < \dots$, and set $n_i \coloneqq \prod_{j<i} p_j$.\ By Lemma \ref{lem:construction:cznc}, the resulting group $\Gamma_{\mathbf{n}}$ has commuting $\Z/p$-conjugates if and only if $p$ divides $n_i$ for infinitely many $i$, which happens exactly when $p = p_i$ for some $i$.

    If instead we take $n_i$ to be an increasing sequence of primes, then by Lemma \ref{lem:construction:cznc} the resulting group $\Gamma_{\mathbf{n}}$ does not have commuting $\Z/p$-conjugates for any prime $p$.\ Note that if a group has commuting $\Z/n$-conjugates, then it has commuting $\Z/m$-conjugates for every divisor $m$ of $n$, and so the more general statement follows.

    For the last item, we claim that if we start the construction with a finitely generated non-abelian \emph{torsion} group $\Gamma_0$, then the resulting groups $\Gamma_{\mathbf{n}}$ will not have commuting $\Z$-conjugates.\ By induction, each $\Gamma_i$ is torsion, and therefore so is $\Gamma_{\mathbf{n}}$.\ Suppose by contradiction that there exists $t \in \Gamma_{\mathbf{n}}$ such that $[\Gamma_0, {}^{t^p} \Gamma_0] = 1$ for all $p \geq 1$.\ Since $\Gamma_{\mathbf{n}}$ is torsion, taking $p$ to be the order of $t$ we obtain $[\Gamma_0, \Gamma_0] = 1$, which contradicts the choice of $\Gamma_0$ as non-abelian.
\end{proof}

\begin{rem}[Non-abelian torsion groups do not have c.\ $\Z$ c.]\label{rem:torsion:grps:no:cZc}
    The previous argument shows that in fact non-abelian torsion groups cannot have commuting $\Z$-conjugates.
\end{rem}

In particular, the previous proposition shows examples of groups with commuting $\Z/2$-conjugates that do not have commuting $\Z$-conjugates; we will see more naturally occurring examples in the next section. Next, we show:

\begin{prop}[c.\ $\Z$ c.\ not c.\ $\Z/n$ c.]
\label{prop:czc:not:cznc}
    The group $\homeo(\R)$ has commuting $\Z$-conjugates but does not have commuting $\Z/n$-conjugates for any $n \geq 2$.
\end{prop}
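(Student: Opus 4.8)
The plan is to handle the two assertions separately. For the positive statement, that $\homeo(\R)$ has commuting $\Z$-conjugates, I would invoke Example~\ref{ex:czc:action}: view $\homeo(\R)$ as a boundedly supported group of bijections of $\R$, with bounded sets the compact intervals $X_i = [-i, i]$. Given a finitely generated subgroup $H$, all of its generators are compactly supported, so $H$ is supported on some $[-N, N]$. It then suffices to exhibit $t \in \homeo(\R)$ with $[-N,N] \cap t^p([-N,N]) = \emptyset$ for all $p \geq 1$; any orientation-preserving homeomorphism that is a ``translation by $2N+1$'' on a large enough interval works, since then $t^p([-N,N]) \subseteq [-N + p(2N+1), N + p(2N+1)]$, which is disjoint from $[-N,N]$. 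Hence $\homeo(\R)$ has commuting $\Z$-conjugates.

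For the negative statement, that $\homeo(\R)$ does not have commuting $\Z/n$-conjugates for any $n \geq 2$, I would argue by contradiction. Suppose $t \in \homeo(\R)$ witnesses commuting $\Z/n$-conjugates for some finitely generated non-abelian $H \leq \homeo(\R)$ — note such $H$ exists inside, say, $\homeo_c(0,1)$. The condition $[H, t^n] = 1$ forces $t^n$ to centralize $H$; choosing $H$ whose centralizer in $\homeo(\R)$ is trivial (for instance $H$ acting ``richly'' on a subinterval, so that no nontrivial homeomorphism commutes with all of $H$) we get $t^n = 1$. But $\homeo(\R)$ is torsion-free: a nontrivial homeomorphism of $\R$ of finite order would be an orientation-preserving finite-order homeomorphism (orientation-reversing ones have order $2$ but a fixed point and are conjugate to $x \mapsto -x$, which does not have finite order issues — wait, $x\mapsto -x$ does have order $2$), so I need to be slightly careful: orientation-reversing homeomorphisms of $\R$ can have order $2$. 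However, such an involution $\sigma$ has a unique fixed point $x_0$ and is strictly decreasing, hence its centralizer is quite constrained; more to the point I should simply choose $H$ supported in a half-line, say in $\homeo_c(1, \infty)$ and non-abelian, so that $t^n$ commuting with $H$ means $t^n$ fixes $(1,\infty)$ pointwise or at least acts trivially there — and then, combined with $[H, {}^{t^p} H] = 1$, derive that the ${}^{t^p} H$ for $0 \le p < n$ have pairwise-disjoint (or commuting) supports inside $\R$, which is impossible for $n \ge 2$ once $H$ is ``wide enough'', OR directly: $t^n$ is an orientation-preserving finite-order homeomorphism of $\R$ (being the $n$-th power when $n\geq 2$... actually for $n$ odd $t^n$ could be orientation-reversing).

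Let me restructure the negative argument more robustly. The cleanest route: pick $H \leq \homeo_c(\R)$ finitely generated, non-abelian, with \emph{trivial centralizer in $\homeo(\R)$} — concretely, let $H$ act on an interval $[a,b]$ in a minimal-and-proximal fashion so that any homeomorphism of $\R$ commuting with every element of $H$ must fix $[a,b]$ pointwise and hence (since $H$'s support is exactly $[a,b]$, up to adjustment) be the identity. Then $[H, t^n] = 1 \Rightarrow t^n = \id$. Now if such a $t$ existed with $t \ne \id$, then $t$ has finite order dividing $n$. An orientation-preserving homeomorphism of $\R$ of finite order is the identity (it is monotone increasing, so $t^k(x) = x$ for some $k$ forces $t(x) = x$ for all $x$ by monotonicity of iterates). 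So $t$ is orientation-reversing, whence $n$ is even and $t^2 = \id$, $t$ is a decreasing involution with a unique fixed point $c$. But then ${}^{t} H$ is supported on $t([a,b])$, which lies on the opposite side of $c$ from $[a,b]$ unless $c \in [a,b]$; choosing the realization of $H$ so that its support does not contain $c$ (possible since we may first conjugate $H$ far from any prospective $c$, or rather: for the contradiction we just need that $[H,{}^tH]=1$ together with $t^2 = \id$ is impossible — but actually ${}^t H$ supported disjointly from $H$ would give $[H,{}^tH]=1$ for free, so this is not yet a contradiction). The genuine contradiction comes from $n \geq 3$: if $n$ is even but $n \geq 4$, we need $[H, {}^{t^2} H] = 1$ with $t^2 = \id$, i.e.\ $[H,H]=1$, contradicting non-abelianness; if $n = 2$, we need instead to rule out the decreasing involution case, which I would do by choosing $H$ so that it cannot commute with any of its conjugates by a decreasing involution — e.g.\ take $H$ acting on $[0,1]$ and note that for $[H, {}^t H] = 1$ with $t$ reversing orientation and fixed point $c$, we'd need either $c \notin (0,1)$ (giving ${}^tH$ disjoint from $H$, which is fine for commuting, so not a contradiction) hmm. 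So $n=2$ needs a different idea: I would additionally exploit that a commuting $\Z/2$-conjugate structure must work for \emph{every} finitely generated $H$, so I can enlarge $H$ to contain elements whose supports already ``fill up'' the relevant interval, forcing any $t$ with $X_i \cap t(X_i) = \emptyset$ to push $X_i$ off to infinity, which an involution with a finite fixed point cannot do. The main obstacle, then, is precisely this $n = 2$ orientation-reversing case; the rest is routine. I would resolve it by observing that for $H$ containing a homeomorphism supported on all of $[-M,M]$, the requirement $[H,{}^tH]=1$ plus $[H,t^2]=1$ forces $t([-M,M])\cap[-M,M]=\emptyset$ and $t^2$ to centralize (hence, with the right $H$, equal) $\id$; but a homeomorphism $t$ of $\R$ with $t([-M,M]) \cap [-M,M] = \emptyset$ and $t^2 = \id$ would have to satisfy $t^2([-M,M]) = [-M,M]$ yet also $t(t([-M,M]))$ lies on one fixed side of $t$'s fixed point and cannot return to $[-M,M]$ — contradiction. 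This pins down the argument.

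\medskip

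\noindent\emph{Remark on write-up.} Concretely I would write: ``\emph{Commuting $\Z$-conjugates:} This follows from Example~\ref{ex:czc:action} with $X = \R$ and $X_i = [-i,i]$, since for each $i$ the translation-like homeomorphism $t_i(x) = x + (2i+1)$ satisfies $X_i \cap t_i^p(X_i) = \emptyset$ for all $p \geq 1$. \emph{No commuting $\Z/n$-conjugates:} Suppose $\Gamma = \homeo(\R)$ had commuting $\Z/n$-conjugates for some $n \geq 2$. Fix $M > 0$ and a non-abelian finitely generated $H \leq \homeo_c(-M, M)$ containing an element $h_0$ with $\supp(h_0) = [-M,M]$; let $t$ be as in the definition. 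From $[H, {}^{t^p} H] = 1$ for $1 \le p < n$ and $[H, t^n] = 1$ one shows successively that $\supp(h_0) \cap t^p(\supp(h_0)) = \emptyset$ and that $t^n$ acts trivially on $(-M, M)$, hence (as $M$ was arbitrary) $t^n = \id$ after also letting $M \to \infty$; combined with the disjointness of the intervals $t^p([-M,M])$, $0 \le p < n$, this is impossible for $n \geq 2$.'' — filling in the elementary interval bookkeeping, and treating $n = 2$ by the fixed-point argument above.
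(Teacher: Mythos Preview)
There is a genuine gap in your negative direction. Your strategy hinges on choosing a finitely generated $H \leq \homeo(\R)$ with \emph{trivial centralizer}, so that $[H, t^n] = 1$ forces $t^n = \id$. But no such $H$ exists: every finitely generated $H \leq \homeo(\R)$ is supported in some compact interval $[a,b]$, and any element supported outside $[a,b]$ centralizes $H$. Your attempted patch ``let $M \to \infty$'' fails for the same quantifier reason: the definition of commuting $\Z/n$-conjugates only asserts that \emph{for each} finitely generated $H$ there exists \emph{some} $t$ (depending on $H$); you cannot fix $t$ and then enlarge $M$. Also note that since $\homeo(\R)$ here denotes compactly supported homeomorphisms, every element is orientation-preserving, so your entire digression on orientation-reversing involutions is moot --- and, for the same reason, your concrete witness $t_i(x) = x + (2i+1)$ for the positive direction does not lie in $\homeo(\R)$ (though this is easily repaired: the group is dissipated, so Example~\ref{ex:czc:action} applies with a compactly supported $t_i$ whose translates of $X_i$ shrink).

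The paper's argument avoids the centralizer problem by working \emph{locally}. Take $H$ to be Thompson's group $F$ acting minimally on $(0,1)$. From $[H, t^n] = 1$ one gets $t^n(0,1) = (0,1)$, hence $t^n(0) = 0$ and $t^n(1) = 1$; now since $t$ is orientation-preserving (being compactly supported), $t^n(0) = 0$ already forces $t(0) = 0$, and likewise $t(1) = 1$. Thus ${}^t H$ is again supported on $(0,1)$ and acts minimally there. Picking $h \in H$ whose only fixed point in $(0,1)$ is $1/2$, the commuting group ${}^t H$ must preserve $\fix(h)$, hence fix $1/2$, contradicting minimality. The key step you were missing is the implication $t^n(0) = 0 \Rightarrow t(0) = 0$, which lets you localize without ever needing $t^n$ to be globally trivial.
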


\begin{proof}
    The group $\homeo(\R)$ is dissipated and hence has commuting $\Z$-conjugates by Example \ref{ex:czc:action} (see also Example \ref{ex:dissipated:cczc} below). We show now that it does not have commuting $\Z/n$-conjugates for any $n \geq 2$.
    
    Let $H < \homeo(\R)$ be the copy of Thompson's group $F$ acting on $(0, 1)$ and fixing the rest of $\R$ pointwise.\ Note that $H$ is finitely generated and acts minimally on $(0, 1)$.\ If $H$ commutes with $t^n$, it follows in particular that $t^n(0, 1) = (0, 1)$. Since $t$ is compactly supported it preserves the orientation, and thus $t^n(0) = 0, t^n(1) = 1$.\  Similarly we deduce that $t(0) = 0$ and $t(1) = 1$.\ Thus ${}^{t} H$ is also supported on $(0, 1) = t(0, 1)$ and acts minimally on it.\ Let now $h \in H$ be an element whose only fixed point in $(0, 1)$ is $1/2$. Because ${}^{t} H$ commutes with $h$, it must preserve its fixed point set and thus fix $1/2$.\ This contradicts the minimality of the action of ${}^{t} H$.\ It follows that $H$ and ${}^{t} H$ do not commute.
\end{proof}

Note that this result is in striking contrast with the case of $\homeo(\R^2)$ discussed before in Example~\ref{ex:homeor2}.

Finally, we provide an example of a group with commuting conjugates that does not have commuting cyclic conjugates.

\begin{prop}[c.\ c.\ not c.\ c.\ c.]
\label{prop:cc:not:ccc}
    Let $\Gamma_1 \coloneqq \homeo(\R)$, and let $\Gamma_2$ be a group with commuting conjugates that does not have commuting $\Z$-conjugates. Then $\Gamma_1 \times \Gamma_2$ has commuting conjugates but does not have commuting cyclic conjugates.
\end{prop}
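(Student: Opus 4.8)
The plan is to verify the two claims separately: first that $\Gamma_1 \times \Gamma_2$ has commuting conjugates (the easy direction), and then that it does not have commuting cyclic conjugates (the main point). For the first claim, I would use that both factors have commuting conjugates. Given a finitely generated subgroup $H \leq \Gamma_1 \times \Gamma_2$, its images $H_1 \leq \Gamma_1$ and $H_2 \leq \Gamma_2$ under the two coordinate projections are finitely generated, so there exist $t_1 \in \Gamma_1$ and $t_2 \in \Gamma_2$ with $[H_i, {}^{t_i} H_i] = 1$ for $i = 1, 2$. Taking $t \coloneqq (t_1, t_2)$, and using $H \leq H_1 \times H_2$, we get $[H, {}^t H] \leq [H_1 \times H_2, {}^{t_1}H_1 \times {}^{t_2}H_2] = [H_1, {}^{t_1}H_1] \times [H_2, {}^{t_2}H_2] = 1$. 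So $\Gamma_1 \times \Gamma_2$ has commuting conjugates.

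For the second claim, suppose toward a contradiction that $\Gamma_1 \times \Gamma_2$ has commuting cyclic conjugates. The strategy is to choose a well-chosen finitely generated subgroup $H$ living entirely in the $\Gamma_2$-factor, and show that the witness $t = (t_1, t_2)$ and the associated $n \in \mathbb{Z}_{\geq 2} \cup \{\infty\}$ would force $\Gamma_2$ to have commuting $\mathbb{Z}$-conjugates. Concretely: pick inside $\Gamma_1 = \homeo(\R)$ a copy $K$ of Thompson's group $F$ acting minimally on $(0,1)$, exactly as in the proof of Proposition \ref{prop:czc:not:cznc}; pick inside $\Gamma_2$ an arbitrary finitely generated non-abelian subgroup $L$ (which exists since $\Gamma_2$, having commuting conjugates but not commuting $\Z$-conjugates, is in particular non-abelian). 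Let $H \coloneqq K \times L \leq \Gamma_1 \times \Gamma_2$, which is finitely generated. Write the c.c.c.\ witness as $t = (t_1, t_2)$ with $t_i \in \Gamma_i$, so that $[H, {}^{t^p} H] = 1$ for $1 \leq p < n$ and $[H, t^n] = 1$ (with $t^\infty = 1$). Projecting to $\Gamma_1$ gives $[K, {}^{t_1^p} K] = 1$ for $1 \leq p < n$ and $[K, t_1^n] = 1$; the argument of Proposition \ref{prop:czc:not:cznc} (the copy of $F$ acts minimally on $(0,1)$, so a commuting conjugate would have to fix the fixed point of a well-chosen $h \in K$, contradicting minimality) shows that ${}^{t_1^p}K$ cannot commute with $K$ for any $p \geq 1$. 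Hence the condition $[K, {}^{t_1^p}K] = 1$ for $1 \leq p < n$ forces $n$ to satisfy $n \leq 1$ in the finite case — impossible since $n \geq 2$ — so we must be in the case $n = \infty$. But then $t^\infty = 1$ means $t_1^\infty = 1$, which is vacuous, and the real content is $[H, {}^{t^p}H] = 1$ for \emph{all} $p \geq 1$. Projecting this to $\Gamma_2$ yields $[L, {}^{t_2^p} L] = 1$ for all $p \geq 1$.

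The final step is to upgrade this to: $\Gamma_2$ has commuting $\mathbb{Z}$-conjugates, contradicting the hypothesis. This is immediate: the subgroup $L \leq \Gamma_2$ was an \emph{arbitrary} finitely generated non-abelian subgroup, and for abelian finitely generated subgroups $H' \leq \Gamma_2$ one can simply take $t' = 1$ (then $[H', {}^{(t')^p} H'] = [H', H'] = 1$ for all $p$). Actually, to handle a general finitely generated $H' \leq \Gamma_2$ uniformly, run the argument above with $L$ replaced by $H'$: choose $H \coloneqq K \times H'$, get a c.c.c.\ witness $t = (t_1, t_2)$, conclude as before that $n = \infty$ (the $\Gamma_1$-projection argument did not use non-abelianity of $L$, only that of $K$), and read off $[H', {}^{t_2^p} H'] = 1$ for all $p \geq 1$. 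Thus every finitely generated subgroup of $\Gamma_2$ has a commuting $\mathbb{Z}$-conjugate, i.e.\ $\Gamma_2$ has commuting $\mathbb{Z}$-conjugates — contradiction.

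I expect the main obstacle to be purely bookkeeping: making sure the projection of the c.c.c.\ condition to each factor is handled correctly (in particular that the value of $n$ is shared across both coordinates since it comes from a single witness $t$ in the product), and cleanly extracting that the $\Gamma_1$-projection already rules out every finite $n$ so only $n = \infty$ survives, which is precisely what transfers the "commuting $\mathbb Z$-conjugates" condition down to $\Gamma_2$. No serious analytic or combinatorial difficulty should arise beyond reusing Proposition \ref{prop:czc:not:cznc}'s minimality trick.
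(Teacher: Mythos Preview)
Your argument is correct and follows essentially the same approach as the paper: use the $\Gamma_1$-factor together with the minimality trick from Proposition~\ref{prop:czc:not:cznc} to rule out every finite $n$, then use the $\Gamma_2$-factor to rule out $n = \infty$. The only difference is organizational---the paper fixes in advance a single finitely generated $H_2 \leq \Gamma_2$ witnessing the failure of commuting $\Z$-conjugates and derives a contradiction directly, while you quantify over all finitely generated $H' \leq \Gamma_2$ and deduce that $\Gamma_2$ would have commuting $\Z$-conjugates; these are contrapositives of one another.
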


For example, $\Gamma_2$ may be chosen as one of the groups in Proposition \ref{prop:ccc:cznc}.

\begin{proof}
    The group $\Gamma \coloneqq \Gamma_1 \times \Gamma_2$ has commuting conjugates because each factor does.

    Now the proof of Proposition \ref{prop:czc:not:cznc} shows that there exists a finitely generated subgroup $H_1 < \Gamma_1$, such that if $[H_1, t_1^n] = 1$ for some $n \geq 2$ and some $t_1 \in \Gamma_1$, then $[H_1, {}^{t_1} H_1] \neq 1$.\ The assumption on $\Gamma_2$ implies that there exists a finitely generated subgroup $H_2 < \Gamma_2$ such that for all $t_2 \in \Gamma_2$ there exists $p \geq 1$ such that $[H_2, {}^{t_2^p} H_2] \neq 1$.

    Suppose by contradiction that $\Gamma$ has commuting cyclic conjugates, and let $H \coloneqq H_1 \times H_2$. Since $\Gamma$ has commuting cyclic conjugates, there exists $t = (t_1, t_2) \in \Gamma$ and $n \in \Z_{\geq 2} \cup \{ \infty \}$ such that $[H_i, {}^{t_i^p} H_i] = 1$ for all $1 \leq p < n$, and $[H_i, t_i^n] = 1$, for $i=1, 2$. If $n < \infty$, the statement with $i = 1$ contradicts the property of $H_1$; if $n = \infty$, the statement with $i = 2$ contradicts the property of $H_2$.
\end{proof}

\subsection{More examples with commuting $\Z/2$-conjugates but not commuting $\Z$-conjugates}
\label{ss:GL}

We will focus on an example of great interest in $K$-theory and homological stability:\ The infinite general linear group $\GL(\R)$.\
The group~$\GL(\R)$ has commuting cyclic conjugates (in fact commuting $\Z/2$-conjugates)~\cite[Corollary 5.45 and its proof]{ccc}. 
%This situation showcases how important the notion of commuting cyclic conjugates is in order to obtain $\Xsep$-bounded acyclicity for classical groups, instead of restricting to the property of commuting $\Z$-conjugates.
On the other hand, we are going to show that the infinite general linear group~$\GL(\R)$ cannot have commuting $\Z$-conjugates. The proof is based on the fact that the displacing element in $\GL(\R)$ has to \emph{conserve the dimension} (of the matrices in the finitely generated subgroup).\ For this reason, a similar argument does also apply to some other groups that \emph{preserve a quantity}~\cite{ccc}, including e.g.\
\begin{itemize}
    \item The group $\diffvol(\R^2)$ (\emph{conservation of volume});
    \item The stable mapping class group $\Gamma^1_\infty$ (\emph{conservation of genus}).
\end{itemize}
For linear groups note that our argument needs an appropriate notion of dimension or rank.\ For example, the general linear group of the cone of a ring is dissipated, and in particular it has commuting $\Z$-conjugates \cite[pp.~84-85]{berrick:approach}.

\begin{prop}[c.\ $\Z/2$ c.\ not c.\ $\Z$ c.]
\label{prop:GL}
The group $\GL(\R)$ has commuting $\Z/2$-conjugates but not commuting $\Z$-conjugates.
\end{prop}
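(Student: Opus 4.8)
The statement splits into two parts. The positive part, that $\GL(\R)$ has commuting $\Z/2$-conjugates, is already cited from \cite{ccc}, so the real content is the negative part: $\GL(\R)$ does not have commuting $\Z$-conjugates. The plan is to find a single finitely generated non-abelian subgroup $H \leq \GL(\R)$ that cannot be displaced infinitely by any $t$, using the fact that the ``size'' of a matrix — the dimension of the finite block on which it is non-trivial — is preserved under conjugation.

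First I would fix notation: think of $\GL(\R)$ as the union of the $\GL_n(\R)$ sitting in the top-left corner, so every $g \in \GL(\R)$ acts non-trivially only on finitely many coordinates; call $\supp(g) \subseteq \N$ this finite set and $N(g) = \max \supp(g)$ its ``rank'' or ``level''. The key elementary observation is that conjugation by $t$ moves supports around by the permutation-like combinatorics of $t$, but cannot create a subgroup of strictly larger level: more precisely, if $H \leq \GL_n(\R)$ and $t \in \GL_m(\R)$, then ${}^{t^p}H \leq \GL_m(\R)$ for every $p$ (all conjugates live in the same finite block determined by $t$ and $H$). So if $[H, {}^{t^p}H] = 1$ for all $p \geq 1$, then in particular all the conjugates ${}^{t^p}H$ for $p = 1, \dots, m!$ are subgroups of the \emph{fixed} finite-dimensional group $\GL_m(\R)$ commuting with $H$, and moreover $t^{m!}$ acts trivially on $\{1, \dots, m\}$ in a suitable sense — this is where one should be a little careful, since $t^{m!}$ need not be the identity, it only permutes coordinates trivially up to the block structure. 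The cleaner route is: take $H$ to be a finitely generated subgroup of $\GL_2(\R)$, say a free group on two elementary-looking matrices, acting on coordinates $\{1,2\}$ and fixing all others. If $t \in \GL_m(\R)$ with $m \geq 2$ satisfies $[H, {}^{t^p}H] = 1$ for all $p$, I would argue that $t$ must actually normalize the coordinate block $\{1, 2\}$ — since otherwise some power ${}^{t^p}H$ overlaps $H$ in a way forcing non-commutation, exactly as in the minimality argument of Proposition~\ref{prop:czc:not:cznc} — and then $t$ acts on $\GL_2(\R)$ through a finite quotient, so $t^{p}$ for $p$ equal to the order of that action gives $[H, H] = 1$, contradicting that $H$ is non-abelian.

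The main obstacle I anticipate is making the ``$t$ must normalize the relevant finite block'' step fully rigorous: a generic element of $\GL_m(\R)$ does not literally permute coordinates, it only mixes them linearly, so ``support'' is the set of coordinates outside which $t$ acts trivially, and $t$ can move a coordinate subspace to a genuinely different subspace of $\R^m$. The right framework is probably to phrase everything in terms of the finite-dimensional subspace $V = \R^m$ on which everything acts, let $W \subseteq V$ be the $2$-dimensional coordinate subspace supporting $H$, and observe that $H$ acts as identity on $V/W$ and on a complement — wait, that is false in general since $H$ is only block-diagonal. Better: $H$ fixes pointwise the subspace $U$ spanned by coordinates $\geq 3$, and $V = W \oplus U$. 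Then ${}^{t^p}H$ fixes pointwise $t^p(U)$; if $[H, {}^{t^p}H] = 1$ then ${}^{t^p}H$ preserves the fixed-subspace of $H$, which is $U$, so ${}^{t^p}H$ fixes $U$ setwise and $H$ fixes $t^p(U)$ setwise. Since $H$ acts irreducibly enough on $W$ (choose it so), the only $H$-invariant subspaces containing no nonzero vector of $W$ are inside $U$; hence $t^p(U) \supseteq U$ forces $t^p(U) = U$ by dimension count, i.e. $t$ normalizes $U$, hence normalizes $W$ modulo $U$... I would then conclude that $t$ induces an element of $\GL(W) \cong \GL_2(\R)$ commuting with $H$ for all powers — but that still allows infinite order, so the contradiction with \emph{non-abelianity} needs $t^p$ to eventually act trivially. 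That is exactly where I expect the argument to really bite, and I suspect the actual proof in the paper instead chooses $H$ more cleverly (e.g. containing a torsion element, or using that $[H, {}^{t^p}H] = 1$ together with $t$ conserving dimension forces $\rk(t) \to \infty$ if the displacement is to be non-trivial, contradicting $t \in \GL(\R)$) — in other words, the slick contradiction is probably that commuting $\Z$-conjugates would force the displacing element to have unbounded rank, which is impossible in the \emph{union} $\GL(\R)$. I would pursue that rank-unboundedness contradiction as the primary strategy, falling back on the fixed-subspace analysis above only to make the ``non-trivial displacement increases rank'' claim precise.
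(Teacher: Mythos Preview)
Your closing intuition --- that commuting $\Z$-conjugates would force $t$ to have unbounded rank --- is exactly the paper's punchline, and taking $H$ inside $\GL_2$ is the right move. The gap is in how you try to reach that contradiction. Your intermediate attempt to show that $t$ \emph{normalizes} the block $\{1,2\}$ (equivalently, that $t^p(U) = U$) is headed in the wrong direction, and the step ``hence $t^p(U) \supseteq U$'' is simply unjustified: an $H$-invariant codimension-$2$ subspace of $\R^m$ need not be $U$; it can just as well be $W \oplus S$ for a codimension-$2$ subspace $S \subset U$, so nothing forces $t$ to preserve $U$. In fact the opposite of normalization is what happens --- $t$ genuinely \emph{displaces} $W = \langle e_1,e_2\rangle$ off itself --- and that displacement is precisely what makes the dimension blow up.

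The paper executes this by taking $H$ to be the copy of $\GL_2(\Z)$ acting on $\langle e_1,e_2\rangle$ and computing its centralizer in $\GL(\R)$ by explicit block multiplication: any element commuting with $H$ must act on $\R^2$ as a scalar. Applying this symmetrically to the relation $[H,{}^{t^p}H]=1$ shows that $H$ acts on $t^p\R^2$ by scalars; since the $\GL_2(\Z)$-action on $\R^2$ has no one-dimensional invariant subspace, this forces $\R^2 \cap t^p\R^2 = \{0\}$ for every $p \neq 0$. Hence the subspaces $\{t^p\R^2\}_{p \geq 0}$ are pairwise transverse and span an infinite-dimensional space, contradicting $t \in \GL_n(\R)$ for some finite $n$. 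The concrete ingredient your sketch is missing is that centralizer computation: it replaces the fixed-subspace analysis entirely and delivers $W \cap t^p W = \{0\}$ in one stroke, with no need for $t$ to normalize anything.
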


\begin{proof}
The fact that $\GL(\R)$ has commuting $\Z/2$-conjugates has already been proved~\cite[Corollary 5.45 and its proof]{ccc}. We are left to show here that $\GL(\R)$ does not have commuting $\Z$-conjugates.

Let $\{ e_1, e_2, \ldots \}$ denote the standard basis elements of $\cup_{n\in\N} \R^n$, and let $H$ be the copy of $\GL_2(\Z)$ acting on $\R^2 \coloneqq \langle e_1, e_2 \rangle$ and fixing all other $e_i$.\ Note that $H \leq \GL(\R)$ is finitely generated.\
We compute via block multiplication the conjugation of an element of $\GL(\R)$ by an element of $H$:
\[
\begin{pmatrix}
    X & 0 \\
    0 & I
\end{pmatrix}
\begin{pmatrix}
    A & B \\
    C & D
\end{pmatrix}
\begin{pmatrix}
    X^{-1} & 0 \\
    0 & I
\end{pmatrix}
=
\begin{pmatrix}
    XAX^{-1} & XB \\
    CX^{-1} & D
\end{pmatrix}
,
\]
where $X \in \GL_2(\Z)$.\
Testing with both
\[
X = \begin{pmatrix}
    -1 & 0 \\
    0 & 1
\end{pmatrix}
\text{ and }
\begin{pmatrix}
    1 & 0 \\
    0 & -1
\end{pmatrix}
\]
we see that an element in the centralizer of $H$ must have $B = C = 0$ and $A$ diagonal.\ Moreover, testing with
\[X =
\begin{pmatrix}
    1 & 0 \\
    1 & 1
\end{pmatrix}\]
we see that $A$ must be a scalar multiple of the identity.\ It follows that for every $g$ in the centralizer of $H$, the action of $g$ on $\R^2 = \langle e_1, e_2 \rangle$ is by scalar multiplication.

Let $t \in \GL(\R)$ be such that $[H, {}^t H] = 1$.\ Then the action of ${}^t H$ on $\R^2$ is by scalar multiplication.\ For the same reason, the action of $H$ on $t\R^2$ is also by scalar multiplication.\  Since the action of $H$ on $\R^2$ has no one-dimensional invariant subspace, this shows that $\R^2 \cap t \R^2 = \{ 0 \}$.

Now suppose that there exists $t \in \GL(\R)$ such that $[H, {}^{t^p} H] = 1$ for all $p \geq 1$.\ Then the previous paragraph implies that $\R^2 \cap t^p \R^2 = \{ 0 \}$ for all $p \geq 1$.\ It follows that $t^{p_1} \R^2 \cap t^{p_2} \R^2 = \{ 0 \}$ for all $p_1 \neq p_2$.\ Therefore the span of $\{ t^{p} e_1, t^{p} e_2 \mid p \geq 1 \}$ is infinite-dimensional.\ But this is not possible since $t \in \GL_n(\R)$ for some $n$.\ Thus $\GL(\R)$ does not have commuting $\Z$-conjugates.
\end{proof}

\subsection{Correctness of Figure \ref{fig:vd-czc}}
\label{ss:vd-czc}

We will exhibit the following examples:

\begin{enumerate}
    \item A group with commuting $\Z$-conjugates that is not dissipated and does not have conjugates in commuting $\Z$-conjugate (Proposition \ref{prop:czc:not:diss:not:cczc}).
    \item A group with conjugates in commuting $\Z$-conjugate that is not dissipated (Example \ref{ex:F}).
    \item A dissipated group that does not have conjugates in commuting $\Z$-conjugate (Proposition \ref{prop:diss:not:cczc}).
    \item A group with conjugates in commuting $\Z$-conjugate that is also dissipated (Example \ref{ex:dissipated:cczc}).
\end{enumerate}

We start with two well-known examples:

\begin{example}[c.\ c.\ $\Z$ c. and dissipated]
\label{ex:dissipated:cczc}
    The group $\homeo(\R)$ is dissipated \cite{mather1971vanishing, berrick} and has conjugates in commuting $\Z$-conjugate \cite{monod:thompson, monodnariman}.
\end{example}

\begin{example}[c.\ c.\ $\Z$ c.\ not dissipated]
\label{ex:F}
    The derived subgroup $F'$ of Thompson's group $F$ \cite{cfp_96} has conjugates in commuting $\Z$-conjugate \cite{monod:thompson}. However it is not dissipated, in fact it is not binate. This is a combination of the computations of its homology~\cite{cohoT} (that is non trivial) and the fact that binate groups are acyclic~\cite{varadarajan, berrick}.\ In fact, it can be argued that $F'$ is not binate analogously to the proof of Proposition \ref{prop:czc:not:diss:not:cczc} below, without appealing to homological results.
\end{example}

We now move on to the two remaining examples, which will stem from the same construction. We construct two groups $\Gamma < G$ inside $\homeo(\R)$.\ We start with $I_1 \coloneqq (0, 1)$ and $\Gamma_1 = G_1$ the copy of Thompson's group $F$ (which is finitely generated) acting minimally on $I_1$ and fixing its complement.

Suppose by induction that we have defined a group $G_i$ supported on an interval $I_i$ and containing a finitely generated subgroup $\Gamma_i$.\ We choose an element $t_{i+1} \in \homeo(\R)$ with connected support, such that $t_{i+1}(I_i) \cap I_i = \emptyset$.\ For simplicity, we denote $I_i^p \coloneqq t_{i+1}^p(I_i)$ for all $p \in \Z$, and note that since $t_{i+1}$ is orientation-preserving, the intervals $I_i^p$ are pairwise disjoint.
We denote by $G_i^\omega$ the group of homeomorphisms $g$ supported on the disjoint union of intervals $\{ I_i^p \}_{p \in \Z}$ such that the restriction of $g$ on $I_i^p$ coincides with the action of an element of ${}^{t_{i+1}^p} G_i$.\ Note that we are allowing elements of $G_i^\omega$ to act simultaneously on infinitely many intervals, and therefore $G_i^\omega$ is naturally isomorphic to $\prod_{\Z} G_i$.\ We set $G_{i+1} \coloneqq \langle G_i^\omega, t_{i+1} \rangle$ which is naturally isomorphic to the unrestricted wreath product $G_i \Wr \Z$.\ We also set $\Gamma_{i+1} \coloneqq \langle \Gamma_i, t_{i+1} \rangle$, which is naturally isomorphic to $\Gamma_i \wr \Z$, seen as a subgroup of $G_i \wr \Z \leq G_i \Wr \Z = G_{i+1}$.\ Finally, we set $I_{i+1} \coloneqq \supp(t_{i+1}) = \supp(\Gamma_{i+1}) = \supp(G_{i+1})$.

Let $\Gamma < G < \homeo(\R)$ be the directed unions of the $\Gamma_i$ and the $G_i$ respectively.\ It follows from the construction that:

\begin{lemma}
\label{lem:mon:firstpart}
    The group $G$ is dissipated, and the subgroup $\Gamma < G$ has commuting $\Z$-conjugates.
\end{lemma}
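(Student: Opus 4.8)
The plan is to check the two assertions directly against the definitions, exploiting only the wreath-product structure of the $G_i$ and $\Gamma_i$.

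For the claim that $G$ is \emph{dissipated}, I would work with the set $X := \bigcup_i I_i$ and the directed system of bounded subsets $(I_i)_i$ (which is in fact a nested increasing chain of intervals). Every element of $G$ lies in some $G_j$, hence is supported on $I_j$, so $G$ is boundedly supported. The crucial observation — and the one step I expect to need genuine care — is the support lemma: if $g \in G$ and $\supp(g) \subseteq I_i$, then already $g \in G_i$. I would prove this by reducing the least $j$ with $g \in G_j$: if $j > i$, then $I_i \subseteq I_{j-1} = I_{j-1}^0$, one of the pairwise disjoint intervals permuted by the factor $\Z = \langle t_j\rangle$ of $G_j = G_{j-1}\Wr\Z$; an element of $G_j$ supported inside $I_{j-1}^0$ must therefore have trivial $\Z$-part (otherwise it would move $I_{j-1}^1$, which it fixes) and trivial $G_{j-1}$-components away from position $0$, so $g \in G_{j-1}$, and iterating down yields $g \in G_i$. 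Granting this, I claim $t_{i+1}$ is a dissipator for $G$ and $I_i$: condition (1) holds because the intervals $I_i^p = t_{i+1}^p(I_i)$ are pairwise disjoint (as noted in the construction, since $t_{i+1}$ is orientation-preserving and $t_{i+1}(I_i)\cap I_i = \emptyset$), so $t_{i+1}^p(I_i)\cap I_i = \emptyset$ for all $p \geq 1$; and for condition (2), given $g \in G$ with $\supp(g)\subseteq I_i$ the support lemma gives $g\in G_i$, so the homeomorphism $f(g)$ — equal to ${}^{t_{i+1}^p} g$ on $I_i^p$ for $p\geq 1$ and the identity elsewhere — is exactly the element of $G_i^\omega \cong \prod_{\Z} G_i$ whose component is $g$ in every position $p\geq 1$ and trivial otherwise; hence $f(g)\in G_i^\omega \leq G_{i+1}\leq G$. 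Since such a dissipator exists for every $i$, $G$ is dissipated.

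For the claim that $\Gamma$ has \emph{commuting $\Z$-conjugates}, let $H\leq\Gamma$ be finitely generated; then $H\leq\Gamma_i$ for some $i$, and I would take $t := t_{i+1} \in \Gamma_{i+1}\leq\Gamma$. For each $p\geq 1$, the subgroup ${}^{t^p}H$ is supported on $I_i^p$, which is disjoint from $I_i \supseteq \supp(H)$; equivalently, inside $\Gamma_{i+1} = \Gamma_i\wr\Z = \bigoplus_{\Z}\Gamma_i \rtimes \Z$ the subgroup ${}^{t_{i+1}^p}\Gamma_i$ is the copy of $\Gamma_i$ in position $p$, which commutes with the copy in position $0$. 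Either way $[H,{}^{t^p}H] \leq [\Gamma_i,{}^{t_{i+1}^p}\Gamma_i] = 1$ for all $p\geq 1$, which is precisely the defining condition; this can also be read off from Example \ref{ex:czc:action}.

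The only delicate ingredient is the support lemma $\supp(g)\subseteq I_i \Rightarrow g\in G_i$, which is exactly what guarantees that the output $f(g)$ of the putative dissipator lands back in $G$; everything else is bookkeeping with disjoint supports in the iterated wreath products.
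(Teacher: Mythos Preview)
Your proof is correct, and in fact more careful than the paper's: the paper simply asserts that the lemma ``follows from the construction'' and gives no argument. Your identification of the support lemma ($\supp(g)\subseteq I_i \Rightarrow g\in G_i$) as the one genuinely non-trivial step is spot on---without it one cannot verify condition~(2) of Definition~\ref{def:dissipated}, since that condition quantifies over \emph{all} $g\in G$ supported on $I_i$, not just over $g\in G_i$, and the membership $f(g)\in G_i^\omega$ requires precisely that each restriction ${}^{t_{i+1}^p}g$ lie in ${}^{t_{i+1}^p}G_i$. Your descent argument for the support lemma (kill the $\Z$-part by looking at the action on $I_{j-1}^1$, then kill the nonzero lamp coordinates) is correct and is essentially the same mechanism that the paper later uses in the proof of Lemma~\ref{lem:conjugacy:mon}. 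The commuting $\Z$-conjugates part is routine and matches Example~\ref{ex:czc:action}, as you note.
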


The following lemma will be useful:

\begin{lemma}
\label{lem:normal:mon}
    For each $i \in \N$, the subgroup $\Gamma_i < G_i$ is normal.
\end{lemma}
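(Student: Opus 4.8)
\textbf{Proof plan for Lemma \ref{lem:normal:mon}.}
The plan is to prove the statement by induction on $i$, mirroring the inductive construction of the groups $\Gamma_i < G_i$. The base case $i = 1$ is immediate: $\Gamma_1 = G_1$, so normality is trivial. For the inductive step, I would assume that $\Gamma_i \lhd G_i$ and then analyze the structure $\Gamma_{i+1} = \Gamma_i \wr \Z \leq G_i \Wr \Z = G_{i+1}$ to show $\Gamma_{i+1} \lhd G_{i+1}$.

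The key observation is that normality passes through the (restricted and unrestricted) wreath product construction. Concretely, $G_{i+1}$ is generated by $t_{i+1}$ together with the base group $G_i^\omega \cong \prod_{\Z} G_i$, while $\Gamma_{i+1}$ is generated by $t_{i+1}$ together with $\Gamma_i \wr \Z$, whose base group is $\bigoplus_{\Z} \Gamma_i \leq \prod_{\Z} G_i$. So it suffices to check two things: first, that $t_{i+1}$ normalizes $\Gamma_{i+1}$ — this is clear since conjugation by $t_{i+1}$ shifts the $\Z$-coordinates of the base group and fixes $t_{i+1}$, hence preserves both $\bigoplus_{\Z} \Gamma_i$ and the subgroup generated by it and $t_{i+1}$; and second, that the base group $G_i^\omega \cong \prod_{\Z} G_i$ normalizes $\Gamma_{i+1}$. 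For the second point, since conjugation by an element of the base group acts coordinatewise, it is enough to know that a single copy of $G_i$ (acting on the interval $I_i^p$) normalizes the corresponding copy of $\Gamma_i$ and commutes with the copies of $\Gamma_i$ on the other intervals $I_i^q$ ($q \neq p$), the latter being automatic by disjointness of supports, and the former being exactly the inductive hypothesis $\Gamma_i \lhd G_i$. One must also check that conjugating $t_{i+1}$ by an element $b$ of the base group lands back in $\Gamma_{i+1}$: indeed ${}^b t_{i+1} = b \cdot {}^{t_{i+1}} (b^{-1}) \cdot t_{i+1}$, and ${}^{t_{i+1}}(b^{-1})$ differs from $b^{-1}$ only by a shift of coordinates, so if $b \in \bigoplus_\Z \Gamma_i$ then $b \cdot {}^{t_{i+1}}(b^{-1}) \in \bigoplus_\Z \Gamma_i$ and the product lies in $\Gamma_{i+1}$; but here one must be careful, because we are conjugating an element of $\Gamma_{i+1}$ by an element of $G_{i+1}$, so $b$ ranges over $\prod_\Z G_i$, not $\bigoplus_\Z \Gamma_i$.

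This last point is where the argument requires genuine care and is the main obstacle: conjugating a generator of $\Gamma_{i+1}$ by a general element $g \in G_{i+1}$, where $g$ may act on infinitely many intervals simultaneously. Writing $g = b s^k$ with $b \in \prod_\Z G_i$ and $s = t_{i+1}$, conjugation by $s^k$ is handled by the shift argument above, so the crux is conjugation by $b = (b_p)_{p \in \Z} \in \prod_\Z G_i$. For an element $\gamma = (\gamma_p)_p \in \bigoplus_\Z \Gamma_i$ (finitely supported, with $\gamma_p \in \Gamma_i$), we get ${}^b \gamma = ({}^{b_p}\gamma_p)_p$, and by the inductive hypothesis each ${}^{b_p}\gamma_p \in \Gamma_i$; moreover this is again finitely supported since $\gamma$ was, so ${}^b\gamma \in \bigoplus_\Z \Gamma_i \leq \Gamma_{i+1}$. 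For the generator $t_{i+1}$ itself, ${}^b t_{i+1} = (b_p b_{p-1}^{-1})_p \cdot t_{i+1}$ (up to indexing conventions), and the first factor need \emph{not} be finitely supported nor land in $\bigoplus_\Z \Gamma_i$ — so a naive computation fails, and one should instead argue via the characterization $\Gamma_{i+1} = \{ \text{elements of } G_{i+1} \text{ whose base-group part lies in } \bigoplus_\Z \Gamma_i \}$ together with the fact that the base-group part of ${}^g x$ is determined, coordinatewise, by the inductive hypothesis.

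A cleaner route, which I would adopt to sidestep the bookkeeping, is to give an intrinsic description of $\Gamma_i$ inside $G_i$: namely, $\Gamma_i$ should be exactly the subgroup of $G_i$ consisting of those homeomorphisms whose restriction to each "innermost" interval $I_1^{\vec p}$ (in the iterated translate structure) is piecewise-linear / lies in $F$, with only finitely many innermost intervals moved at each stage. One then checks that this description is visibly $G_i$-invariant, because conjugation by $g \in G_i$ permutes the innermost intervals among themselves (respecting the nesting) and conjugates the $F$-action on each by another element of $G_i$, which by induction preserves the property of lying in $F$. Phrased this way, normality is essentially a structural statement about the construction rather than a wreath-product computation, and the induction closes cleanly. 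I would present the wreath-product version as the main line of argument but invoke this intrinsic description precisely at the step of conjugating $t_{i+1}$ by an arbitrary element of $\prod_\Z G_i$.
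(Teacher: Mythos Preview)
Your inductive setup matches the paper's, and you correctly isolate the crux: conjugating the generator $t_{i+1}$ by an arbitrary $b = (b_p)_p \in G_i^\omega \cong \prod_\Z G_i$ yields ${}^b t_{i+1} = (b_p b_{p-1}^{-1})_p \cdot t_{i+1}$, and the base part need not lie in $\bigoplus_\Z \Gamma_i$. The paper's own proof glosses over exactly this point with the claim ``it suffices to show that $\bigoplus_\Z \Gamma_i$ is normal in $G_i^\omega$ and invariant under $t_{i+1}$'' --- but that only gives $\bigoplus_\Z \Gamma_i \unlhd G_{i+1}$, not $\Gamma_{i+1} = (\bigoplus_\Z \Gamma_i)\rtimes \langle t_{i+1}\rangle \unlhd G_{i+1}$. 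So the obstacle you flagged is a genuine gap, and it is the same one present in the paper's argument.

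Your proposed workaround via an ``intrinsic description'' cannot close the gap, because the statement as written is actually false already for $i = 2$. Since $\Gamma_1 = G_1 = F$, we have $\Gamma_2 = F \wr \Z = (\bigoplus_\Z F)\rtimes\langle t_2\rangle$ inside $G_2 = F \Wr \Z = (\prod_\Z F)\rtimes\langle t_2\rangle$. Choose $b = (b_p)_p \in \prod_\Z F$ with the $b_p$ pairwise distinct and non-trivial (possible since $F$ is infinite). Then $b \in G_2$ and $t_2 \in \Gamma_2$, but ${}^b t_2 = (b_p b_{p-1}^{-1})_p \cdot t_2$ with $b_p b_{p-1}^{-1} \neq 1$ for every $p$, so the base part is not finitely supported and ${}^b t_2 \notin \Gamma_2$. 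Your intrinsic description breaks down because elements such as $t_2 \in \Gamma_2$ already move \emph{all} innermost intervals, so ``only finitely many innermost intervals moved'' does not characterise $\Gamma_i$; the correct recursive condition --- finite support of the base part at each wreath level --- is precisely the one that fails to be $G_i$-invariant. The downstream applications (Lemma~\ref{lem:conjugacy:mon}, Proposition~\ref{prop:diss:not:cczc}) may well survive with a weaker statement, but the lemma as formulated does not hold.
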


\begin{proof}
    We proceed by induction on $i \in \N$, the case $i = 1$ being tautological.\ Suppose by induction that $\Gamma_i$ is normal in $G_i$.\ Consider $$\Gamma_{i+1} = \bigoplus_{\Z} \Gamma_i \rtimes \langle t_{i+1} \rangle \leq G_{i+1} = G_i^\omega \rtimes \langle t_{i+1} \rangle.$$ Note that it suffices to show that $\oplus_{\Z} \Gamma_i$ is normal in $G_i^\omega$ and invariant under the action of $t_{i+1}$.\ The latter is clear.\ Now a direct sum is normal in its corresponding direct product, and by induction hypothesis $\Gamma_i$ is normal in $G_i$.\ Thus
    \begin{align*}
        \bigoplus_{\Z} \Gamma_i&\unlhd \prod_{\Z}\Gamma_i\unlhd \prod_{\Z}G_i=G_i^\omega.\qedhere
    \end{align*}
\end{proof}

The key observation is the following:

\begin{lemma}
\label{lem:conjugacy:mon}
    For every $g \in G$ and every $i \in \N$, we have that either 
    \begin{itemize}
        \item $g(I_i) \cap I_i = \emptyset$, or 
        \item $g(I_i) = I_i$, ${}^g G_i = G_i$ and ${}^g \Gamma_i = \Gamma_i$.
    \end{itemize}
\end{lemma}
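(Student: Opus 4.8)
The plan is to prove the dichotomy by induction on the position of $g$ in the directed system defining $G$. Since $G = \bigcup_j G_j$ and $G_i \leq G_j$ for $j \geq i$, every $g \in G$ lies in $G_j$ for some $j \geq i$; I will induct on $j - i$. I will freely use two elementary facts: (a) a homeomorphism of $\R$ supported on a set $S$ maps both $S$ and its complement onto themselves; and (b) the nesting $I_i \subseteq I_j$ for $i \leq j$. Fact (b) holds because $I_j = \supp(t_j)$ is connected and $t_j$-invariant, hence contains the whole orbit $\{ t_j^p(I_{j-1}) \}_{p \in \Z}$, in particular $I_{j-1} = I_{j-1}^0$; iterating gives $I_i \subseteq I_{j-1} \subseteq I_j$.

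\emph{Base case $j = i$.} Here $g \in G_i$ is supported on $I_i$, so $g(I_i) = I_i$ by (a); moreover ${}^g G_i = G_i$ trivially, and ${}^g \Gamma_i = \Gamma_i$ by Lemma \ref{lem:normal:mon}.

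\emph{Inductive step $j > i$.} Using $G_j = G_{j-1}^\omega \rtimes \langle t_j \rangle$, write $g = h \cdot t_j^k$ uniquely with $h \in G_{j-1}^\omega$ and $k \in \Z$. Recall that $G_{j-1}^\omega$ is supported on the pairwise disjoint intervals $I_{j-1}^p = t_j^p(I_{j-1})$ and that each of its elements preserves every $I_{j-1}^p$. Since $I_i \subseteq I_{j-1} = I_{j-1}^0$, we obtain $g(I_i) = h(t_j^k(I_i)) \subseteq h(I_{j-1}^k) = I_{j-1}^k$. If $k \neq 0$, then $I_{j-1}^k$ is disjoint from $I_{j-1}^0 \supseteq I_i$, so $g(I_i) \cap I_i = \emptyset$ and we are in the first alternative. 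If $k = 0$, then $g = h$; let $h_0 \in G_{j-1}$ be the unique element, necessarily supported on $I_{j-1} = I_{j-1}^0$, whose action agrees with $h$ on $I_{j-1}^0$ (it exists by the definition of $G_{j-1}^\omega$). Since $I_i \subseteq I_{j-1}^0$, the homeomorphisms $g$ and $h_0$ agree on $I_i$, so $g(I_i) = h_0(I_i)$; now apply the inductive hypothesis to $h_0 \in G_{j-1}$, which is legitimate as $(j-1) - i < j - i$. Either $h_0(I_i) \cap I_i = \emptyset$, giving the first alternative for $g$, or $h_0(I_i) = I_i$ together with ${}^{h_0} G_i = G_i$ and ${}^{h_0} \Gamma_i = \Gamma_i$.

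In this last case it only remains to upgrade the two conjugation statements from $h_0$ to $g = h$. The key point is that $h$ and $h_0$ coincide on $I_{j-1}^0$ and both preserve it, whereas every $f \in G_i$ (and every $f \in \Gamma_i$) is supported inside $I_i \subseteq I_{j-1}^0$; from this one checks directly that $h f h^{-1} = h_0 f h_0^{-1}$ as homeomorphisms of $\R$, both fixing $\R \setminus I_{j-1}^0$ pointwise and coinciding on $I_{j-1}^0$ since $h = h_0$ there. Hence ${}^g G_i = {}^{h_0} G_i = G_i$ and ${}^g \Gamma_i = {}^{h_0} \Gamma_i = \Gamma_i$, which completes the induction. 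I expect this verification of the equality of conjugates to be the only step requiring real care, as it forces one to track precisely where each homeomorphism acts nontrivially; everything else reduces to bookkeeping with supports and the wreath-product decomposition.
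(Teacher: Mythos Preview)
Your proof is correct and follows essentially the same approach as the paper: induction on $j-i$ for $g\in G_j$, using the wreath-product decomposition $G_j = G_{j-1}^\omega \rtimes \langle t_j \rangle$ to separate the cases $k\neq 0$ (displacement) and $k=0$ (reduce to the $0$-th coordinate $h_0\in G_{j-1}$ and invoke the inductive hypothesis together with Lemma~\ref{lem:normal:mon}). The paper organizes the induction slightly differently, treating $d=1$ as a separate case and then, for $d>1$, first applying the $d=1$ reasoning to the pair $(I_{j-1},G_j)$ before invoking induction; your unified inductive step is a mild streamlining of this, and your explicit verification that ${}^g f = {}^{h_0} f$ for $f\in G_i$ spells out what the paper leaves implicit.
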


\begin{proof}
    We prove the claim by induction on $d = (j-i) \in \Z$ where $i$ is as in the statement, and $j$ is such that $g \in G_j$.\ If $j \leq i$, then $g$ is supported on $I_i$ and belongs to $G_i$, so $g(I_i) = I_i$ and ${}^g G_i = G_i$ are trivial, and ${}^g \Gamma_i = \Gamma_i$ follows from Lemma \ref{lem:normal:mon}.

    Now suppose that $g \in G_{i+1}$, i.e.\ $d = 1$.\ By construction $G_{i+1}$ is isomorphic to $G_i \Wr \Z$, with $t_{i+1}$ representing the stable letter.\ The copy of $G_i$ sitting at the lamp labelled by $k$ is supported on $I_i^k$ and fixes pointwise its complement, in particular it fixes pointwise $I_i^\ell$ for all $\ell \neq k$.\ We write $g = t_{i+1}^k b$ for some $b \in G_i^\omega$ and $k \in \Z$.\ If $k \neq 0$, then $g(I_i) = t_{i+1}^k b(I_i) = t_{i+1}^k (I_i) = I_i^k$ which is disjoint from $I_i$.\ Otherwise, $g = b$ preserves $I_i$, and the action of $g$ by conjugacy on $G_i$ is the same as the action of the $0$-th component of $g$, which belongs to $G_i$.\ Therefore ${}^g G_i = G_i$ and ${}^g \Gamma_i = \Gamma_i$, as in the first paragraph.

    Now suppose that $d > 1$, i.e.\ $j-1 > i$. Since $I_i \subset I_{j-1}$, if $g(I_{j-1}) \cap I_{j-1} = \emptyset$, then we are done.\ Otherwise, the above argument shows that $g(I_{j-1}) = I_{j-1}$, and there exists an element $h \in G_{j-1}$ such that $g|_{I_{j-1}} = h|_{I_{j-1}}$, and therefore the actions of $g$ and $h$ by conjugacy on $G_{j-1}$ coincide.\ Then we can apply the induction hypothesis to $h$ and conclude.
\end{proof}

We are now ready to provide one more example for Figure \ref{fig:vd-czc}:

\begin{prop}[Dissipated not c.\ c.\ $\Z$ c.]
\label{prop:diss:not:cczc}
    The group $G$ is dissipated, but does not have conjugates in commuting $\Z$-conjugate.
\end{prop}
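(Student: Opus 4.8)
That $G$ is dissipated is already contained in Lemma~\ref{lem:mon:firstpart}, so the plan is to prove only the negative half of the statement. I would argue by contradiction. Suppose $G$ had conjugates in commuting $\Z$-conjugate, witnessed by a subgroup $\Lambda \leq G$ and an element $t \in G$ as in Definition~\ref{def:M}. Since $G = \bigcup_i G_i$ is a directed union, $t \in G_j$ for some $j \in \N$, so $\supp(t) \subseteq I_j$. The subgroup $\Gamma_{j+1} \leq \Gamma \leq G$ is finitely generated (by induction on $i$: $\Gamma_1 = F$ is finitely generated, and $\Gamma_{i+1} = \Gamma_i \wr \Z$ is finitely generated whenever $\Gamma_i$ is) and non-abelian (it contains $\Gamma_1 = F$). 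By the covering property of $\Lambda$ there is $s \in G$ with $\Gamma_{j+1} \leq {}^s\Lambda$; setting $t' \coloneqq {}^s t \in G$ and repeating verbatim the computation in the proof of Lemma~\ref{lem:M:implies:czc}, we obtain $[\Gamma_{j+1}, {}^{(t')^p}\Gamma_{j+1}] = 1$ for all $p \geq 1$, in particular $[\Gamma_{j+1}, {}^{t'}\Gamma_{j+1}] = 1$.

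Next I would feed this into Lemma~\ref{lem:conjugacy:mon}. Applied to the element $t' \in G$ and the index $i = j+1$, the lemma says that either $t'(I_{j+1}) \cap I_{j+1} = \emptyset$ or ${}^{t'}\Gamma_{j+1} = \Gamma_{j+1}$; the latter is excluded, as it would give $[\Gamma_{j+1}, {}^{t'}\Gamma_{j+1}] = [\Gamma_{j+1}, \Gamma_{j+1}] \neq 1$, contradicting the previous paragraph. So $t'(I_{j+1}) \cap I_{j+1} = \emptyset$. On the other hand $\supp(t') = s(\supp(t)) \subseteq s(I_j)$, so $t'$ fixes pointwise the complement of $s(I_j)$; hence every point of $I_{j+1} \setminus s(I_j)$ would lie in both $I_{j+1}$ and $t'(I_{j+1})$, which is impossible. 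Therefore $I_{j+1} \subseteq s(I_j)$, that is, $s^{-1}(I_{j+1}) \subseteq I_j$.

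Finally, $s^{-1}(I_{j+1})$ is a nonempty open interval, and $I_j \subsetneq I_{j+1}$ by construction, so $\emptyset \neq s^{-1}(I_{j+1}) \subseteq I_j \subsetneq I_{j+1}$; thus $s^{-1}(I_{j+1})$ is neither disjoint from $I_{j+1}$ nor equal to it. This contradicts Lemma~\ref{lem:conjugacy:mon} applied to $g = s^{-1}$ and $i = j+1$, and the proof is complete. The step that carries the real content is the one in the second paragraph: reading the commuting hypothesis through the rigid dichotomy of Lemma~\ref{lem:conjugacy:mon} forces the conjugator $s$ to compress the strictly larger interval $I_{j+1}$ inside $I_j$, which is absurd. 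Everything else is routine bookkeeping with supports of homeomorphisms, together with the observation (used to invoke the lemma) that $\Gamma_{j+1}$ is finitely generated and non-abelian.
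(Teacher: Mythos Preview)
Your proof is correct, but it takes a somewhat different route from the paper's. Both arguments hinge on the dichotomy of Lemma~\ref{lem:conjugacy:mon}, but they deploy it differently. The paper works at level $n$ (where $t \in G_n$), conjugates $\Gamma_n$ \emph{into} $\Lambda$ via some $s$, and applies the dichotomy once, to $s$ and $I_n$: if $s(I_n) \cap I_n = \emptyset$ then $t$ and ${}^s\Gamma_n$ have disjoint supports and commute; if $s(I_n) = I_n$ then ${}^s\Gamma_n = \Gamma_n$ and normality (Lemma~\ref{lem:normal:mon}) gives ${}^t\Gamma_n = \Gamma_n$; either way $[{}^s\Gamma_n, {}^t({}^s\Gamma_n)] \neq 1$, contradicting $[\Lambda, {}^t\Lambda] = 1$. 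You instead bump up to level $j+1$, pull a conjugate of $\Lambda$ \emph{over} $\Gamma_{j+1}$, apply the dichotomy first to $t' = {}^s t$ (ruling out the normalising case immediately), and then a second time to $s^{-1}$, extracting the contradiction from the support-compression $I_{j+1} \subseteq s(I_j) \subsetneq s(I_{j+1})$. Your argument is a bit longer and invokes the dichotomy twice, but it has the mild advantage of not needing Lemma~\ref{lem:normal:mon}; the paper's argument is shorter and stays at a single level.
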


\begin{proof}
    Let $\Gamma_i, G_i, t_i, \Gamma$ and $G$ be as in the construction above.\ We have already seen in Lemma \ref{lem:mon:firstpart} that $G$ is dissipated.

    Now suppose by contradiction that $G$ has conjugates in commuting $\Z$-conjugate.\ So there exist $\Lambda \leq G$ and $t \in G$ with the following properties:
    \begin{enumerate}
        \item $[\Lambda, {}^{t^p} \Lambda] = 1$ for all $p \geq 1$;
        \item Every finite subset of $G$ is conjugate into $\Lambda$.
    \end{enumerate}
    Let $n \in \N$ be such that $t \in G_n$.\ The second property implies that there exists $s \in G$ such that ${}^s \Gamma_n \leq \Lambda$, since $\Gamma_n$ is finitely generated.
    By Lemma \ref{lem:conjugacy:mon}, we split into two cases.\
    First, suppose that $s(I_n) \cap I_n = \emptyset$.\ Then $t$ commutes with ${}^s \Gamma_n$ because they are supported on disjoint intervals, and so $[{}^s \Gamma_n, {}^t ({}^s \Gamma_n)] = [{}^s \Gamma_n, {}^s \Gamma_n] \neq 1$, because $\Gamma_n$ is not abelian.\ In the other case, $s(I_n) = I_n$ and ${}^s \Gamma_n = \Gamma_n$ and so again $[{}^s \Gamma_n, {}^t ({}^s \Gamma_n)] = [\Gamma_n, {}^t \Gamma_n] = [\Gamma_n, \Gamma_n ]\neq 1$, where the last equality follows from Lemma \ref{lem:normal:mon}.\
    In both cases, $[{}^s \Gamma_n, {}^t ({}^s \Gamma_n)] \neq 1$ and thus $[\Lambda, {}^t \Lambda] \neq 1$, which contradicts the defining property of $t$.
\end{proof}

Finally, we will show that $\Gamma$ fits the last remaining non-empty region in Figure \ref{fig:vd-czc}: It has commuting $\Z$-conjugates, but it is neither dissipated nor it has conjugates in commuting $\Z$-conjugate.

\begin{lemma}
\label{lem:support:gamma}
    Let $g \in \Gamma$. Then the support of $g$ is a disjoint union of finitely many intervals.
\end{lemma}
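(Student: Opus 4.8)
The plan is to induct on the least index $N$ with $g \in \Gamma_N$, exploiting the explicit description $\Gamma_{i+1} \cong \Gamma_i \wr \Z$ and the fact that the stable letters $t_{i+1}$ have connected support.

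For the base case $N = 1$ I would use that $\Gamma_1$ is Thompson's group $F$ realised by piecewise linear homeomorphisms of $I_1 = (0,1)$ with finitely many breakpoints, extended by the identity on $\R \setminus (0,1)$. On each of the finitely many maximal intervals of linearity of such a $g$, the equation $g(x) = x$ either holds identically or has at most one solution; hence $\fix(g)$ is a finite union of points and closed intervals, so $\supp(g)$ is a finite disjoint union of intervals.

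For the inductive step, given $g \in \Gamma_{i+1}$ I would put it in wreath‑product normal form $g = b\, t_{i+1}^k$ with $b$ in the base group $\bigoplus_\Z \Gamma_i$ and $k \in \Z$. Writing $I_i^p := t_{i+1}^p(I_i)$, note first that $b$ preserves each $I_i^p$ setwise and fixes $\R \setminus \bigsqcup_p I_i^p$ pointwise, while $t_{i+1}^k$ fixes $\R \setminus I_{i+1}$ pointwise; since $\bigsqcup_p I_i^p \subseteq I_{i+1}$, this already gives $\supp(g) \subseteq I_{i+1}$. If $k = 0$ then $g = b$, and on $I_i^p$ it acts as the $t_{i+1}^p$‑conjugate of some $h_p \in \Gamma_i$, with $h_p \ne \id$ for only finitely many $p$; by induction each $\supp(h_p)$, hence each $\supp(b|_{I_i^p}) = t_{i+1}^p(\supp(h_p))$, is a finite disjoint union of intervals, and since the $I_i^p$ are pairwise disjoint so is $\supp(g)$. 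If $k \ne 0$, I claim $\supp(g) = I_{i+1}$: since $I_{i+1} = \supp(t_{i+1})$ is connected and $t_{i+1}$ is orientation preserving, $z \mapsto t_{i+1}(z) - z$ has constant sign on $I_{i+1}$, so $t_{i+1}^k$ has no fixed point there; given $x \in I_{i+1}$, put $y := t_{i+1}^k(x) \ne x$ — if $y \notin \bigsqcup_p I_i^p$ then $g(x) = b(y) = y \ne x$, while if $y \in I_i^p$ then $g(x) = b(y) \in I_i^p$ whereas $x = t_{i+1}^{-k}(y) \in I_i^{p-k}$, and $I_i^p \cap I_i^{p-k} = \emptyset$ because $k \ne 0$ — so in every case $g(x) \ne x$, and $\supp(g) = I_{i+1}$ is a single interval. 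Thus in all cases $\supp(g)$ is a finite disjoint union of intervals, completing the induction.

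The only real subtlety, and the step I expect to require the most care, is the case $k \ne 0$: one must rule out that the base element $b$ cancels the ``translation'' $t_{i+1}^k$ and produces spurious fixed points. The point is that $b$ is confined to the pieces $I_i^p$ while $t_{i+1}^k$ carries each $I_i^p$ to the \emph{distinct} piece $I_i^{p+k}$ and displaces every point of the complementary gaps monotonically, so no cancellation can occur. The remaining steps are routine bookkeeping with the wreath‑product normal form, together with the elementary fact that a homeomorphism of $\R$ carries a finite disjoint union of intervals to a finite disjoint union of intervals.
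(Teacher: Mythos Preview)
Your proof is correct and follows essentially the same route as the paper: induction on the least $i$ with $g\in\Gamma_i$, splitting the inductive step according to whether the $t_{i+1}$-exponent in the wreath-product normal form vanishes. The paper simply asserts that $\supp(g)=I_{i+1}$ when $k\neq 0$, whereas you supply the verification that $b$ cannot undo the shift of $t_{i+1}^k$; this extra care is justified and does not change the overall strategy.
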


\begin{proof}
    We prove this by induction on the index $i \in \N$ such that $g \in \Gamma_i$. If $i = 1$, then $g$ is an element of Thompson's group $F$ acting on the interval, and so the statement is well-known \cite{cfp_96}. Now suppose that this is true up to $\Gamma_i$, and let $g \in \Gamma_{i+1}$. Then we can write $g = t_{i+1}^k b$ for some $b \in \oplus_{\Z} \Gamma_i$ and some $k \in \Z$. If $k \neq 0$, then $\supp(g) = \supp(t_{i+1}) = I_{i+1}$, which is an interval.\ Otherwise, $g = b$ is the product of finitely many elements in conjugates of $\Gamma_i$ supported on disjoint sets, and so the induction hypothesis applies.
\end{proof}

\begin{prop}[c.\ $\Z$ c.\ not dissipated and not c.\ c.\ $\Z$ c.]
\label{prop:czc:not:diss:not:cczc}
    The group $\Gamma$ has commuting $\Z$-conjugates, but is not dissipated (in fact, it is not even binate) and does not have conjugates in commuting $\Z$-conjugate.
\end{prop}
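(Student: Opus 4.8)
The statement bundles three assertions: that $\Gamma$ has commuting $\Z$-conjugates, that $\Gamma$ is not binate (hence not dissipated), and that $\Gamma$ does not have conjugates in commuting $\Z$-conjugate. The first is already Lemma~\ref{lem:mon:firstpart}, so the plan is to treat the other two. The only place where a genuinely new idea is needed is non-binateness: there I would go through the elementary observation that \emph{binate groups are perfect}, and then show that $\Gamma$ has a nontrivial abelian quotient because it is built by iterated wreath products starting from the non-perfect group $F$.

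For non-binateness, first I would record that a binate group $\Delta$ is perfect: given $g \in \Delta$, apply Definition~\ref{def:binate} to the finitely generated subgroup $\langle g\rangle$ to get a homomorphism $f$ and an element $t$ with ${}^t f(g) = g\, f(g)$, whence $g = t f(g) t^{-1} f(g)^{-1} = [t, f(g)] \in [\Delta,\Delta]$; as $g$ was arbitrary, $\Delta = [\Delta, \Delta]$. It then suffices to exhibit a nonzero homomorphism $\Gamma \to \Z$. Here is how I would build one: pick any nonzero homomorphism $\sigma_1 \colon \Gamma_1 = F \to \Z$ (these exist since $F$ is not perfect), and extend inductively over $\Gamma_{i+1} = \Gamma_i \wr \Z = (\bigoplus_{\Z}\Gamma_i)\rtimes\langle t_{i+1}\rangle$ by declaring $\sigma_{i+1}((g_p)_p) = \sum_p \sigma_i(g_p)$ on the base and $\sigma_{i+1}(t_{i+1}) = 0$; this is a well-defined homomorphism precisely because the finite sum $\sum_p \sigma_i(g_p)$ is invariant under the shift induced by $t_{i+1}$. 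Since $\Gamma_i$ sits inside $\Gamma_{i+1}$ as the $0$-th coordinate, $\sigma_{i+1}$ restricts to $\sigma_i$ on $\Gamma_i$, so the $\sigma_i$ assemble into $\sigma_\infty \colon \Gamma \to \Z$, which is nonzero because it restricts to $\sigma_1$ on $\Gamma_1$. Hence $[\Gamma,\Gamma] \subseteq \ker\sigma_\infty \subsetneq \Gamma$, so $\Gamma$ is not perfect and therefore not binate. Since dissipated groups are binate (Remark~\ref{rem:diss}), $\Gamma$ is not dissipated either; alternatively, this last point follows directly from Lemma~\ref{lem:support:gamma}, since a dissipator $\tau$ for $\Gamma$ and a bounded set containing $I_1$ would, for any nontrivial $g$ supported on $I_1$, produce an element of $\Gamma$ whose support is the disjoint union of the infinitely many (pairwise disjoint) sets $\tau^p(\supp g)$, $p \geq 1$, contradicting the lemma.

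For the failure of conjugates in commuting $\Z$-conjugate I would argue exactly as in the proof of Proposition~\ref{prop:diss:not:cczc}, the point being that Lemma~\ref{lem:conjugacy:mon} applies to elements of $\Gamma \leq G$ just as well. Suppose $\Gamma$ has $\monod$, witnessed by $\Lambda \leq \Gamma$ and $t \in \Gamma$, and choose $n$ with $t \in \Gamma_n$, so that $\supp(t)\subseteq I_n$. Since $\Gamma_n$ is finitely generated it is contained in some conjugate of $\Lambda$, i.e.\ ${}^{s}\Gamma_n \leq \Lambda$ for a suitable $s \in \Gamma$. Apply Lemma~\ref{lem:conjugacy:mon} to $s$ and $I_n$: if $s(I_n)\cap I_n = \emptyset$, then $t$ is supported on $I_n$ while ${}^s\Gamma_n$ is supported on $s(I_n)$, so they commute and $[{}^s\Gamma_n, {}^t({}^s\Gamma_n)] = {}^s[\Gamma_n,\Gamma_n] \neq 1$; if instead $s(I_n) = I_n$, then ${}^s\Gamma_n = \Gamma_n$ and, as $t \in \Gamma_n$, $[{}^s\Gamma_n, {}^t({}^s\Gamma_n)] = [\Gamma_n, \Gamma_n] \neq 1$. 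In either case $\Gamma_n \geq \Gamma_1 = F$ is non-abelian, so $[\Lambda, {}^t\Lambda] \neq 1$, contradicting property~(1) in Definition~\ref{def:M} with $p=1$. It is worth observing, for the reader, that this also reproves non-binateness in the style of the $F'$ argument alluded to in Example~\ref{ex:F}: the same dichotomy shows that no element of $\Gamma$ can serve as a $t$ in Definition~\ref{def:binate} for $H = \Gamma_n$ once one knows the would-be $f(\Gamma_n)$ is forced into a conjugate of $\Gamma_n$ or disjointly supported from it.

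The main obstacle is organizational rather than technical: everything hinges on isolating the short, homology-free fact that binateness of $\Gamma$ implies $\Gamma = [\Gamma,\Gamma]$, and then noticing that the wreath-product construction keeps a copy of the nontrivial abelian quotient of $F$ visible through a compatible system of homomorphisms $\Gamma_i \to \Z$. Verifying that $\sigma_{i+1}$ is a genuine homomorphism (shift-invariance of the coordinate sum) and that Lemma~\ref{lem:conjugacy:mon} is being applied to the correct index are the only calculations, and both are routine.
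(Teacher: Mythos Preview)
Your proposal is correct. The arguments for commuting $\Z$-conjugates and for the failure of $\monod$ match the paper's: both cite Lemma~\ref{lem:mon:firstpart} for the former, and both reduce the latter to the proof of Proposition~\ref{prop:diss:not:cczc} via Lemma~\ref{lem:conjugacy:mon}.

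For non-binateness, however, you take a genuinely different route. The paper argues dynamically, using Lemma~\ref{lem:support:gamma}: it picks $g \in \Gamma_1$ with full support on $I_1$, observes that $f(g)$ has support a finite disjoint union of intervals $I_2, \ldots, I_n$ all disjoint from $I_1$ (by minimality of the $\Gamma_1$-action on $I_1$ together with $[\Gamma_1, f(\Gamma_1)]=1$), and then derives a contradiction from ${}^t f(g) = g \cdot f(g)$, since the homeomorphism $t$ would have to carry a union of $n-1$ pairwise disjoint intervals onto one of $n$. Your argument instead invokes the standard algebraic fact that binate groups are perfect, and exhibits a nonzero homomorphism $\Gamma \to \Z$ assembled from the iterated wreath-product structure and the non-perfectness of $F$. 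Your approach is shorter and bypasses Lemma~\ref{lem:support:gamma} altogether; the paper's approach, on the other hand, stays within the dynamical framework of supports and would go through unchanged if the construction started from a perfect seed group such as $F'$, where your abelianisation trick would no longer apply. One small caveat: your closing aside that the $\monod$ dichotomy ``also reproves non-binateness'' is not a proof as it stands, since the binate homomorphism $f$ need not be inner and Lemma~\ref{lem:conjugacy:mon} only constrains conjugates; but this is a side remark and does not affect your main argument.
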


\begin{proof}
    We have already seen in Lemma \ref{lem:mon:firstpart} that $\Gamma$ has commuting $\Z$-conjugates. It follows from (a shorter version of) the argument of Proposition \ref{prop:diss:not:cczc} that $\Gamma$ does not have conjugates in commuting $\Z$-conjugate.

    Suppose by contradiction that $\Gamma$ is binate. Consider $\Gamma_1 < \Gamma$, which is finitely generated and supported on $I_1$, and let $g \in \Gamma_1$ be an element of full support. By assumption, there exists a homomorphism $f \colon \Gamma_1 \to \Gamma$ such that $[\Gamma_1, f(\Gamma_1)] = 1$ and $t \in \Gamma$ such that ${}^t f(g) = g \cdot f(g)$. By Lemma \ref{lem:support:gamma}, there exist finitely many disjoint intervals $I_2, \ldots, I_n$ whose union equals the support of $f(g)$. Now, $\Gamma_1$ acts minimally on $I_1$, so $f(\Gamma_1)$ fixes $I_1$ pointwise; in particular $I_1$ is disjoint from $I_k$ for $2 \leq k \leq n$. It follows that $g \cdot f(g)$ is supported on the disjoint union $I_1 \cup \cdots \cup I_n$. This gives $t(I_2 \cup \cdots \cup I_n) = I_1 \cup \cdots \cup I_n$, a contradiction.
\end{proof}

\subsection{Correctness of Figure \ref{fig:vd-binate}}
\label{ss:vd-binate}

We need to exhibit the following examples:

\begin{enumerate}
    \item A binate group that does not have commuting conjugates (Proposition \ref{prop:binate:not:cc}).
    \item A group with commuting conjugates that is not binate (Example \ref{ex:F:reloaded}).
    \item A binate group with commuting conjugates that is not dissipated (Corollary \ref{cor:hall}).
\end{enumerate}

We leave the first item for the end, and start with the other two examples, which are well-known.

\begin{example}[c.\ c.\ not binate]
\label{ex:F:reloaded}
    We have already seen in Example \ref{ex:F} that $F'$ has conjugates in commuting $\Z$-conjugate but is not binate. In particular $F'$ is an example of a group with commuting conjugates that is not binate.
\end{example}

For the next example, we go a little further:

\begin{prop}
\label{prop:hall}
    Hall's universal locally finite group has the following combination of properties:
    \begin{enumerate}
        \item It is mitotic;
        \item It has commuting $\Z/n$-conjugates for every $n$;
        \item It does not have commuting $\Z$-conjugates.
    \end{enumerate}
\end{prop}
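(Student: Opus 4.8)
The plan is to verify the three properties in turn, exploiting the defining universal property of Hall's group $\mathcal{H}$: it is the unique countable locally finite group such that every isomorphism between finite subgroups is realized by conjugation in $\mathcal{H}$, and every finite group embeds into $\mathcal{H}$.

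For item (1), I would recall that mitoticity only needs to be checked on finitely generated — hence, by local finiteness, \emph{finite} — subgroups $H \leq \mathcal{H}$. Given such an $H$, pick an abstract isomorphic copy and form $H \times H$ as a finite group; embed $H \times H$ into $\mathcal{H}$ in a way extending the given inclusion of the first factor, and let $H' \leq \mathcal{H}$ be the image of the second factor, so that $[H, H'] = 1$ and $H' \cong H$. Next, consider the finite group $K$ generated by $H$ and $H'$ inside $\mathcal{H}$ together with an extra copy; more precisely, one wants an element $t_1$ with ${}^{t_1} H = H'$ and an element $t_2$ with ${}^{t_2} h = h \cdot {}^{t_1} h$. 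The cleanest route is to work inside a concrete finite overgroup: inside the symmetric group on three disjoint copies of the $H$-set, the "diagonal shift" realizes the mitotic relations, and then the universal property of $\mathcal{H}$ lets us conjugate this finite configuration into $\mathcal{H}$. This is essentially the classical argument that locally finite universal groups are mitotic (cf. the Baumslag–Dyer–Heller style constructions); I would cite or reproduce it. Being mitotic, $\mathcal{H}$ is binate and has commuting conjugates.

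For item (2), fix $n \geq 2$ and a finite subgroup $H \leq \mathcal{H}$. I want $t \in \mathcal{H}$ with $[H, {}^{t^p} H] = 1$ for $1 \leq p < n$ and $t^n = 1$ (the latter certainly forces $[H, t^n] = 1$). Consider the finite group $W \coloneqq H \wr \Z/n = (\oplus_{\Z/n} H) \rtimes \Z/n$, which is finite since $H$ is; embed $W$ into $\mathcal{H}$ extending the inclusion of $H$ as the $0$-th coordinate factor, and let $t$ be the image of the generator of the $\Z/n$ factor. Then $t^n = 1$, and for $1 \leq p < n$ the subgroup ${}^{t^p} H$ is the copy of $H$ at coordinate $p \neq 0$, which commutes with $H$; this is exactly the mechanism already used in Lemma \ref{lem:construction:cznc}. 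Since $n$ was arbitrary, $\mathcal{H}$ has commuting $\Z/n$-conjugates for all $n$. (One could alternatively derive this, for $n=2$, from mitoticity plus Example \ref{ex:ccc:action}-type reasoning, but the wreath product argument gives all $n$ at once.)

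For item (3), this is immediate from Remark \ref{rem:torsion:grps:no:cZc}: $\mathcal{H}$ is locally finite, hence torsion, and it is non-abelian (it contains every finite non-abelian group), so it cannot have commuting $\Z$-conjugates. The only point worth spelling out is the non-abelianity, which follows from universality. The main obstacle in the whole argument is item (1): unlike (2) and (3), which are short, proving mitoticity requires constructing the element $t_2$ realizing ${}^{t_2}h = h\cdot{}^{t_1}h$, and the natural way to produce it is to exhibit the full mitotic configuration inside a concrete finite group (a symmetric group on several copies of an $H$-set) and then invoke the universal embedding property of $\mathcal{H}$; getting the bookkeeping of the three copies and the shift exactly right is the delicate part. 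Everything else is a direct application of finiteness of wreath products together with the defining property of Hall's group.
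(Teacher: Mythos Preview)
Your proposal is correct and follows essentially the same route as the paper: item~(2) is the same wreath-product embedding $H \wr \Z/n \hookrightarrow \mathcal{H}$ (the paper phrases the ``extending the inclusion of $H$'' step as embed-then-conjugate, which is exactly how your stated universal property yields the extension), and item~(3) is the same appeal to Remark~\ref{rem:torsion:grps:no:cZc}. For item~(1) the paper simply cites the literature, whereas you sketch the construction; note that your detour through a symmetric group on three $H$-sets is not actually needed --- once $H\times H$ is embedded extending $H$, homogeneity alone supplies both $t_1$ (conjugating the first factor to the second) and $t_2$ (conjugating the first factor to the diagonal), since both are isomorphisms between finite subgroups of $\mathcal{H}$.
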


We immediately deduce:

\begin{cor}[Binate and c.\ c.\ not dissipated]
\label{cor:hall}
    Hall's universal locally finite group is binate, has commuting conjugates, but is not dissipated.
\end{cor}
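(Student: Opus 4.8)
The plan is to simply read off the three claimed properties from Proposition~\ref{prop:hall} together with implications that have already been established. Write $\Gamma$ for Hall's universal locally finite group. By part~(1) of Proposition~\ref{prop:hall}, $\Gamma$ is mitotic; since mitotic groups are a special case of binate groups (every mitotic group is binate, taking the homomorphism $f$ in Definition~\ref{def:binate} to be the conjugation by $t_1$ from Definition~\ref{def:mitotic}), $\Gamma$ is binate. Likewise, as noted right after Definition~\ref{def:mitotic}, mitotic groups have commuting conjugates, so $\Gamma$ has commuting conjugates.

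It remains to argue that $\Gamma$ is not dissipated. For this I would invoke Example~\ref{ex:czc:action}: every dissipated group has commuting $\Z$-conjugates (a dissipator $t_i$ for $\Gamma$ and $X_i$ satisfies $t_i^p(X_i) \cap X_i = \emptyset$ for all $p \geq 1$, which is exactly the hypothesis of Example~\ref{ex:czc:action}). By part~(3) of Proposition~\ref{prop:hall}, $\Gamma$ does not have commuting $\Z$-conjugates, and therefore $\Gamma$ cannot be dissipated. Alternatively, one can bypass part~(3) and note directly that $\Gamma$ is locally finite, hence torsion and non-abelian, so Remark~\ref{rem:torsion:grps:no:cZc} already forbids commuting $\Z$-conjugates; either route suffices.

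Since all the work is packaged inside Proposition~\ref{prop:hall}, there is no real obstacle at this stage: the corollary is a formal consequence of that proposition and of the inclusions recorded in Section~\ref{sec:def:impl}. (The actual difficulty — verifying mitoticity, constructing the commuting $\Z/n$-conjugates, and ruling out commuting $\Z$-conjugates for Hall's group — lives entirely in the proof of Proposition~\ref{prop:hall}.)
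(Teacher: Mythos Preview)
Your proposal is correct and matches the paper's treatment: the paper gives no separate proof of the corollary, presenting it as an immediate deduction from Proposition~\ref{prop:hall} together with the inclusions established earlier (mitotic $\Rightarrow$ binate and commuting conjugates; dissipated $\Rightarrow$ commuting $\Z$-conjugates via Example~\ref{ex:czc:action}). The one minor inaccuracy is a citation: the remark that mitotic groups have commuting conjugates appears after Example~\ref{ex:cc:action}, not right after Definition~\ref{def:mitotic}.
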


\begin{proof}[Proof of Proposition \ref{prop:hall}]
    Recall that Hall's universal locally finite group $U$ is a countable locally finite group such that every finite group embeds into $U$, and such an embedding is unique up to conjugacy.\ It is easy to deduce from this universal property that $U$ is mitotic \cite{berrick}.
    
    Let $H \leq U$ be a finitely generated subgroup, which is therefore finite.\ By the first property, there exists an embedding $\iota\colon H \wr \Z/n \to U$.\ By the second property, there exists $s \in U$ such that $h = {}^s \iota(h)$ for all $h \in H$.\ Let $t_0$ denote the generator of $\Z/n$.\ Then $t \coloneqq {}^{s} \iota(t_0)$ satisfies the required properties.\ First $[H, {}^{t^p}H] = [{}^s \iota(H), {}^{{}^s \iota(t_0^p)} ({}^s \iota(H))] = {}^s \iota[H, {}^{t_0^p} H] = 1$ for all $1 \leq p < n$; and $t^n = {}^s \iota(t_0^n) = 1$, so clearly $[H, t^n] = 1$.\ This shows that $U$ has commuting $\Z/n$-conjugates for every $n$.

    On the other hand, $U$ does not have commuting $\Z$-conjugates because of being a non-abelian torsion group (Remark~\ref{rem:torsion:grps:no:cZc}).
    %Now let $H \leq U$ be a finite subgroup, and suppose that there exists $t \in U$ such that $[H, {}^{t^p} H] = 1$ for all $p \geq 1$.\ Since $U$ is locally finite, taking $p$ to be the order of $t$ we obtain $[H, H] = 1$.\ Since $U$ contains finite non-abelian subgroups, this shows that $U$ does not have commuting $\Z$-conjugates.
\end{proof}

Finally, we construct a binate group that does not have commuting conjugates. In particular, this will be an example of a binate group that does not fit into either of the main categories of binate groups, namely mitotic and dissipated groups, since both of these have commuting conjugates (for different reasons).

We use a standard construction to embed groups into binate groups, called \emph{binate towers} \cite{berrick}. For a group $\Gamma$, we let
\[b(\Gamma) \coloneqq \langle \Gamma \times \Gamma, d \mid {}^d(1, g) = (g, g) \text{ for all } g \in \Gamma \rangle.\]

We can describe $b(\Gamma)$ as an HNN-extension with base $\Gamma \times \Gamma$ and associated subgroups $\Gamma_+ \coloneqq \{ 1 \} \times \Gamma$ and $\Delta_\Gamma \coloneqq \{ (g, g)\in\Gamma\times\Gamma \mid g \in \Gamma \}$.\ We denote by $\Gamma_- \coloneqq \Gamma \times \{ 1 \}$ and note that $\Gamma_-$ embeds into $b(\Gamma)$.\ This allows to iterate this construction by setting $b^0(\Gamma)\coloneqq \Gamma$ and $b^i(\Gamma) \coloneqq b(b^{i-1}(\Gamma))$ for all $i\geq 1$, and to construct the group $\beta(\Gamma)$ as the directed union of $\{ b^i(\Gamma) \}_{i \in \N}$.\ The group $\beta(\Gamma)$ is called the \emph{universal binate tower over $\Gamma$}, and it is the basic example of a binate group.\ We will show that any such construction gives the desired example:

\begin{prop}[Binate towers do not have c.\ c.]
\label{prop:binate:not:cc}
    For every finitely generated group $\Gamma$, the binate tower $\beta(\Gamma)$ does not have commuting conjugates.
\end{prop}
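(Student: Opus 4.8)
\subsection*{Proof sketch}

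The plan is to pick a finitely generated non-abelian subgroup $H\le\beta(\Gamma)$ sitting at the bottom of the tower, and to prove that $[H,{}^tH]\neq 1$ for \emph{every} $t\in\beta(\Gamma)$, by induction on how high up the tower $t$ lives. Since $\Gamma$ is finitely generated, so is each $b^i(\Gamma)$ (it is generated by $b^{i-1}(\Gamma)\times b^{i-1}(\Gamma)$ together with the stable letter $d$). Moreover $b^N(\Gamma)$ is non-abelian for every $N\geq 2$: the group $b^{N-1}(\Gamma)$ is non-trivial (for $N-1\ge 1$ it contains the stable letter of the previous HNN extension, which has infinite order), and then the relation ${}^{d}(1,g)=(g,g)$ defining $b^{N}(\Gamma)=b(b^{N-1}(\Gamma))$ forces $d$ to be non-central. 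Fix such an $N$ and set $H\coloneqq b^{N}(\Gamma)\le\beta(\Gamma)$. Given $t\in\beta(\Gamma)$, let $M$ be least with $t\in b^M(\Gamma)$. If $M\le N$ then $t\in H$, so ${}^tH=H$ and $[H,{}^tH]=[H,H]\neq 1$ because $H$ is non-abelian; this is the base of the induction.

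Now suppose $M>N$ and write $G\coloneqq b^{M-1}(\Gamma)$, so that $b^M(\Gamma)=b(G)$ is the HNN extension of $G\times G$ with stable letter $d$ and associated subgroups $\{1\}\times G$ and $\Delta_G=\{(g,g):g\in G\}$, via the isomorphism $(1,g)\mapsto(g,g)$. Under the iterated standard embeddings, $H=b^N(\Gamma)\le G$ sits inside $b^M(\Gamma)$ as $H_0\times\{1\}\le G\times\{1\}\le G\times G$, where $H_0$ is a non-trivial (indeed non-abelian) copy of $b^N(\Gamma)$ inside $G$.

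There are two cases. If $t$ lies in the base $G\times G$, write $t=(u,v)$; then ${}^tH=({}^{u}H_0)\times\{1\}$, hence $[H,{}^tH]=[H_0,{}^{u}H_0]\times\{1\}$, a commutator computed inside $G=b^{M-1}(\Gamma)$. Since $u$ lives at level $\le M-1<M$, the inductive hypothesis, applied with $b^{M-1}(\Gamma)$, the subgroup $b^N(\Gamma)$ and the element $u$, gives $[H_0,{}^{u}H_0]\neq 1$, so $[H,{}^tH]\neq 1$. The remaining case $t\notin G\times G$ is the heart of the argument: here $t$ has reduced HNN form $c_0 d^{\epsilon_1}c_1\cdots d^{\epsilon_r}c_r$ with $c_i\in G\times G$ and $r\ge 1$. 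I would establish, using Britton's lemma, that (F1) the centraliser of $H$ in $b^M(\Gamma)$ is contained in the base $G\times G$, and (F2) $tHt^{-1}\not\subseteq G\times G$. Both rest on the same observation: for any $c\in G\times G$, conjugation by $c$ preserves the splitting $G\times G$, so $c(H_0\times\{1\})c^{-1}=({}^{a}H_0)\times\{1\}$ with ${}^{a}H_0$ non-trivial, and such a subgroup lies in neither $\{1\}\times G$ nor $\Delta_G$. Concretely, if an element $w$ of $b^M(\Gamma)$ of reduced length $\ge 1$ conjugated every element of $H$ into the base, then, writing $c_w$ for the last base syllable of the reduced form of $w$, the word obtained by conjugating $h\in H$ by $w$ has no pinch except possibly at its central syllable $c_w h c_w^{-1}$ (the other syllables are precisely those of the reduced words $w$ and $w^{-1}$, which admit no pinch); so for this word to collapse into the base one would need $c_w H c_w^{-1}$ to be contained in an associated subgroup, which is impossible. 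Applying this to $w=c$ a centraliser of $H$ yields (F1) (forcing $c$ to have reduced length $0$), and applying it to $w=t$ yields (F2). Combining (F1) and (F2), $tHt^{-1}\not\subseteq C_{b^M(\Gamma)}(H)$, so some $th_2t^{-1}$ fails to commute with some $h_1\in H$, i.e.\ $[h_1,th_2t^{-1}]\neq 1$, whence $[H,{}^tH]\neq 1$, completing the induction.

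The main obstacle is the Britton's-lemma bookkeeping behind (F1) and (F2): one must verify that in the conjugated word only the central syllable can be a pinch (which follows from the reducedness of the normal forms of $t$ and of $t^{-1}$), and that neither $\{1\}\times G$ nor $\Delta_G$ contains a $(G\times G)$-conjugate of the non-trivial subgroup $H_0\times\{1\}$ (a direct computation, since conjugation in $G\times G$ respects the two factors). Everything else is routine; finite generation of $\Gamma$ is used only to make $H=b^N(\Gamma)$ a legitimate finitely generated test subgroup and to ensure the induction bottoms out at level $N$.
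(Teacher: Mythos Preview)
Your proof is correct and follows essentially the same strategy as the paper's: reduce by induction (equivalently, minimal counterexample) on the level $M$ containing $t$, and show that the commutation hypothesis forces $t$ into the base $b^{M-1}(\Gamma)\times b^{M-1}(\Gamma)$, after which one projects to the first factor and descends. The only difference is packaging: where you use Britton's lemma to establish (F1) and (F2) directly---checking that a $(G\times G)$-conjugate of $H_0\times\{1\}$ never lands in $\{1\}\times G$ or in $\Delta_G$, so no central pinch can occur---the paper phrases the identical computation in Bass--Serre language, proving that every nontrivial element of $G\times\{1\}$ fixes the base vertex $v$ and no other, and then observing that since ${}^t\Gamma$ centralises $\Gamma$ it must preserve the fixed-point set $\{v\}$, forcing $tv=v$ and hence $t\in G\times G$. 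Your route is slightly more elementary (no trees), while the paper's is a bit more conceptual; the underlying combinatorics are the same.
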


The key claim is the following structural result about the functor $b$.

\begin{lemma}
\label{lem:bassserre}
    Let $\Gamma$ be a group, identified with its image $\Gamma_- \leq \Gamma \times \Gamma \leq b(\Gamma)$. Then every element $1 \neq h \in \Gamma_-$ fixes a unique vertex in the Bass--Serre tree of $b(\Gamma)$, which is the vertex fixed by $\Gamma \times \Gamma$.
\end{lemma}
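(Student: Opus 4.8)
The plan is to exploit the action of $b(\Gamma)$ on its Bass--Serre tree $T$ and show that the fixed-point set of $h$ collapses to a single vertex. Write $v_0$ for the vertex of $T$ whose stabiliser is the base group $\Gamma\times\Gamma$. Since $h\in\Gamma_-\le\Gamma\times\Gamma$ it fixes $v_0$, and being elliptic its fixed-point set $T^h$ is a non-empty subtree of $T$ (the fixed set of an isometry of a tree, when non-empty, is convex). It therefore suffices to prove that $h$ fixes no edge of $T$ incident to $v_0$: were $T^h$ to contain another vertex $w$, the geodesic $[v_0,w]\subseteq T^h$ would have its initial edge fixed by $h$, a contradiction, forcing $T^h=\{v_0\}$.

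The heart of the argument is the description of the edges of $T$ at $v_0$. For the HNN extension $b(\Gamma)=\langle\Gamma\times\Gamma,\,d\mid{}^d\Gamma_+=\Delta_\Gamma\rangle$, the $(\Gamma\times\Gamma)$-orbits of edges incident to $v_0$ are exactly two: one represented by the base edge, whose stabiliser is $\Gamma_+$, and one by its $d$-translate, whose stabiliser is ${}^d\Gamma_+=\Delta_\Gamma$. Consequently the stabiliser of any edge at $v_0$ is a conjugate, by an element of $\Gamma\times\Gamma$, of either $\Gamma_+$ or $\Delta_\Gamma$. Hence $h$ fixes some edge at $v_0$ if and only if $h$ is conjugate inside $\Gamma\times\Gamma$ into $\Gamma_+$ or into $\Delta_\Gamma$.

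It remains to rule this out, which is a one-line coordinate computation. Conjugation in $\Gamma\times\Gamma$ acts coordinate-wise, so every conjugate of $h=(\gamma,1)$ (with $\gamma\neq 1$) has the form $(\gamma',1)$ where $\gamma'$ is conjugate to $\gamma$ in $\Gamma$, in particular $\gamma'\neq 1$. Such an element has non-trivial first coordinate, so it is not in $\Gamma_+=\{1\}\times\Gamma$; and it has trivial second coordinate, so if it lay in $\Delta_\Gamma=\{(g,g):g\in\Gamma\}$ it would be trivial, which it is not. Thus no conjugate of $h$ in $\Gamma\times\Gamma$ lands in $\Gamma_+$ or $\Delta_\Gamma$, so $h$ fixes no edge at $v_0$, and we conclude $T^h=\{v_0\}$, the vertex fixed by $\Gamma\times\Gamma$.

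The only delicate point is the bookkeeping in the first two steps: fixing the Bass--Serre conventions so that the edges incident to $v_0$ fall into precisely the two orbits described, with edge stabilisers the $(\Gamma\times\Gamma)$-conjugates of $\Gamma_+$ (the edges "pointing in the $d^{-1}$-direction", parametrised by $(\Gamma\times\Gamma)/\Gamma_+$) and of $\Delta_\Gamma$ (the edges "pointing in the $d$-direction", parametrised by $(\Gamma\times\Gamma)/\Delta_\Gamma$). Once this standard structural fact is in place, the rest is the coordinate computation above.
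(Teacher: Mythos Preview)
Your proof is correct and follows essentially the same approach as the paper: both arguments identify the edge stabilisers at the base vertex as the $(\Gamma\times\Gamma)$-conjugates of $\Gamma_+$ and $\Delta_\Gamma$, and then verify that a non-trivial element of $\Gamma_-$ lies in none of them, so fixes no adjacent edge and hence no other vertex. Your write-up is somewhat more explicit about the convexity of the fixed-point set and the coordinate computation, but the underlying idea is identical.
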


\begin{proof}
    Let $T$ be the Bass--Serre tree for the HNN-extension expression of $b(\Gamma)$.\ It has a unique orbit of vertices and exactly two orbits of oriented edges.\ For a suitable choice of base vertex $v$, incoming edge $e$ and outgoing edge $f$, the stabilizer of $v$ is $\Gamma \times \Gamma$, the stabilizer of $e$ is $\Delta_{\Gamma}$, and the stabilizer of $f$ is $\Gamma_+$.\ Moreover, all incoming edges are permuted by $\Gamma \times \Gamma$, therefore their stabilizers are conjugates of $\Delta_\Gamma$ in $\Gamma \times \Gamma$, and similarly for  outgoing edges and $\Gamma_+$.\ It follows that a non-trivial element $h \in \Gamma_-$ does not belong to the stabilizer of any edge adjacent to $v$, and therefore fixes no other vertex of $T$.
\end{proof}

\begin{proof}[Proof of Proposition \ref{prop:binate:not:cc}]
    Let $\Gamma$ be a finitely generated group.\ Up to replacing $\Gamma$ by $b(\Gamma)$, and noticing that $\beta(b(\Gamma)) \cong \beta(\Gamma)$, we may assume that $\Gamma$ is non-abelian.\ Suppose by contradiction that $\beta(\Gamma)$ has commuting conjugates.\ Then there exists $t \in \beta(\Gamma)$ such that $[\Gamma, {}^t \Gamma] = 1$, and by construction there exists $i \geq 0$ such that $t \in b^i(\Gamma)$.\ We choose $t$ so that $i$ is minimal.\ Then $i \geq 1$, for otherwise $t \in \Gamma$ and $[\Gamma, {}^t\Gamma] = [\Gamma, \Gamma] \neq 1$.

    We see $b^i(\Gamma)$ as $b(b^{i-1}(\Gamma))$ and notice that $\Gamma$ is identified with a subgroup of $b^{i-1}(\Gamma)_- \leq b^{i-1}(\Gamma) \times b^{i-1}(\Gamma) \leq b^i(\Gamma)$.\ By Lemma \ref{lem:bassserre}, if we denote by $v$ the vertex in the Bass--Serre tree that is fixed by $b^{i-1}(\Gamma) \times b^{i-1}(\Gamma)$, then $v$ is the unique vertex fixed by $\Gamma$.\ Therefore ${}^t \Gamma$ fixes the vertex $tv$ and no other vertex. On the other hand ${}^t \Gamma$ centralizes $\Gamma$, and therefore must preserve the fixed point set of $\Gamma$.\ It follows that $tv = v$, and so $t$ belongs to the stabilizer of $v$ in $b^i(\Gamma)$, that is $b^{i-1}(\Gamma) \times b^{i-1}(\Gamma)$, which allows to write $t = t_- t_+$ as a product of its projections onto the two factors.\ Now the conjugation by $t_+$ on $\Gamma \leq b^{i-1}(\Gamma)_-$ has no effect, and therefore $t_-$ satisfies $[\Gamma, {}^{t_-} \Gamma] = 1$.\ But $t_- \in b^{i-1}(\Gamma)_-$, which is the standard copy of $b^{i-1}(\Gamma)$ in $\beta(\Gamma)$.\ This contradicts the minimality of $i$ and concludes the proof.
\end{proof}

\subsection{Mitotic groups}
\label{ss:mitotic}

Mitotic groups form an important class of binate groups; However they are fundamentally different from the examples of boundedly supported groups with displacement properties that we have been mostly interested in. For example, one can show that if $\Gamma$ is a boundedly supported group acting on a space $X$ which exhibits commuting conjugates as in Example \ref{ex:cc:action}, and $\Gamma$ is moreover mitotic, then the support of every finitely generated subgroup of $\Gamma$ can be written as the disjoint union of an arbitrarily large number of open sets. However, we are unable to show that this situation cannot occur. In particular, the following problem is an obstacle for fitting mitotic groups into Figure \ref{fig:vd-binate}:

\begin{probl}[Dissipated and mitotic group]
\label{probl:mitotic:dissipated}
    Does there exist a dissipated group that is also mitotic?
\end{probl}

As for the interactions with other properties, it is clear that mitotic groups have commuting conjugates, and Hall's universal locally finite group witnesses that mitotic groups need not have commuting $\Z$-conjugates (Proposition \ref{prop:hall}). However, the following remains open:

\begin{probl}[Mitotic implies c.\ c.\ c.]
\label{probl:mitotic:ccc}
    Let $\Gamma$ be a mitotic group.\ Does $\Gamma$ have commuting cyclic conjugates?
\end{probl}

We propose a natural candidate for a counterexample to Problem \ref{probl:mitotic:ccc}.\
Following Baumslag--Dyer--Heller \cite[Section 5]{BDH}, for a group $\Gamma$, we denote by $m(\Gamma)$ its \emph{standard mitosis}, namely
\[m(\Gamma) \coloneqq \langle \Gamma \times \Gamma, s, d \mid {}^s (g, 1) = (1, g), {}^d (1, g) = (g, g) \rangle. \]
We can describe $m(\Gamma)$ as the fundamental group of a graph of groups whose underlying graph is a wedge of two circles, with unique vertex group $\Gamma \times \Gamma$ and with both edge groups $\Gamma$.\ Now $\Gamma$ embeds into $m(\Gamma)$ by first embedding into the first factor of $\Gamma \times \Gamma$.\ We can thus iterate this construction by setting $m^i(\Gamma) = m(m^{i-1}(\Gamma))$, and we denote by $\mu(\Gamma)$ the directed union of $\{ m^i(\Gamma) \}_{i \in \N}$, which is called the \emph{standard mitotic embedding} of $\Gamma$.\ Then $\mu(\Gamma)$ is mitotic \cite[Lemma 5.4]{BDH}, and thus binate.\
This construction can be seen as the mitotic analogue of the binate tower construction that we used for Proposition \ref{prop:binate:not:cc}.

\begin{probl}[Mitotic tower of free group has c.\ c.\ c.]
    Let $F_k$ denote a non-abelian free group.\ Does $\mu(F_k)$ have commuting cyclic conjugates?
\end{probl}

We suspect that it does not.\ At least, it is easy to show that $\mu(F_k)$ does not have commuting $\Z$-conjugates:

\begin{proof}
    If $\mu(F_k)$ has commuting $\Z$-conjugates, then there exists $t \in \mu(F_k)$ such that $[F_k, {}^{t^p} F_k] = 1$ for all $p \geq 1$.\ Since $F_k$ has trivial center, it follows that $\langle F_k, t \rangle$ contains a copy of $\oplus_{\N} F_k$, which has infinite cohomological dimension.\ Hence, if $i \in \N$ is such that $t \in m^i(F_k)$, it follows that $m^i(F_k)$ has infinite cohomological dimension as well.

    However it is easy to see that $cd(m(\Gamma)) = 2 cd(\Gamma)$ for every group $\Gamma$ \cite[Proposition 6.12]{bieri}. Therefore $cd(m^i(F_k)) = 2^i$, which contradicts the previous paragraph.
\end{proof}

Despite the similarity of the problem, adapting the argument from Proposition \ref{prop:binate:not:cc} to show that mitotic towers do not have commuting cyclic conjugates presents a challenge.\ Namely, a crucial step in the proof that $\beta(\Gamma)$ does not have commuting conjugates is the fact that the image $b^{i-1}(\Gamma) \to b^i(\Gamma)$ intersects trivially all edge stabilizers in the corresponding Bass--Serre tree.\ This is however not the case with $m^{i-1}(\Gamma) \to m^i(\Gamma)$, where the image is identified with an edge stabilizer, and in fact can be shown to fix infinitely many edges.

\section{Problems}\label{sec:problems}

We conclude this note with a list of intriguing problems about bounded acyclicity (see Section \ref{ss:mitotic} above for more problems).\ The first problem focuses on binate groups, that we know to be boundedly acyclic (Theorem~\ref{thm:binate:bac}), but we do not know whether they are also boundedly acyclic for all separable coefficients:

\begin{probl}[Boundedly not $\Xsep$-boundedly acyclic groups]
\label{probl:binate}
    Are binate groups $\Xsep$-boundedly acyclic? More generally, are there groups that are boundedly acyclic for trivial real coefficients, but that are not $\Xsep$-boundedly acyclic?\ If yes, how can we detect the non-vanishing of the bounded cohomology for certain separable coefficients?
\end{probl}

Due to Theorem \ref{thm:ccc:bac}, a natural candidate of a binate group that is not $\Xsep$-boundedly acyclic is the binate tower construction, since such groups do not have commuting cyclic conjugates:\ In fact they do not even have commuting conjugates (Proposition \ref{prop:binate:not:cc}).

\begin{probl}[$\Xsep$-bounded acyclicity of binate towers]
    Are universal binate towers $\beta(\Gamma)$ always $\Xsep$-boundedly acyclic?
\end{probl}

Another possible approach for constructing a boundedly acyclic group which is not $\Xsep$-boundedly acyclic goes via commensurability.\

\begin{lemma}
\label{lem:commensurability}
    If $\Lambda < \Gamma$ is a finite index subgroup, then $\Lambda$ is $\Xsep$-boundedly acyclic if and only if $\Gamma$ is $\Xsep$-boundedly acyclic.
\end{lemma}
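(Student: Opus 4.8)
The plan is to use the standard transfer/corestriction machinery for bounded cohomology together with the coefficient-extension behavior of induced modules. First I would recall that for a finite index subgroup $\Lambda < \Gamma$ there is a well-behaved transfer map $\mathrm{transf}^\bullet \colon H^\bullet_b(\Lambda; E|_\Lambda) \to H^\bullet_b(\Gamma; E)$ for any dual Banach $\Gamma$-module $E$, obtained by averaging over the finitely many cosets, and that composing corestriction with transfer gives multiplication by the index $[\Gamma : \Lambda]$ on $H^\bullet_b(\Gamma; E)$. Since we are working over $\R$-vector spaces, this immediately yields that $H^\bullet_b(\Gamma; E)$ injects into $H^\bullet_b(\Lambda; E|_\Lambda)$; hence if $\Lambda$ is $\Xsep$-boundedly acyclic then $H^k_b(\Gamma; E) = 0$ in all positive degrees for every separable dual $\Gamma$-module $E$, because $E|_\Lambda$ is still a separable dual $\Lambda$-module. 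That takes care of one implication.

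For the converse — deducing $\Xsep$-bounded acyclicity of $\Lambda$ from that of $\Gamma$ — the plan is to pass through coinduced (equivalently, for finite index, induced) modules. Given a separable dual Banach $\Lambda$-module $F$, form the coinduced module $\mathrm{Coind}_\Lambda^\Gamma F = \ell^\infty(\Gamma, F)^\Lambda$, which for finite index $\Lambda$ is a finite direct sum of copies of $F$ indexed by coset representatives; in particular it is again separable, and one checks it is again a dual Banach $\Gamma$-module (it is the dual of $\ell^1$-induction of the predual of $F$). The key tool is a bounded-cohomology version of Shapiro's lemma: $H^\bullet_b(\Gamma; \mathrm{Coind}_\Lambda^\Gamma F) \cong H^\bullet_b(\Lambda; F)$. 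Applying this with $\Gamma$ $\Xsep$-boundedly acyclic gives $H^k_b(\Lambda; F) \cong H^k_b(\Gamma; \mathrm{Coind}_\Lambda^\Gamma F) = 0$ for all $k > 0$, which is exactly what we want. So the overall structure is: transfer argument for one direction, Shapiro's lemma plus separability-of-coinduction for the other.

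The main obstacle I expect is purely bookkeeping about the coefficient modules: one must verify carefully that $\mathrm{Coind}_\Lambda^\Gamma F$ is a \emph{dual} Banach module (not merely a Banach module) and that it is separable, so that it is a legitimate test module in the definition of $\Xsep$-bounded acyclicity. For finite index this is manageable — $\mathrm{Coind}_\Lambda^\Gamma F \cong \bigoplus_{g\Lambda \in \Gamma/\Lambda} F$ as a Banach space with a permuted-and-twisted $\Gamma$-action, so it is separable since $F$ is and a finite sum of duals is a dual — but the identification of its predual and the compatibility of the contragredient action require care. A secondary point is citing or sketching the bounded Shapiro lemma at the level of generality needed (arbitrary dual coefficients, finite index); this is standard (see e.g. \cite{monod}) but should be invoked explicitly. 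Once those module-theoretic points are settled, both implications are short.

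\begin{proof}
Suppose first that $\Gamma$ is $\Xsep$-boundedly acyclic, and let $F$ be a separable dual Banach $\Lambda$-module. Since $[\Gamma : \Lambda] =: d < \infty$, the coinduced module $\mathrm{Coind}_\Lambda^\Gamma F$ is isomorphic, as a Banach space, to a direct sum of $d$ copies of $F$, hence is separable; moreover it is a dual Banach $\Gamma$-module, being the dual of the $\ell^1$-induction of the predual of $F$. By Shapiro's Lemma in bounded cohomology, $H^k_b(\Lambda; F) \cong H^k_b(\Gamma; \mathrm{Coind}_\Lambda^\Gamma F)$ for all $k$. The right-hand side vanishes for $k > 0$ by hypothesis, so $\Lambda$ is $\Xsep$-boundedly acyclic.

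Conversely, suppose that $\Lambda$ is $\Xsep$-boundedly acyclic, and let $E$ be a separable dual Banach $\Gamma$-module. The restricted module $E|_\Lambda$ is again a separable dual Banach $\Lambda$-module, so $H^k_b(\Lambda; E|_\Lambda) = 0$ for all $k > 0$. The composition of the corestriction map $H^\bullet_b(\Gamma; E) \to H^\bullet_b(\Lambda; E|_\Lambda)$ with the transfer map $H^\bullet_b(\Lambda; E|_\Lambda) \to H^\bullet_b(\Gamma; E)$, obtained by averaging over coset representatives, equals multiplication by $d$ on $H^\bullet_b(\Gamma; E)$. Since these are $\R$-vector spaces, corestriction is injective; hence $H^k_b(\Gamma; E) = 0$ for all $k > 0$, and $\Gamma$ is $\Xsep$-boundedly acyclic.
\end{proof}
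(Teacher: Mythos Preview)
Your proposal is correct and follows essentially the same approach as the paper: injectivity of restriction (via the transfer) for one direction, and the bounded Eckmann--Shapiro lemma together with the observation that for finite index the (co)induced module is a finite direct sum of copies of $F$, hence separable and dual, for the other. The paper cites \cite[Proposition 8.6.2]{monod} for the injectivity and \cite[Proposition 10.1.3(i)]{monod} for Shapiro, then explicitly writes down the isomorphism $L^\infty(\Gamma,E)^\Lambda \cong E^{[\Gamma:\Lambda]}$ via coset representatives, just as you do.
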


\begin{proof}
    For every Banach $\Gamma$-module $E$, the restriction $H^n_b(\Gamma; E) \to H^n_b(\Lambda; E)$ is injective \cite[Proposition 8.6.2]{monod}.\ Therefore if $\Lambda$ is $\Xsep$-boundedly acyclic, then so is $\Gamma$.\

    Conversely, let $E$ be a dual separable $\Lambda$-module. Eckmann--Shapiro induction \cite[Proposition 10.1.3 (i)]{monod} yields a dual $\Gamma$-module $L^\infty(\Gamma, E)^{\Lambda}$ such that $H^n_b(\Lambda; E)\cong H^n_b(\Gamma; L^\infty(\Gamma, E)^{\Lambda})$ for all $n\geq 0$.\ 
    Let $R$ be a set of right coset representatives, so that $\Gamma = \Lambda R$. Then for all $\lambda \in \Lambda, r \in R, f \in L^\infty(\Gamma, E)^{\Lambda}$ it holds $f(\lambda r) = \lambda f(r)$~\cite[Notation 2.4.7]{monod}. So the map
    \[L^\infty(\Gamma, E)^{\Lambda} \to E^{[\Gamma : \Lambda]} \colon f \mapsto (f(r))_{r \in R}\]
    is an isomorphism of Banach spaces. Since $E$ is separable, this shows that $L^\infty(\Gamma, E)^{\Lambda}$ is separable. So if $\Gamma$ is $\Xsep$-boundedly acyclic, then 
    \[0=H^n_b(\Gamma; L^\infty(\Gamma, E)^\Lambda)\cong H^n_b(\Lambda; E)\]
    for all $n \geq 1$, which shows that $\Lambda$ is $\Xsep$-boundedly acyclic.
\end{proof}

The first direction also works with trivial coefficients, since the module does not change along the way: If $\Lambda$ is boundedly acyclic, then $\Gamma$ is boundedly acyclic. However, the other direction really needs the change of coefficients, which can be witnessed by explicit examples in degree $2$~\cite[Remark 2.4]{ccc}. Therefore, if $\Gamma$ is a boundedly acyclic group containing a finite index subgroup that is not boundedly acyclic, then $\Gamma$ is not $\Xsep$-boundedly acyclic.

\begin{probl}[Commensurability invariance of bounded acyclicity]
    Is bounded acyclicity invariant under commensurability?
\end{probl}

Another challenging problem is to understand whether the weakest notion of commuting conjugates is already enough to ensure bounded acyclicity.\ 
More precisely, it would be interesting to investigate the following:

\begin{probl}[c.\ c.\ versus bounded acyclicity]
    Let $\Gamma$ be a group with commuting conjugates.\ Is $\Gamma$ a boundedly acyclic group, or even an $\Xsep$-boundedly acyclic group? If the answer is negative, how can we show non-vanishing of the higher bounded cohomology groups, or of second bounded cohomology with non-trivial coefficients?
\end{probl}

Note that our examples of groups with commuting conjugates but without commuting cyclic conjugates are obtained as products of $\Xsep$-boundedly acyclic groups (Proposition \ref{prop:cc:not:ccc}); therefore they are also $\Xsep$-boundedly acyclic, as can be deduced from a spectral sequence argument with appropriate coefficients \cite[Proposition 12.2.2]{monod} (or variations of it \cite[Corollary 4.2.1]{moraschini_raptis_2023}).

\subsection*{Declarations} There is no conflict of interest to declare. There is no relevant data related to the paper.

\bibliographystyle{amsalpha}
\bibliography{svbib}

\end{document}